\documentclass[12pt, reqno]{amsart}
\usepackage{mathtools, amscd, amsfonts, amssymb}
\usepackage{bm}
\usepackage{graphicx}
\usepackage[shortlabels]{enumitem}
\usepackage{mathrsfs}
\usepackage{cite}
\usepackage{lineno}
\usepackage[
colorlinks=true,
linkcolor=blue,
citecolor=blue,
urlcolor=blue,
pagebackref=true
]{hyperref}
\newtheorem{theorem}{Theorem}[section]
\newtheorem{lemma}[theorem]{Lemma}
\newtheorem{proposition}[theorem]{Proposition}
\newtheorem{corollary}[theorem]{Corollary}
\theoremstyle{definition}
\newtheorem{definition}[theorem]{Definition}
\newtheorem{example}[theorem]{Example}

\theoremstyle{remark}
\newtheorem{remark}[theorem]{Remark}
\numberwithin{equation}{section}

\numberwithin{equation}{section}

\newcommand{\mb}{\mathbb}

\newcommand{\mc}{\mathcal}

\DeclareMathOperator{\im}{range}

\newcommand{\TFAE}{The following conditions are equivalent:}

\newcommand{\ov}{\overline}
\newcommand{\sub}{\subseteq}

\newcommand{\wh}{\widehat}
\newcommand{\wt}{\widetilde}

\DeclareMathOperator{\diag}{diag}

\begin{document}
	
	\title[$k$-Regular Factorizations and Invariant Subspaces of C.N.U.  Contractions]{$k$-Regular Factorizations and Invariant Subspaces of Completely Non-Unitary Contractions}
	
	\author[Kalpesh J. Haria]{Kalpesh J. Haria}
	\address{School of Mathematics and Computer Science, Indian Institute of Technology Goa, Goa 403401, India}
	\email{kalpesh@iitgoa.ac.in, hikalpesh.haria@gmail.com}
	
	\author[Aashish Kumar Maurya]{Aashish Kumar Maurya}
	\address{School of Mathematics and Computer Science, Indian Institute of Technology Goa, Goa 403401, India}
	\email{aashish21232101@iitgoa.ac.in, a.k.maurya.math@gmail.com}

	\subjclass[2020]{Primary 47A15, 47A45, 47A68; Secondary 47A20, 47A56}
	\keywords{Invariant subspaces, And\^o dilations, commuting isometric dilations, commuting contractions, von Neumann inequalities, characteristic functions, regular factorizations, functional models, upper triangular block operator matrices, defect spaces and defect operators}

	\begin{abstract}
		We introduce the notion of $k$-regular factorizations for contractions into $k$ factors, generalizing the classical notion of regular factorization due to Sz.-Nagy and Foia\c{s}, and develop a systematic framework for their analysis. Using this concept, a one-to-one correspondence is established between chains of invariant subspaces
		\[
		\mathcal{M}_1 \sub \cdots \sub \mathcal{M}_{k-1},
		\]
		associated with a completely non-unitary contraction and the class of all $k$-regular factorizations of its characteristic function. An explicit functional model for the corresponding completely non-unitary contraction is constructed, and the associated functional model representations of the chain of invariant subspaces are obtained. Finally, examples illustrating the applicability of these results are provided. Furthermore, we introduce symmetric $k$-regular tuples for commuting $k$-contractions, proving this property holds when the product of contractions has a finite-dimensional defect space and is $k$-regular under at least one permutation. Importantly, we demonstrate that the classical counterexamples for commuting $3$-tuples provided by Parrott, Crabb--Davie, and Kaijser--Varopoulos fail to be symmetric $3$-regular tuples. This structural failure highlights the significance of symmetric $k$-regularity and offers a promising framework that encourages further research into this property and the commutative dilation theory of commuting $k$-contractions.
	\end{abstract}
	
	\maketitle
	
	\tableofcontents

	\section{Introduction }
	In this article, all Hilbert spaces are assumed to be separable and defined over the field of complex numbers. Let $\mc{H}$ be a Hilbert space, and let $B(\mc{H})$ denote the algebra of bounded linear operators acting on $\mc{H}$. Consider an operator \(T \in B(\mc{H})\). A closed subspace \(\mc{M} \sub \mc{H}\) is called an invariant subspace for \(T\) if \(T(\mc{M}) \sub \mc{M}\). Furthermore, $\mc M$ is called a reducing subspace for $T$ if it is invariant under both $T$ and its adjoint $T^*$. An invariant subspace $\mc M$ of a contraction is called \emph{hyperinvariant} if it is invariant under every operator that commutes with \(T\). Within operator theory, one of the most enduring open problems is the invariant subspace problem, which asks whether every bounded linear operator on a separable infinite-dimensional Hilbert space possesses a non-trivial closed invariant subspace.
	
	An operator $T \in B(\mc H)$ is defined as a contraction if $\|T\|\leq 1$. Furthermore, $T \in B(\mc H)$ is called unitary if $T^*T=I_{\mc H}=TT^*$. A contraction $T$ is said to be a completely non-unitary (c.n.u.) contraction if there exists no non-zero reducing subspace on which $T$ acts as a unitary operator. It is easy to observe that the invariant subspace problem for general bounded linear operators is equivalent to the problem posed for contractions alone. According to the canonical decomposition of contractions, every contraction can be uniquely decomposed as the orthogonal direct sum of a unitary operator and a c.n.u. contraction. The spectral theorem for normal operators guarantees that if $\dim \mc H>1$, every unitary operator possesses a non-trivial invariant subspace. Consequently, the general invariant subspace problem effectively reduces to the study of c.n.u. contractions.
	
	For a contraction $T \in B(\mc H)$, Sz.-Nagy and Foia\c{s} defined the characteristic function $\Theta_T:\mb{D}\to B(\mc{D}_T, \mc{D}_{T^*})$ by
	\[\Theta_T(z) \coloneqq (-T+zD_{T^*}(I-zT^*)^{-1}D_T)|_{\mc{D}_T}  ~\text{ for all } z\in\mb{D},\]
	where $D_T=(I-T^*T)^{1/2}$ and $D_{T^*}=(I-TT^*)^{1/2}$ are the defect operators, with the associated defect spaces $\mc{D}_T \coloneqq \overline{\im(D_T)}$ and $\mc{D}_{T^*} \coloneqq \overline{\im(D_{T^*})}.$
	The characteristic function $\Theta_T$ is a purely contractive operator-valued analytic function. Moreover, it serves as a complete unitary invariant for c.n.u.\ contractions (see Chapter VI of \cite{NFBK10}), where a corresponding functional model for c.n.u.\ contractions is also constructed.  Extending these concepts to the multivariable setting, the theory of characteristic functions and functional models for noncommuting row contractions was developed in the foundational works of A.~E.~Frazho \cite{Fr82a} and G.~Popescu \cite{Po95a,Po89a,Po89b}. In this context, G.~Popescu demonstrated that the multivariable characteristic function serves as a complete unitary invariant for completely non-coisometric (c.n.c.) row contractions. The corresponding results for commuting c.n.c.\ row contractions were established by T.~Bhattacharyya, J.~Eschmeier, and J.~Sarkar in \cite{Bh05a,Bh06a}.

	In Chapter VII of \cite{NFBK10}, Sz.-Nagy and Foia\c{s} established a one-to-one correspondence between the invariant subspaces of a c.n.u. contraction and the regular factorizations of its characteristic function; see also \cite{NF64a,NF64b}. In \cite{Po06}, G.~Popescu established a one-to-one correspondence between the joint invariant subspaces of a c.n.c.\ row contraction and the regular factorizations of its characteristic function. For a pair of commuting contractions, regular factorizations of purely contractive analytic function in the characteristic triple and the associated joint invariant subspaces were studied in \cite{Ba23a}.  Thus, the theory of regular factorization provides a concrete analytical framework for investigating the structure of invariant subspaces.

	Regular factorizations have been studied widely in the literature; some notable contributions are listed below. Using the theory of regular factorization, several properties of invariant and hyperinvariant subspaces have been established for operators in the class $C_{11}$ (see Chapter VI of \cite{NFBK10}). Hyperinvariant subspaces of weak contractions were studied in \cite{Wu79b}; see also \cite{Wu78a,Wu79c,Wu78b}. For a c.n.u.\ contraction \(T \in B(\mc H)\), R.~I.~Teodorescu \cite{Te76a} proved, using regular factorization techniques, that \(T\) admits an upper triangular representation
	\[
	T = \begin{bmatrix} A & * \\ 0 & B \end{bmatrix},
	\]
	where \(A\) is a unilateral shift and \(B\) is the adjoint of a unilateral shift, if and only if the characteristic function \(\Theta_T\) is constant. In \cite{Te75a,Te77a}, using the regular factorization of \(\Theta_T\) associated with a non-trivial invariant subspace \(\mc H_1\) of $T$, necessary and sufficient conditions were established for the existence of a closed subspace \(\mc H' \sub \mc H\), invariant under \(T\), such that
	\[
	\mc H = \mc H' + \mc H_1
	\qquad
	\text{(not necessarily an orthogonal direct sum)}.
	\]
	Furthermore, necessary and sufficient conditions for the uniqueness of such a complementary subspace \(\mc H'\) were obtained in \cite{Te79a}. When the invariant subspace \(\mc H_1\) is hyperinvariant, a necessary and sufficient condition was established in \cite{Te78b}; for related results, see \cite{Te80a}. Building upon the results of R.~I.~Teodorescu in \cite{Te75a}, P.~Y.~Wu obtained conditions for a c.n.u.\ contraction to be spectral in\cite{Wu79a}.   Finally, in \cite{Tim20}, D.~Timotin used the concept of regular factorization to describe all invariant subspaces of the operator \(S \oplus S^*\), where \(S\) denotes the unilateral shift on the Hardy space \(H^2(\mb D)\).
	
	D. K. Khan introduced the notions of $(+)$-regular and $(-)$-regular factorizations for
	contractive operator-valued analytic functions and employed these concepts to establish
	criteria for minimality, controllability, and observability in the cascade coupling of
	infinite-dimensional linear dynamical systems, specifically within the context of conservative
	and passive scattering systems (see \cite{Do94}). Subsequently, in \cite{Khan90a,Khan90b}, D. K. Khan generalized the concepts of $(+)$-regularity and $(-)$-regularity
	to the setting where the contractive analytic function is a product of $k$ contractive
	factors. Furthermore, utilizing the Sz.-Nagy--Foia\c{s} functional model, he derived
	necessary and sufficient conditions for $(+)$-regularity and demonstrated that the
	cascade coupling of $k$ minimal passive scattering systems is minimal if the underlying
	factorization is simultaneously $(+)$-regular and $(-)$-regular.

	In Section~2, for $k \geq 2$, motivated by the definition of $(+)$-regular factorization for contractive analytic functions with $k$ factors and the classical definition of regular factorization by Sz.-Nagy and Foia\c{s}, we introduce the concept of $k$-regular factorization for products of $k$ contractions. Observe that the classical definition of regular factorization
	given by Sz.-Nagy and Foia\c{s} in \cite{NFBK10} coincides with the case of
	$2$-regular factorization. Sz.-Nagy and Foia\c{s} \cite{NF74} established equivalent conditions characterizing regular factorizations of the product of two contractions. The present work extends these equivalent conditions to the more general setting of $k$-regular factorizations.
	
	In Section 3, we apply this framework to contractive analytic functions and establish the following correspondence: if
	\(\mc M_1 \sub \cdots \sub \mc M_{k-1}\) is a chain of invariant subspaces for a c.n.u.\ contraction $T$, then its characteristic function $\Theta_T$ admits a $k$-regular factorization; conversely, if $\Theta_T$ admits a $k$-regular factorization, then there exist invariant subspaces $\mc M_1, \dots, \mc M_{k-1}$ for $T$ satisfying
	\(\mc M_1 \sub \cdots \sub \mc M_{k-1}\).

	Moreover, given a $k$-regular factorization of a purely contractive analytic function, we construct a functional model whose model operator is unitarily equivalent to a c.n.u.\ contraction whose characteristic function coincides with the given function. Furthermore, the corresponding functional model representations of the associated invariant subspaces are obtained. Finally, the induced chain of invariant subspaces associated with the $k$-regular factorization yields a natural block upper triangular matrix representation of the c.n.u.\ contraction, where the characteristic function of each diagonal block coincides with the purely contractive part of the corresponding factor $\Theta_i$.

	A multivariable analogue of the results obtained in the present article is developed for c.n.c. row contractions in \cite{HM2026II}. In Proposition 2.4 of the monograph \cite{NF70}, Sz.-Nagy and Foia\c{s} proved that if $\mc M$ and $\mc M'$ are invariant subspaces of a c.n.u.\ contraction $T$ corresponding to the regular factorizations $\Theta_T=\Theta_2\Theta_1$ and $\Theta_T=\Theta_2'\Theta_1'$, respectively, then the inclusion $\mc M \sub \mc M'$ implies that $\Theta_1$ is a divisor of $\Theta_1'$. Subsequently, L.~K\'erchy \cite[Theorem 7]{Kerchy03} strengthened this result by proving that $\Theta_1$ is, in fact, a regular divisor of $\Theta_1'$. An analogous result for c.n.c.\ row contractions was established by G.~Popescu \cite[Theorem 3.8]{Po06}, who proved that if $\mc M$ and $\mc M'$ are joint invariant subspaces of a c.n.c.\ row contraction $T$ corresponding to the regular factorizations $\Theta_T=\Theta_2\Theta_1$ and $\Theta_T=\Theta_2'\Theta_1'$, respectively, then the inclusion $\mc M \sub \mc M'$ implies that $\Theta_1$ is a divisor of $\Theta_1'$. In \cite{HM2026II}, we further prove that $\Theta_1$ is indeed a regular divisor of $\Theta_1'$, thereby extending K\'erchy's result to the setting of c.n.c.\ row contractions, and also establish the converse implication. In Section~4, several examples are presented to illustrate the applicability of these results.

	In multivariable operator theory, the dilation theory of commuting $k$-tuples of contractions remains a complex challenge for $k \ge 3$. It is a fundamental result that any tuple admitting a commuting isometric dilation necessarily satisfies the von Neumann inequality; equivalently, $\mathcal{CID}_k(\mathcal H) \sub \mathcal{VNI}_k(\mathcal H)$. For $k \ge 3$, however, this inclusion is strictly proper. S. Parrott's classical counterexample establishes that satisfying the von Neumann inequality is insufficient to guarantee a commuting isometric dilation. Furthermore, constructions by S.~Kaijser and N.~Th.~Varopoulos, as well as M. J. Crabb and A. M. Davie, demonstrate that commuting contractions can fail the von Neumann inequality entirely.
	
	In Section 5, we introduce the concept of a symmetric $k$-regular tuple for commuting contractions and investigate the associated class, denoted by $\mathcal{SR}_k(\mathcal H)$. A critical pattern emerges upon examining the classical pathological commuting $3$-tuples that fail to admit commuting isometric dilations---namely, the counterexamples developed by S.~Kaijser and N.~Th.~Varopoulos,  M. J. Crabb and A. M. Davie, and S. Parrott, (see \cite{CD75a,Pa70a,Va74a}). It is observed that none of these examples forms a symmetric $3$-regular tuple. This naturally leads to the question of whether every symmetric $k$-regular tuple of commuting contractions admits a commuting isometric dilation and, if such a dilation exists, to explore the conditions guaranteeing its uniqueness up to unitary equivalence.  For $k=2$, J.~A.~Ball and H.~Sau \cite{Ba23a} proved that if the commuting pair $(T_1,T_2)$ is symmetric $2$-regular, then it admits a strongly minimal And\^o isometric lift $(V_1,V_2)$, and that all minimal And\^o isometric lifts are strongly minimal and mutually unitarily equivalent as lifts; see also \cite{Sa18a,BS20a}.

	\section{Structure of $k$-Regular Factorizations for Contraction Operators}

	We begin by defining the notion of a $k$-regular factorization for contractions, where $k \geq 2$ is an integer, and  let \( \{ \mc{H}_i \}_{i=1}^{k+1} \) denote a sequence of Hilbert spaces. Suppose that $A \in \mc{B}(\mc H_1, \mc H_{k+1})$ is a contraction that admits a factorization into a product of contractions
	\begin{equation}\label{k-factorization}
		A = A_k \cdots A_1,
	\end{equation}
	where $A_i \in \mc{B}(\mc H_i, \mc H_{i+1})$ for each $i = 1, \dots, k$. Associated with this factorization, we define a mapping $Z_k$ from the dense subspace $D_A \mc H_1 \sub \overline{D_A \mc H_1}$ into the direct sum of defect spaces $\bigoplus_{i=k}^1 \overline{D_{A_i} \mc H_i}$ by
	\begin{equation}\label{Z_k}
		Z_k (D_A h) \coloneqq D_{A_k} A_{k-1} \cdots A_1 h \oplus D_{A_{k-1}} A_{k-2} \cdots A_1 h \oplus \cdots \oplus D_{A_1} h.
	\end{equation}
	
	For any $h \in \mc H_1$, the following computation demonstrates that $Z_k$ is an isometry:
	\begin{align*}
		\| Z_k D_A h \|^2 &= \| D_{A_k} A_{k-1} \cdots A_1 h \|^2 + \| D_{A_{k-1}} A_{k-2} \cdots A_1 h \|^2 + \cdots + \| D_{A_1} h \|^2 \\
		&= \left( \| A_{k-1} \cdots A_1 h \|^2 - \| A_k A_{k-1} \cdots A_1 h \|^2 \right) + \cdots + \left( \| h \|^2 - \| A_1 h \|^2 \right) \\
		&= \| h \|^2 - \| A h \|^2 \\
		&= \| D_A h \|^2.
	\end{align*}
	
	Since $Z_k$ preserves the norm on a dense subspace, it extends uniquely to a well-defined isometry from $\overline{D_A \mc H_1}$ into $\bigoplus_{i=k}^1 \overline{D_{A_i} \mc H_i}$.
	
	\begin{definition}\label{def_Z_k}
		The factorization $A = A_k \cdots A_1$ is called a \textit{$k$-regular factorization} if the associated isometry $Z_k$ defined in \eqref{Z_k} is a unitary operator.
	\end{definition}
	
	Consider the factorization \( A = A_k \cdots A_1 \) given in equation \eqref{k-factorization}. We partition the index set \( \{1, \dots, k\} \) into \( r \) disjoint subsets \( J_1, \dots, J_r \), defined as follows
	\[
	J_1 = \{ j_1, \ldots, 1 \}, ~ \dots, ~ J_i = \{ j_i, \ldots, j_{i-1} + 1 \}, ~ \dots, ~ J_r = \{ j_r, \ldots, j_{r-1} + 1 \},
	\]
	where \( 1 \leq j_1 < j_2 < \ldots < j_r = k \). Let \( A_{J_i} \) denote the product of the operators indexed by \( J_i \). That is,
	\[
	A_{J_1} \coloneqq A_{j_1} \cdots A_1, ~ \dots, ~ A_{J_i} \coloneqq A_{j_i} \cdots A_{j_{i-1} + 1}, ~ \dots, ~ A_{J_r} \coloneqq A_k \cdots A_{j_{r-1} + 1}.
	\]
	Since the operators \( A_j \) are assumed to be contractions, it follows that each operator \( A_{J_i} \) (for \( i = 1, \dots, r \)) is also a contraction. According to the Definition \ref{def_Z_k}, the aggregated factorization \( A = A_{J_r} \cdots A_{J_1} \) is referred to as an \( r \)-regular factorization if the isometry associated with this partitioned factorization, denoted by \( Z_r^{J_r, \dots, J_1} : \mathcal{D}_A \to \mathcal{D}_{A_{J_r}} \oplus \cdots \oplus \mathcal{D}_{A_{J_1}} \), which is defined by
	\[
	Z_r^{J_r, \dots, J_1}(D_A h) \coloneqq D_{A_{J_r}} A_{J_{r-1}} \cdots A_{J_1} h \oplus \cdots \oplus D_{A_{J_1}} h
	\]
	is a unitary operator. For notational convenience, when the partition consists entirely of singletons (i.e., $J_i = \{i\}$ for all $i = 1, \dots, k$), the associated isometry is denoted simply by $Z_k$ rather than $Z_k^{\{k\}, \{k-1\}, \dots, \{1\}}$. Furthermore, the sub-factorization $A_{J_i} = A_{j_i} \cdots A_{j_{i-1} + 1}$ is $|J_i|$-regular if and only if the associated isometry
	\[
	Z_{|J_i|}^{\{j_i\}, \dots, \{j_{i-1}+1\}}: \mc{D}_{A_{J_i}} \longrightarrow \mc{D}_{A_{j_i}} \oplus \mc{D}_{A_{j_i-1}} \oplus \cdots \oplus \mc{D}_{A_{j_{i-1}+1}}
	\]
	defined by
	\begin{align*}
		Z_{|J_i|}^{\{j_i\}, \dots, \{j_{i-1}+1\}}
		(D_{A_{J_i}} x)
		\coloneqq\,
		&D_{A_{j_i}} A_{j_i-1} \cdots A_{j_{i-1}+1} x
		\oplus D_{A_{j_i-1}} A_{j_i-2} \cdots A_{j_{i-1}+1} x
		\oplus \cdots \\
		&\qquad\oplus D_{A_{j_{i-1}+1}} x
	\end{align*}
	is a unitary operator, where  $x \in \mc H_{j_{i-1}+1}$ and $|J_i|$ denotes the cardinality of the set $J_i$. In the trivial case where $J_i$ is a singleton (i.e., $J_i = \{j_i\}$), the corresponding isometry
	\[
	Z_{|J_i|}^{\{j_i\}}: \mc{D}_{A_{j_i}} \longrightarrow \mc{D}_{A_{j_i}}
	\]
	is understood to be the identity operator on $\mc{D}_{A_{j_i}}$.

	\begin{proposition}\label{partition}
		Let $A \in B(\mc{H}, \mc{K})$ be a contraction such that $A = A_k \cdots A_1$, where each $A_i \in B(\mc{H}_i, \mc{H}_{i+1})$ is a contraction with $\mc{H}_1 = \mc{H}$ and $\mc{H}_{k+1} = \mc{K}$.
		\TFAE
		\begin{enumerate}[\rm (i)]
			\item The factorization \( A = A_k \cdots A_1 \) is a \( k \)-regular factorization.
			
			\item For every disjoint partition $J_1, \dots, J_r$ of $\{1, \dots, k\}$, the factorization $A = A_{J_r} \cdots A_{J_1}$ is $r$-regular, and for each $i \in \{1, \dots, r\}$, the sub-factorization $A_{J_i} = A_{j_i} \cdots A_{j_{i-1} + 1}$ is $|J_i|$-regular.
			
			\item There exists a disjoint partition $J_1, \dots, J_r$ of $\{1, \dots, k\}$ such that the factorization $A = A_{J_r} \cdots A_{J_1}$ is $r$-regular, and for each $i \in \{1, \dots, r\}$, the sub-factorization $A_{J_i} = A_{j_i} \cdots A_{j_{i-1} + 1}$ is $|J_i|$-regular.
		\end{enumerate}
	\end{proposition}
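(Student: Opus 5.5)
The key is a factorization identity for the isometries. For any partition $J_1,\dots,J_r$ of $\{1,\dots,k\}$ we claim
\[
Z_k=\Bigl(Z_{|J_r|}^{\{j_r\},\dots,\{j_{r-1}+1\}}\oplus\cdots\oplus Z_{|J_1|}^{\{j_1\},\dots,\{1\}}\Bigr)\, Z_r^{J_r,\dots,J_1},
\]
where the two sides are compared after the natural reordering of the direct sum $\bigoplus_{i=k}^{1}\mc D_{A_i}$ into blocks indexed by $J_r,\dots,J_1$. This ordering already agrees with the one used in \eqref{Z_k}, since the blocks are consecutive.

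To verify the identity, it suffices to check it on the dense set $\{D_Ah: h\in\mc H_1\}$, because all maps involved are bounded isometries. Take the $J_i$-component of $Z_r^{J_r,\dots,J_1}(D_Ah)$, namely $D_{A_{J_i}}x$ with $x=A_{J_{i-1}}\cdots A_{J_1}h=A_{j_{i-1}}\cdots A_1h$. Applying $Z_{|J_i|}^{\{j_i\},\dots,\{j_{i-1}+1\}}$ to it gives
\[
D_{A_{j_i}}A_{j_i-1}\cdots A_1h\oplus\cdots\oplus D_{A_{j_{i-1}+1}}A_{j_{i-1}}\cdots A_1h .
\]
These are exactly the components of $Z_k(D_Ah)$ indexed by $J_i$. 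When $J_i$ is a singleton the identity convention is used, and the check is the same. Thus $Z_k=W\,Z_r^{J}$, where $W$ is the block-diagonal direct sum above. Both $W$ and $Z_r^J$ are isometries, by the isometry computation preceding Definition \ref{def_Z_k} applied to each (sub)factorization.

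The equivalences now reduce to an elementary fact: if $W,V$ are isometries, then $WV$ is unitary if and only if both $W$ and $V$ are unitary. If both are unitary, the product clearly is. Conversely, suppose $WV$ is surjective. Then $W$ is surjective, since $\im(WV)\sub\im(W)$; an isometric surjection is unitary, so $W$ is unitary. Then $V=W^{*}(WV)$ is a product of unitaries and hence surjective, so $V$ is unitary too. Also, a direct sum of isometries is unitary if and only if each summand is.

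With this in hand the implications follow. For (i)$\Rightarrow$(ii), take any partition. Since $Z_k$ is unitary, both $Z_r^J$ and $W$ are unitary, and therefore each block $Z_{|J_i|}$ is unitary. The implication (ii)$\Rightarrow$(iii) is trivial. For (iii)$\Rightarrow$(i), the hypotheses say exactly that $Z_r^J$ and each block are unitary, hence so is $W$, and so $Z_k=WZ_r^J$ is unitary.

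The main point needing care is the bookkeeping in the identity: matching domains and codomains ($\mc D_{A_{J_i}}$ as the domain of $Z_{|J_i|}$), and confirming that the products telescope correctly. Once that is settled, the rest is routine.
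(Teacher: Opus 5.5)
Your proposal is correct and matches the paper's proof. Both verify the identity $Z_k = \bigl(\bigoplus_{i=r}^{1} Z_{|J_i|}^{\{j_i\},\dots,\{j_{i-1}+1\}}\bigr) Z_r^{J_r,\dots,J_1}$ on the dense range of $D_A$, extend it by continuity, and conclude from the fact that a product of two isometries is unitary if and only if both factors are. You also write out the short proofs of that fact and of the direct-sum step, which the paper leaves implicit.
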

	
	\begin{proof}
		For an arbitrary partition of the index set $\{1,\dots,k\}$ into $r$ disjoint subsets $J_1,\dots,J_r$, we claim that the following identity holds
		\begin{equation*}
			Z_k = \left( Z_{|J_r|}^{\{k\}, \dots, \{j_{r-1}+1\}} \oplus Z_{|J_{r-1}|}^{\{j_{r-1}\}, \dots, \{j_{r-2}+1\}} \oplus \cdots \oplus Z_{|J_1|}^{\{j_1\}, \dots, \{1\}} \right) Z_r^{J_r, \dots, J_1}.
		\end{equation*}
		Recall that the isometry
		$$
		Z_r^{J_r, \dots, J_1}: \mc{D}_A \longrightarrow \mc{D}_{A_{J_r}} \oplus \cdots \oplus \mc{D}_{A_{J_1}}
		$$
		is densely defined on the range of $D_A$ by
		$$
		Z_r^{J_r, \dots, J_1}(D_A h) \coloneqq D_{A_{J_r}} A_{J_{r-1}} \cdots A_{J_1} h \oplus \cdots \oplus D_{A_{J_1}} h.
		$$
		Furthermore, for each $i=1,\dots,r$, the sub-factorization $A_{J_i} = A_{j_i} \cdots A_{j_{i-1} + 1}$ induces a corresponding isometry
		\[
		Z_{|J_i|}^{\{j_i\}, \dots, \{j_{i-1}+1\}}: \mc{D}_{A_{J_i}} \longrightarrow \mc{D}_{A_{j_i}} \oplus \mc{D}_{A_{j_i-1}} \oplus \cdots \oplus \mc{D}_{A_{j_{i-1}+1}}
		\]
		which is explicitly determined for $x\in \mc H_{j_{i-1}+1}$  by
		\begin{align*}
			Z_{|J_i|}^{\{j_i\}, \dots, \{j_{i-1}+1\}}
			(D_{A_{J_i}} x)
			\coloneqq\,
			&D_{A_{j_i}} A_{j_i-1} \cdots A_{j_{i-1}+1} x
			\oplus D_{A_{j_i-1}} A_{j_i-2} \cdots A_{j_{i-1}+1} x
			\oplus \cdots \\
			&\qquad\oplus D_{A_{j_{i-1}+1}} x.
		\end{align*}
		Now, observe that for any vector $h \in \mc{H}$, we have
		\begin{align*}
			&\left( Z_{|J_r|}^{\{k\}, \dots, \{j_{r-1}+1\}} \oplus Z_{|J_{r-1}|}^{\{j_{r-1}\}, \dots, \{j_{r-2}+1\}} \oplus \cdots \oplus Z_{|J_1|}^{\{j_1\}, \dots, \{1\}} \right) Z_r^{J_r, \dots, J_1} (D_A h) \\
			&= \left( Z_{|J_r|}^{\{k\}, \dots, \{j_{r-1}+1\}} \oplus \cdots \oplus Z_{|J_1|}^{\{j_1\}, \dots, \{1\}} \right) (D_{A_{J_r}} A_{J_{r-1}} \cdots A_{J_1} h \oplus \cdots \oplus D_{A_{J_1}} h) \\
			&= Z_{|J_r|}^{\{k\}, \dots, \{j_{r-1}+1\}}(D_{A_{J_r}} A_{J_{r-1}} \cdots A_{J_1} h) \oplus \cdots \oplus Z_{|J_1|}^{\{j_1\}, \dots, \{1\}}(D_{A_{J_1}} h) \\
			&= (D_{A_k} A_{k-1} \cdots A_{j_{r-1}+1} A_{J_{r-1}} \cdots A_{J_1} h \oplus \cdots \oplus D_{A_{j_{r-1}+1}} A_{J_{r-1}} \cdots A_{J_1} h) \oplus \cdots \\
			&\quad \oplus (D_{A_{j_1}} A_{j_1-1} \cdots A_1 h \oplus \cdots \oplus D_{A_1} h) \\
			&= D_{A_k} A_{k-1} \cdots A_1 h \oplus D_{A_{k-1}} A_{k-2} \cdots A_1 h \oplus \cdots \oplus D_{A_1} h \\
			&= Z_k(D_A h).
		\end{align*}
		Because this identity holds on the dense range of $D_A$, it extends uniquely to the closure, thereby establishing the desired result
		\begin{equation}\label{deco_relation_zK}
			Z_k = \left( \bigoplus_{i=r}^{1} Z_{|J_i|}^{\{j_i\}, \dots, \{j_{i-1}+1\}} \right) Z_r^{J_r, \dots, J_1}.
		\end{equation}
		The conclusion of the proposition follows from the elementary fact that if a composition \( V = V_2 V_1 \) consists of isometries \( V_1 \) and \( V_2 \), then \( V \) is unitary if and only if both \( V_1 \) and \( V_2 \) are unitary.
	\end{proof}
	
	\begin{proposition}\label{propo_k_regular}
		For a given factorization \( A = A_k \cdots A_1 \) with $k \geq 2$, the following statements are equivalent
		\begin{enumerate}[label=\rm(\roman*), ref=\thetheorem(\roman*)]
			\item The factorization \( A = A_k \cdots A_1 \) is a \( k \)-regular factorization.
			
			\item The factorization \( A = A_k(A_{k-1} \cdots A_1) \) is a $2$-regular factorization, and \( A_{k-1} \cdots A_1 \) is a \( (k-1) \)-regular factorization.
			
			\item The factorization \( A_k \cdots A_2 \) is a \( (k-1) \)-regular factorization, and the factorization \( A = (A_k \cdots A_2) A_1 \) is a $2$-regular factorization.
			\item The factorizations $(A_k \cdots A_{j+1})(A_j \cdots A_1)$ are $2$-regular factorizations for all $j = 1, \dots, k-1$.
		\end{enumerate}
	\end{proposition}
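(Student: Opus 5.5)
The equivalences (i)$\Leftrightarrow$(ii) and (i)$\Leftrightarrow$(iii) follow directly from Proposition~\ref{partition}, applied to two particular partitions of $\{1,\dots,k\}$.

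For (i)$\Leftrightarrow$(ii), I take $r=2$ with $J_1=\{k-1,\dots,1\}$ and $J_2=\{k\}$. Since $J_2$ is a singleton, its isometry $Z_1^{\{k\}}$ is the identity, so it is automatically unitary. The condition in Proposition~\ref{partition}(iii) for this partition then reads: $A=A_k(A_{k-1}\cdots A_1)$ is $2$-regular and $A_{k-1}\cdots A_1$ is $(k-1)$-regular. Hence (i) implies (ii) via part (ii) of Proposition~\ref{partition}, and (ii) implies (i) via part (iii).

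For (i)$\Leftrightarrow$(iii), I argue symmetrically with $J_1=\{1\}$ and $J_2=\{k,\dots,2\}$.

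For (i)$\Rightarrow$(iv), fix $j$ and apply Proposition~\ref{partition}(ii) to the two-block partition $J_1=\{j,\dots,1\}$, $J_2=\{k,\dots,j+1\}$. This gives that $(A_k\cdots A_{j+1})(A_j\cdots A_1)$ is $2$-regular.

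The real content is (iv)$\Rightarrow$(i), which I prove by induction on $k$. For $k=2$ there is nothing to prove. Assume the result holds for $k-1$ factors. By (ii)$\Rightarrow$(i), which is already proved, it suffices to show two things: that $A=A_k(A_{k-1}\cdots A_1)$ is $2$-regular, which is (iv) with $j=k-1$, and that $B:=A_{k-1}\cdots A_1$ is $(k-1)$-regular. By the induction hypothesis, the second claim reduces to showing that $(A_{k-1}\cdots A_{j+1})(A_j\cdots A_1)$ is $2$-regular for each $j\le k-2$. So the key lemma, and the main obstacle, is the following: if $CBA'$ (three contractions) has both $(CB)A'$ and $C(BA')$ $2$-regular, then $BA'$ is $2$-regular. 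Here $C=A_k\cdots A_{j+1}\ldots$ is grouped suitably; concretely, set $C=A_k$, $B=A_{k-1}\cdots A_{j+1}$ and $A'=A_j\cdots A_1$. The pair $(CB)A'$ is $2$-regular by (iv) at $j$, and $C(BA')$ is $2$-regular by (iv) at $k-1$.

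To prove the lemma, I first show that $C\cdot B\cdot A'$ is $3$-regular, and then Proposition~\ref{partition} yields that $BA'$ is $2$-regular. By \eqref{deco_relation_zK},
\[
Z_3=(I\oplus Z_2^{B,A'})\,Z_2^{C,BA'}=(Z_2^{C,B}\oplus I)\,Z_2^{CB,A'}.
\]
The factors $Z_2^{C,BA'}$ and $Z_2^{CB,A'}$ are unitary. Hence $\operatorname{range}Z_3=\mc D_C\oplus\operatorname{range}Z_2^{B,A'}$ and also $\operatorname{range}Z_3=\operatorname{range}Z_2^{C,B}\oplus\mc D_{A'}$. Since each of these ranges is closed, comparing them shows that the range contains $\mc D_C\oplus 0\oplus 0$ and $0\oplus 0\oplus\mc D_{A'}$. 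So it remains to show that the middle summand $0\oplus\mc D_B\oplus 0$ lies in the range. From the first description, the range is $\mc D_C\oplus M$ with $M\subseteq \mc D_B\oplus\mc D_{A'}$. From the second, its projection onto $\mc D_B$ is all of $\mc D_B$, and it contains $0\oplus0\oplus\mc D_{A'}$. So for $b\in\mc D_B$, choose $c\oplus b\oplus a$ in the range and subtract $c\oplus0\oplus0$ and $0\oplus0\oplus a$; this gives $0\oplus b\oplus 0$ in the range. Therefore $Z_3$ is onto, i.e.\ $3$-regular, and the lemma follows, completing the induction.

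I expect the most care to be needed in this range bookkeeping. The point to verify is that both decompositions really identify the same closed subspace, so that the direct-sum descriptions of the range hold exactly and not just up to closure.
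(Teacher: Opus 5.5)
Your proof is correct, but it takes a genuinely different route from the paper for the only nontrivial implication, (iv)$\Rightarrow$(i).

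The paper does not induct on $k$. It fixes $Z_k$ and compares the descriptions of the single closed subspace $\operatorname{range}Z_k$ coming from the two-block partitions at $j=k-1,k-2,\dots,1$ in turn. Each comparison of consecutive $j$'s shows that the upper partial product $A_k\cdots A_{j+1}$ is $(k-j)$-regular, i.e. $\overline{\{D_{A_k}A_{k-1}\cdots A_{j+1}h\oplus\cdots\oplus D_{A_{j+1}}h\}}=\mathcal D_{A_k}\oplus\cdots\oplus\mathcal D_{A_{j+1}}$. At $j=1$ this gives surjectivity of $Z_k$, so the paper effectively finishes with (iii)$\Rightarrow$(i).

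You instead isolate a three-factor lemma, which is just the case $k=3$ of the proposition (the Sz.-Nagy--Foia\c{s} lemma cited in the remark after it). You apply it with $C=A_k$, using only the partitions at $j$ and at $k-1$, and let the induction hypothesis carry information between different $j$'s. You finish with (ii)$\Rightarrow$(i) via the lower product $A_{k-1}\cdots A_1$.

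The underlying range comparison is the same in both. Your version is more modular, since the only concrete computation is the three-factor one. The paper's single downward sweep avoids induction and yields regularity of all the upper partial products as a byproduct.

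On the bookkeeping you flag: the identities $\operatorname{range}Z_3=\mathcal D_C\oplus\operatorname{range}Z_2^{B,A'}=\operatorname{range}Z_2^{C,B}\oplus\mathcal D_{A'}$ hold exactly, with no closures needed. This is because $Z_2^{C,BA'}$ and $Z_2^{CB,A'}$ are onto, and $\operatorname{range}Z_3$ is closed because $Z_3$ is an isometry.

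One step is not justified as written. You claim that, from the second description, the projection of the range onto $\mathcal D_B$ is all of $\mathcal D_B$. A priori it only contains the dense subspace $\{D_Bh\}$, since coordinate projections of closed subspaces need not be closed. The repair is one line:
\begin{itemize}
\item for $h$ in the domain of $B$, the vector $D_CBh\oplus D_Bh\oplus 0$ lies in $\operatorname{range}Z_2^{C,B}\oplus\mathcal D_{A'}=\operatorname{range}Z_3$;
\item subtracting $D_CBh\oplus 0\oplus 0$, which lies in the range by the first description, gives $0\oplus D_Bh\oplus 0$;
\item closedness of $\operatorname{range}Z_3$ then yields $0\oplus\mathcal D_B\oplus 0\subseteq\operatorname{range}Z_3$.
\end{itemize}
This density-plus-closure step is exactly how the paper handles the corresponding point.
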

	\begin{proof} Conditions (i), (ii), and (iii) are equivalent, and (i) implies (iv), all of which follow from Proposition~\ref{partition}. It remains to prove that (iv) implies (i). Consider the sequence of partitions of the index set \( \{1, \dots, k\} \)
		\begin{align*}
			\{\{k\}, \{k-1, \dots, 1\}\},\dots, \{\{k, \dots, j+1\}, \{j, \dots, 1\}\}, \dots ,
			\{\{k, \dots, 2\}, \{1\}\}.
		\end{align*}
		Corresponding to the partition $\{k, \dots, j+1\}, \{j, \dots, 1\}$ with $j \in \{1, \dots, k-1\}$, the isometry $Z_k$ admits the following structural decomposition, as given in \eqref{deco_relation_zK} from the proof of Proposition \ref{partition}
		\[
		Z_k = \left( Z_{k-j}^{\{k\}, \dots, \{j+1\}} \oplus Z_{j}^{\{j\}, \dots, \{1\}} \right) Z_2^{\{k, \dots, j+1\}, \{j, \dots, 1\}}.
		\]
		For $j=k-1,$ since $A_k(A_{k-1}\cdots A_1)$ is 2-regular, then the image of $\mc D_A$ of under $Z_k$ is given as follows
		\begin{align}
			Z_k(\mc{D}_A)
			&=\left( Z_{1}^{\{k\}} \oplus Z_{k-1}^{\{k-1\}, \dots, \{1\}} \right) Z_2^{\{k\}, \{k-1, \dots, 1\}}(\mc D_A) \nonumber\\
			&= \mc{D}_{A_k} \oplus
			\overline{\left\{
				D_{A_{k-1}} A_{k-2} \cdots A_1 h_1 \oplus \cdots \oplus D_{A_1} h_1
				:\; h_1 \in \mc{H}_1
				\right\}}.\label{eq:k_1}
		\end{align}
		Similarly, evaluating the image of $\mc D_A$ of under $Z_k$ for the partition at $j = k-2$ yields
		\begin{align} \label{eq:k_2}
			Z_k(\mc{D}_A) &= \overline{ \left\{ D_{A_k} A_{k-1} h_{k-1} \oplus D_{A_{k-1}} h_{k-1} : h_{k-1} \in \mc{H}_{k-1} \right\} } \nonumber \\
			&\quad \oplus \overline{ \left\{ D_{A_{k-2}} A_{k-3} \cdots A_1 h_1 \oplus \cdots \oplus D_{A_1} h_1 : h_1 \in \mc{H}_1 \right\} }.
		\end{align}
		Comparing equations \eqref{eq:k_1} and \eqref{eq:k_2}, we have the following inclusion relation
		
		\[
		\mc{D}_{A_k} \oplus \{0\} \subseteq \overline{ \left\{ D_{A_k} A_{k-1} h_{k-1} \oplus D_{A_{k-1}} h_{k-1} : h_{k-1} \in \mc{H}_{k-1} \right\} }.
		\]
		From this, observe that for any element
		\(D_{A_k} A_{k-1} h_{k-1} \oplus D_{A_{k-1}} h_{k-1}\), the element
		\(D_{A_k} A_{k-1} h_{k-1} \oplus 0\) also belongs to
		\[
		\overline{\left\{
			D_{A_k} A_{k-1} h_{k-1} \oplus D_{A_{k-1}} h_{k-1}
			:\; h_{k-1} \in \mc{H}_{k-1}
			\right\}}.
		\]
		Consequently,
		\[
		0 \oplus D_{A_{k-1}} h_{k-1}\in \overline{\left\{
			D_{A_k} A_{k-1} h_{k-1} \oplus D_{A_{k-1}} h_{k-1}
			:\; h_{k-1} \in \mc{H}_{k-1}
			\right\}}.
		\]
		Hence, we have
		
		\[
		\{0\} \oplus \mc{D}_{A_{k-1}} \subseteq \overline{ \left\{ D_{A_k} A_{k-1} h_{k-1} \oplus D_{A_{k-1}} h_{k-1} : h_{k-1} \in \mc{H}_{k-1} \right\} }.
		\]
		Thus, we obtain
		\begin{equation} \label{eq:k_4}
			\overline{ \left\{ D_{A_k} A_{k-1} h_{k-1} \oplus D_{A_{k-1}} h_{k-1} : h_{k-1} \in \mc{H}_{k-1} \right\} } = \mc{D}_{A_k} \oplus \mc{D}_{A_{k-1}}.
		\end{equation}
		Proceeding to the subsequent partition at $j = k-3$, we get
		\begin{align} \label{eq:k_5}
			Z_k(\mc{D}_A) &= \overline{ \left\{ D_{A_k} A_{k-1} A_{k-2} h_{k-2} \oplus D_{A_{k-1}} A_{k-2} h_{k-2} \oplus D_{A_{k-2}} h_{k-2} : h_{k-2} \in \mc{H}_{k-2} \right\} } \nonumber \\
			&\quad \oplus \overline{ \left\{ D_{A_{k-3}} A_{k-4} \cdots A_1 h_1 \oplus \cdots \oplus D_{A_1} h_1 : h_1 \in \mc{H}_1 \right\} }.
		\end{align}
		Applying the same argument to equations \eqref{eq:k_4} and  \eqref{eq:k_5}, we obtain
		\begin{align*}
			\overline{ \left\{ D_{A_k} A_{k-1} A_{k-2} h_{k-2}  \oplus D_{A_{k-1}} A_{k-2} h_{k-2} \oplus D_{A_{k-2}} h_{k-2} : h_{k-2} \in \mc{H}_{k-2} \right\} }  \\ \quad= \mc{D}_{A_k} \oplus \mc{D}_{A_{k-1}} \oplus \mc{D}_{A_{k-2}}.
		\end{align*}
		By iterating this procedure for \( j = k-4, \dots, 2, 1 \), we systematically accumulate the individual defect spaces. At the stage \( j = 1 \), the process yields the identity
		\[
		\overline{ \left\{ D_{A_k} A_{k-1} \cdots A_2 h_2 \oplus \cdots \oplus D_{A_2} h_2 : h_2 \in \mc{H}_2 \right\} } = \mc{D}_{A_k} \oplus \mc{D}_{A_{k-1}} \oplus \cdots \oplus \mc{D}_{A_2}.
		\]
		Finally, substituting this identity  for the partition $j=1$ , we formally arrive at
		\begin{align*}
			Z_k(\mc{D}_A) &= \overline{ \left\{ D_{A_k} A_{k-1} \cdots A_2 h_2 \oplus \cdots \oplus D_{A_2} h_2 : h_2 \in \mc{H}_2 \right\} } \oplus \mc{D}_{A_1} \\
			&= \mc{D}_{A_k} \oplus \mc{D}_{A_{k-1}} \oplus \cdots \oplus \mc{D}_{A_2} \oplus \mc{D}_{A_1} \\
			&= \bigoplus_{i=k}^1 \mc{D}_{A_i}.
		\end{align*}
		Thus, the factorization $A = A_k \cdots A_1$ is a $k$-regular factorization, thereby completing the proof.
	\end{proof}
	
	\begin{remark}
		In particular, when \( k = 3 \), the above proposition establishes Lemma 4.1 of Chapter VII in \cite{NFBK10}.
	\end{remark}
	
	\begin{corollary} \label{cor3}
		Let $A = A_k \cdots A_1$ be a product of contractions with $k \geq 2$. Then $A = A_k \cdots A_1$ is a $k$-regular factorization if and only if the factorizations $A_k(A_{k-1} \cdots A_1)$, $A_{k-1}(A_{k-2} \cdots A_1), \dots, A_3(A_2A_1)$, and $A_2A_1$ are all $2$-regular factorizations.
	\end{corollary}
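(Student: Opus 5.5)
The plan is to argue by induction on $k$, using the equivalence (i)$\Leftrightarrow$(ii) of Proposition~\ref{propo_k_regular} at each step. For $k=2$ the statement is a tautology: the list of required $2$-regular factorizations reduces to $A_2A_1$ itself. Assume now $k\geq 3$ and that the corollary holds for all products of $k-1$ contractions.

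For the forward direction, suppose $A=A_k\cdots A_1$ is $k$-regular. Proposition~\ref{propo_k_regular}(ii) shows that $A_k(A_{k-1}\cdots A_1)$ is $2$-regular and that $A_{k-1}\cdots A_1$ is a $(k-1)$-regular factorization. Applying the induction hypothesis to the contraction $A'=A_{k-1}\cdots A_1\in B(\mc H_1,\mc H_k)$, the factorizations $A_{k-1}(A_{k-2}\cdots A_1),\dots,A_3(A_2A_1)$ and $A_2A_1$ are all $2$-regular. Together with the first one, this is the full list.

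For the converse, assume all the listed factorizations are $2$-regular. The list minus its first entry is exactly the hypothesis of the corollary for $A_{k-1}\cdots A_1$. By induction, $A_{k-1}\cdots A_1$ is therefore $(k-1)$-regular. Since $A_k(A_{k-1}\cdots A_1)$ is $2$-regular by assumption, condition (ii) of Proposition~\ref{propo_k_regular} holds, and so (i) holds: $A=A_k\cdots A_1$ is $k$-regular.

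There is essentially no obstacle. The only point to check is that Proposition~\ref{propo_k_regular} applies to the shorter product $A_{k-1}\cdots A_1$ with $\mc H_k$ as the target space; this is immediate, since that proposition is stated for arbitrary finite products of contractions between Hilbert spaces. In the final step $k-1=2$, note that "$A_2A_1$ is $2$-regular" is precisely the base case.
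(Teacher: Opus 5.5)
Your induction is correct and matches the argument the paper intends. The paper states this corollary without a separate proof, as an immediate consequence of Proposition~\ref{propo_k_regular}: apply the equivalence (i)$\Leftrightarrow$(ii) repeatedly to split off the leftmost factor, which is exactly what your induction on $k$ does.
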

	The following proposition generalizes the classical result of Sz.-Nagy and Foia\c{s} \cite{NF74} to the broader framework of $k$-regular factorizations.

	\begin{proposition} \label{equivalent_prop} Let $A = A_k \cdots A_1$ be a product of contractions with $k \geq 2$. The following conditions are equivalent
		\begin{enumerate}[\rm (i)] \item $A = A_k \cdots A_1$ is a $k$-regular factorization. \item $\overline{ \{ D_{A_{j+1}} h_{j+1} \oplus D_{A_1^* \cdots A_j^*} h_{j+1} : h_{j+1} \in \mathcal{H}_{j+1} \} } = \mathcal{D}_{A_{j+1}} \oplus \mathcal{D}_{A_1^* \cdots A_j^*}$ for $j = 1, \dots, k-1$. \item ${D}_{A_{j+1}} \mathcal{H}_{j+1} \cap {D}_{A_1^* \cdots A_j^*} \mathcal{H}_{j+1} = \{0\}$ for $j = 1, \dots, k-1$. \end{enumerate} \end{proposition}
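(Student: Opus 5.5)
The plan is to reduce the statement to the two-factor case by means of Corollary~\ref{cor3}, and then to apply the classical Sz.-Nagy--Foia\c{s} characterization of regular (that is, $2$-regular) factorizations from \cite{NF74} to each two-factor piece. For $j=1,\dots,k-1$, set $B_j \coloneqq A_{j+1}$ and $C_j \coloneqq A_j\cdots A_1$. Then $C_j^* = A_1^*\cdots A_j^*$, so $D_{C_j^*}=D_{A_1^*\cdots A_j^*}$ and $\mc D_{C_j^*}=\mc D_{A_1^*\cdots A_j^*}$. By Corollary~\ref{cor3}, condition (i) holds if and only if every factorization $A_{j+1}\cdots A_1 = B_jC_j$, $j=1,\dots,k-1$, is $2$-regular.

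For (i)$\Leftrightarrow$(ii), I will invoke the theorem of \cite{NF74} for a product $BC$ of two contractions with $B\in B(\mc H',\mc H'')$. It states that $BC$ is regular if and only if the set $\{D_B h\oplus D_{C^*}h : h\in\mc H'\}$ is dense in $\mc D_B\oplus\mc D_{C^*}$. Applying this with $B=B_j$, $C=C_j$ and $\mc H'=\mc H_{j+1}$ gives exactly the $j$-th identity in (ii). Running over $j=1,\dots,k-1$ and combining with the reduction above yields (i)$\Leftrightarrow$(ii).

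The main obstacle is the two-factor theorem itself, which I treat as the classical input. If a self-contained argument is wanted, I would follow the idea in \cite{NF74}. One first identifies the orthogonal complement of $Z_2(\mc D_{BC})$ in $\mc D_B\oplus\mc D_C$ with a subspace built from pairs $a\oplus b$ satisfying $D_B a = \ldots$. One then transports this complement, via the unitary coupling $D_C C^* = C^*D_{C^*}$ between $\mc D_C$ and $\mc D_{C^*}$, onto the orthogonal complement of $\{D_Bh\oplus D_{C^*}h\}$ in $\mc D_B\oplus\mc D_{C^*}$. This transport is the delicate step.

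The equivalence (ii)$\Leftrightarrow$(iii) is elementary, and I would carry it out directly for each fixed $j$. Write $B=A_{j+1}$ and $E=D_{A_1^*\cdots A_j^*}$, both acting on $\mc H_{j+1}$. A vector $a\oplus b\in\mc D_B\oplus\mc D_{E}$ is orthogonal to every $D_Bh\oplus Eh$ if and only if $D_Ba+Eb=0$.

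\emph{(iii)$\Rightarrow$(ii).} Suppose $D_Ba+Eb=0$. Then $D_Ba=-Eb$ lies in $D_B\mc H_{j+1}\cap E\mc H_{j+1}=\{0\}$, so $D_Ba=0$ and $Eb=0$. Since $a\in\mc D_B=(\ker D_B)^\perp$ and $D_Ba=0$, we get $a=0$; likewise $b=0$. Hence the orthogonal complement is trivial and the set in (ii) is dense.

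\emph{(ii)$\Rightarrow$(iii).} Suppose $0\neq x=D_Bu=Ev$. Put $a=P_{\mc D_B}u$ and $b=-P_{\mc D_E}v$. Then $D_Ba=D_Bu$ and $Eb=-Ev$, so $D_Ba+Eb=x-x=0$. Moreover $a\oplus b\neq 0$, because $D_Ba=x\neq0$. Thus $a\oplus b$ is a nonzero vector orthogonal to the set in (ii), which contradicts density.
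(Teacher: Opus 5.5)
Your proposal is correct and follows the paper's route: Corollary~\ref{cor3} reduces $k$-regularity to $2$-regularity of each $A_{j+1}(A_j\cdots A_1)$, and the Sz.-Nagy--Foia\c{s} two-factor criterion from \cite{NF74}, applied with $C_j^*=A_1^*\cdots A_j^*$, then yields (ii) and (iii) for each $j$. Your direct proof of (ii)$\Leftrightarrow$(iii) via the orthogonality relation $D_Ba+Eb=0$ is correct and spells out a step the paper leaves inside the citation; your sketch of a self-contained two-factor proof is incomplete, but it is not needed, since both you and the paper take that theorem as given.
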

	
	\begin{proof}
		The proof  of this equivalence follows from the work of Sz.-Nagy and Foia\c{s} \cite{NF74} and Corollary \ref{cor3}.
	\end{proof}
	
	
	\begin{proposition}\label{k_regular_pro}
		Let $A = A_k \cdots A_1$ be a product of contractions with $k \geq 2$. The following assertions are equivalent
		\begin{enumerate}[\rm (i)]
			\item If $A = A_k \cdots A_1$ is a $k$-regular factorization, then $A^* = A_1^* \cdots A_k^*$ is also a $k$-regular factorization.
			
			\item If $A$ is an isometry (respectively, unitary), then $A = A_k \cdots A_1$ is a $k$-regular factorization if and only if $A_1, \dots, A_k$ are isometries (respectively, unitaries).
			
			\item If $A_2, \dots, A_k$ are isometries, or $A_{k-1}^*, \dots, A_1^*$ are isometries, then $A = A_k \cdots A_1$ is a $k$-regular factorization.
			
			\item Since $Z_k : \mathcal{D}_A \to \bigoplus_{i=k}^1 \mathcal{D}_{A_i}$ is an isometry, it follows that
			\[
			\dim(\mathcal{D}_A) \le \sum_{i=k}^1 \dim(\mathcal{D}_{A_i}).
			\]
			Furthermore, equality holds if $A = A_k \cdots A_1$ is a $k$-regular factorization. Moreover, if $\dim(\mathcal{D}_A) < \infty$, then $A = A_k \cdots A_1$ is a $k$-regular factorization if and only if
			\[
			\dim(\mathcal{D}_A) = \sum_{i=k}^1 \dim(\mathcal{D}_{A_i}).
			\]
		\end{enumerate}
	\end{proposition}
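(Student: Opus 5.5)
We treat the four assertions one at a time: each is a direct consequence of Definition~\ref{def_Z_k} together with the reductions already established. The key tool is the cut criterion of Proposition~\ref{propo_k_regular}(iv). It says that $A = A_k \cdots A_1$ is $k$-regular if and only if every two-block factorization $(A_k \cdots A_{j+1})(A_j \cdots A_1)$, $j=1,\dots,k-1$, is $2$-regular.

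For (i), we apply this criterion to $A^* = A_1^* \cdots A_k^*$. The cuts of the adjoint factorization are exactly
\[
A^* = (A_j \cdots A_1)^* (A_k \cdots A_{j+1})^*, \qquad j=1,\dots,k-1,
\]
that is, the adjoints of the cuts of the original factorization. It therefore suffices to know the two-factor statement: if $A = BC$ is $2$-regular, then $A^* = C^*B^*$ is $2$-regular. This is the classical symmetry of regular factorizations due to Sz.-Nagy and Foia\c{s} (Chapter~VII of \cite{NFBK10}). It can also be read off from Proposition~\ref{equivalent_prop}(iii) with $k=2$, since the condition $D_{B}\mc H_2 \cap D_{C^*}\mc H_2=\{0\}$ is symmetric under $(B,C)\mapsto (C^*,B^*)$. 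Since $(A^*)^* = A$, the implication in (i) is in fact an equivalence. The only point needing care is matching the index bookkeeping of the cuts; everything else is routine.

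For (ii), suppose $A$ is an isometry. Then $\mc D_A = \{0\}$, so $Z_k$ is unitary if and only if its codomain $\bigoplus_{i=k}^1 \mc D_{A_i}$ is zero, that is, if and only if every $A_i$ is an isometry. Now suppose $A$ is unitary. If the factorization is $k$-regular, the isometric case shows each $A_i$ is an isometry. By (i), $A^* = A_1^*\cdots A_k^*$ is also $k$-regular, and $A^*$ is an isometry, so each $A_i^*$ is an isometry. Hence each $A_i$ is unitary. Conversely, if all $A_i$ are unitary, all defect spaces vanish and $Z_k$ is trivially unitary.

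For (iii), suppose $A_2,\dots,A_k$ are isometries. Then $D_{A_i}=0$ for $i\ge 2$ and $A^*A = A_1^*A_1$, so $D_A = D_{A_1}$. Hence $Z_k(D_A h) = 0\oplus\cdots\oplus 0\oplus D_{A_1}h$, whose range is dense in $\{0\}\oplus\cdots\oplus\{0\}\oplus\mc D_{A_1}$. Thus $Z_k$ is onto the full codomain and is unitary. In the second case, $A_1^*,\dots,A_{k-1}^*$ are isometries. Consider the adjoint factorization $A^* = A_1^*\cdots A_k^*$: its last factor is $A_k^*$, and all the factors applied after it, namely $A_{k-1}^*,\dots,A_1^*$, are isometries. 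This is exactly the first case, so $A^*$ is $k$-regular, and (i) applied to $A^*$ gives that $A$ is $k$-regular.

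For (iv), the inequality holds because an isometry cannot increase Hilbert dimension. If $Z_k$ is unitary, the dimensions agree. If $\dim \mc D_A<\infty$ and equality holds, then the codomain is finite dimensional of the same dimension. An isometry between finite-dimensional spaces of equal dimension is surjective, hence $Z_k$ is unitary.
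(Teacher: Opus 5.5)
Your proof is correct. For (ii)--(iv) it is essentially the paper's argument, and you actually carry out the steps the paper only calls ``analogous.'' You get the unitary case of (ii) and the co-isometric case of (iii) by passing to $A^*$ and invoking (i). You also check the isometric case of (iii) by computing $Z_k$ directly, instead of combining Proposition~\ref{propo_k_regular}(iii) with (ii) as the paper does.

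The real difference is in (i). You and the paper rest on the same key fact: the Sz.-Nagy--Foia\c{s} two-factor criterion $D_B\mc H_2\cap D_{C^*}\mc H_2=\{0\}$, which is symmetric under $(B,C)\mapsto(C^*,B^*)$. What differs is how the two-factor pieces are assembled.

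You use the cut criterion of Proposition~\ref{propo_k_regular}(iv). The paper instead uses Proposition~\ref{equivalent_prop} to conclude that each $(A_1^*\cdots A_j^*)A_{j+1}^*$ is $2$-regular, and then cites Corollary~\ref{cor3}.

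Your choice makes the bookkeeping automatic, because the cuts of $A^*$ are exactly the adjoints of the cuts of $A$. In the paper's route the reversal has to be handled by hand. The factorizations it obtains split off the rightmost factor of a left partial product of $A^*=A_1^*\cdots A_k^*$. That is the mirror image of the chain in Corollary~\ref{cor3}: applied to $A^*$, the corollary would instead ask for $A_i^*(A_{i+1}^*\cdots A_k^*)$ to be $2$-regular. So the paper's step really needs the mirrored statement, obtained by iterating Proposition~\ref{propo_k_regular}(iii). Your formulation avoids this issue entirely.
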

	
	\begin{proof}
		(i) Suppose that $A = A_k \cdots A_1$ is a $k$-regular factorization. By Proposition \ref{equivalent_prop}, we have
		\begin{align*}
			\overline{\mathcal{D}_{A_{j+1}} \mathcal{H}_{j+1} \cap \mathcal{D}_{A_1^* \cdots A_j^*} \mathcal{H}_{j+1}} = \{0\}, \quad j = 1, \dots, k-1.
		\end{align*}
		It follows that each factorization
		\[
		(A_1^* \cdots A_j^*)A_{j+1}^*, \quad j = 1, \dots, k-1,
		\]
		is $2$-regular. Hence, by Corollary \ref{cor3}, the product $A^* = A_1^* \cdots A_k^*$ is a $k$-regular factorization.
		
		(ii) If $A$ is an isometry, then $\mathcal{D}_A = \{0\}$. Thus, $Z_k$ is unitary if and only if $\mathcal{D}_{A_i} = \{0\}$ for all $i = 1, \dots, k$, which is equivalent to each $A_i$ being an isometry. The unitary case follows analogously.
		
		(iii) Suppose that $A_2, \dots, A_k$ are isometries. Then $A_k \cdots A_2$ is a $(k-1)$-regular factorization, and $(A_k \cdots A_2)A_1$ is a $2$-regular factorization. Hence, by Proposition \ref{propo_k_regular}, $A = A_k \cdots A_1$ is a $k$-regular factorization. The argument for $A_{k-1}^*, \dots, A_1^*$ is analogous.
		
		(iv) Since $Z_k$ is an isometry, the dimension inequality follows immediately. If $\dim(\mathcal{D}_A) < \infty$ and equality holds, then $Z_k$ is surjective and hence unitary. Therefore, the factorization is $k$-regular.
	\end{proof}
	
	We now recall several fundamental results concerning the geometry of unitary dilations of contractions. A detailed exposition of these classical results can be found in Chapter II of \cite{NFBK10}. While we will introduce the essential notation required for our present purposes, we refer the reader to \cite{NFBK10} for any undefined terminology or supplementary background.
	
	\begin{theorem}[Theorems 1.1 and 2.1, Chapter II, \cite{NFBK10}]\label{geometry_of_K}
		Let $T$ be a contraction on a Hilbert space $\mc{H}$. Suppose that $(U, \mc{K})$ is the minimal unitary dilation of $T$, and let $(U_+, \mc{K}_+)$ denote its corresponding minimal isometric dilation. Under these assumptions, the space $\mc{K}$ admits the following geometric decomposition
		\begin{equation}
			\mc{K} = \dots \oplus U^* \mc{L}^* \oplus \mc{L}^* \oplus \mc{H} \oplus \mc{L} \oplus U\mc{L} \oplus \dots,
		\end{equation}
		where $\mc{L}^* = \overline{(U^*-T^*) \mc{H}}$ and $\mc{L} = \overline{(U-T) \mc{H}}$.
		
		Furthermore, the following structural orthogonal decompositions hold
		\begin{align}
			\mc{K} &= M(\mc{L}_*) \oplus \mc{R}, \label{wold_unitary} \\
			\mc{K}_+ &= M_+(\mc{L}_*) \oplus \mc{R} = \mc{H} \oplus M_+(\mc{L}),
		\end{align}
		where the associated space $\mc{L}_* = \overline{(I - U T^*) \mc{H}}$. The spaces $\mc{L}$ and $\mc{L}_*$, which are closed subspaces of $\mc{K}_+$, are wandering for $U_+$ (and consequently $U$), and
		\[
		M_+(\mc{L}_*) = \bigvee_{n=0}^{\infty} U^n (\mc{L}_*), \quad M(\mc{L}_*) = \bigvee_{n=-\infty}^{\infty} U^n (\mc{L}_*).
		\]
		Here, $\mc{R}$ is the subspace of $\mc{K}_+$ that reduces $U_+$ (and consequently for $U$) to its unitary part. Moreover, these spaces satisfy the following properties
		\begin{gather}
			\mc{L} \cap \mc{L}_* = \{0\}, \label{eq:L_intersect} \\
			P^{\mc{L}_*}M_+(\mc{L}) \sub M_+(\mc{L}_*). \label{eq:P_subset}
		\end{gather}
		If $T$ is a c.n.u.\ contraction, then
		\begin{gather}
			M(\mc{L}) \vee M(\mc{L}_*) = \mc{K}, \label{eq:cnu_condition} \\
			\mc{R} = \overline{(I - P^{\mc{L}_*}) M(\mc{L})}, \label{eq:R_subspace}
		\end{gather}
		where $P^{\mc{L}_*}$ denotes the orthogonal projection of $\mc{K}$ onto $M(\mc{L}_*)$.
	\end{theorem}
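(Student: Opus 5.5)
The plan is to realize the minimal unitary dilation concretely and read every assertion off that model. Since minimal unitary dilations are unique up to a unitary equivalence that fixes $\mc H$, every statement can be checked on the Schäffer model $\mc K = \bigoplus_{n<0}\mc D_{T^*} \oplus \mc H \oplus \bigoplus_{n\ge 0}\mc D_T$, with $U$ given by the usual Schäffer matrix. In this model, $U h = Th \oplus D_T h$ (the second component placed in the $0$-th copy of $\mc D_T$) for $h\in\mc H$, and $U^*h = T^*h \oplus D_{T^*}h$ (the second component placed in the $(-1)$-st copy of $\mc D_{T^*}$).

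\emph{The two-sided decomposition and the identity $\mc K_+=\mc H\oplus M_+(\mc L)$.}
1. From the formulas for $U$ and $U^*$, $(U-T)h = D_T h$ in the $0$-th copy of $\mc D_T$. Hence $\mc L$ is exactly that copy, and $U^n\mc L$ is the $n$-th copy.
2. Symmetrically, $\mc L^*$ is the $(-1)$-st copy of $\mc D_{T^*}$, and $U^{*n}\mc L^*$ is the $(-1-n)$-th copy.
3. This gives the displayed decomposition of $\mc K$ and shows that $\mc L$ is wandering.
4. Setting $\mc K_+=\mc H\oplus\bigoplus_{n\ge0}U^n\mc L$ immediately gives $\mc K_+=\mc H\oplus M_+(\mc L)$. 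Minimality, $\mc K_+=\bigvee_{n\ge0}U^n\mc H$, follows by induction, since $D_T\mc H$ is dense in $\mc D_T$.

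\emph{$\mc L_*$ and the Wold decomposition.} For $\mc L_*=\overline{(I-UT^*)\mc H}\sub\mc K_+$, the plan is to verify wandering directly. For $h,h'\in\mc H$ and $n\ge1$, expand
$\langle U^n(I-UT^*)h,(I-UT^*)h'\rangle$ using $P_{\mc H}U^m|_{\mc H}=T^m$ for $m\ge0$; the resulting terms cancel. One also checks that $\|(I-UT^*)h\|=\|D_{T^*}h\|$, so $\mc L_*\cong\mc D_{T^*}$.

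Next, apply the Wold decomposition to the isometry $U_+=U|_{\mc K_+}$. I will show that its wandering space $\mc K_+\ominus U\mc K_+$ equals $\mc L_*$. The inclusion of $\mc L_*$ is clear: $(I-UT^*)h\perp U\mc K_+$ because $P_{\mc H}U^{*}U\mc K_+$ reduces appropriately. The reverse inclusion uses $\mc K_+=\overline{\mc H+U\mc K_+}$ together with the identity $h=(I-UT^*)h+UT^*h$. Wold then gives $\mc K_+=M_+(\mc L_*)\oplus\mc R$, with $\mc R=\bigcap_n U^n\mc K_+$ reducing $U_+$ to a unitary. Extending to $\mc K=\bigvee_n U^{*n}\mc K_+$ gives $\mc K=M(\mc L_*)\oplus\mc R$, because $\mc R$ reduces $U$.

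\emph{The relations \eqref{eq:L_intersect} and \eqref{eq:P_subset}.}
For $\mc L\cap\mc L_*=\{0\}$: a vector in $\mc L$ is orthogonal to $\mc H$. A vector $(I-UT^*)h$ satisfies $P_{\mc H}(I-UT^*)h=(I-TT^*)h$. So a common vector forces $D_{T^*}^2h=0$, hence $D_{T^*}h=0$, hence $(I-UT^*)h=0$ by the norm identity.
For \eqref{eq:P_subset}: $M_+(\mc L)\sub\mc K_+=M_+(\mc L_*)\oplus\mc R$, and $P^{\mc L_*}$ kills $\mc R$. So the projection lands in $M_+(\mc L_*)$.

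\emph{The c.n.u.\ statements, which I expect to be the main obstacle.}
For \eqref{eq:cnu_condition}, let $\mc K' = M(\mc L)\vee M(\mc L_*)$; it reduces $U$. First show that $\mc K\ominus\mc K'$ lies in $\mc H$: vectors orthogonal to $M(\mc L)$ lie in $\bigoplus_n U^{*n}\mc L^*\oplus\mc H$, and orthogonality to $M(\mc L_*)\supseteq\mc L^*$-type pieces removes the negative part. Then on this reducing subspace $U$ coincides with $T$, so it is a unitary part of $T$, which must be $0$. The delicate step is to check carefully that $U^{*n}\mc L^*\sub M(\mc L_*)$ (via $U^*\mc L_*\supseteq$ the appropriate span, i.e.\ $\mc L^*=U^*\mc L_*$).

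For \eqref{eq:R_subspace}, the first step is $(I-P^{\mc L_*})M(\mc L)\sub\mc R$, which holds by \eqref{wold_unitary}. For equality, use $\mc K=M(\mc L)\vee M(\mc L_*)$: any $r\in\mc R$ orthogonal to $(I-P^{\mc L_*})M(\mc L)$ is orthogonal to $M(\mc L)$, and is also orthogonal to $M(\mc L_*)$. Hence $r\perp\mc K$, so $r=0$.
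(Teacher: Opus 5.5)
Your proof is correct in substance, but it takes a different route from the source the paper relies on. The paper gives no proof of its own, only the citation to Sz.-Nagy--Foia\c{s}, Ch.~II.

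\textbf{Comparison of routes.} The classical argument works intrinsically, for an arbitrary minimal unitary dilation. The mutual orthogonality of $\mc H$, the spaces $U^n\mc L$ and the spaces $U^{*m}\mc L^*$ is computed from $P_{\mc H}U^n|_{\mc H}=T^n$. Completeness then comes from minimality, e.g.\ via the telescoping identity $U^nh-T^nh=\sum_{j=0}^{n-1}U^{n-1-j}(U-T)T^jh$.

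You instead invoke uniqueness of the minimal unitary dilation and read the first decomposition off Sch\"affer's matrix. This is shorter and shows $\mc L\cong\mc D_T$ and $\mc L^*\cong\mc D_{T^*}$ at once. The price is importing both the Sch\"affer construction and the uniqueness theorem, neither of which the intrinsic route needs.

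From the Wold step onward your argument is the standard one. This covers identifying $\mc K_+\ominus U\mc K_+$ with $\mc L_*$, and showing that $\mc K\ominus(M(\mc L)\vee M(\mc L_*))$ is a subspace of $\mc H$ reducing $U$ on which $U=T$. It also covers the orthogonality argument for \eqref{eq:R_subspace}.

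\textbf{Two steps need repair.}

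First, your reason for $\mc L_*\perp U\mc K_+$ is not an argument as written. Since $U^*U=I$, the expression $P_{\mc H}U^*U\mc K_+$ is just $\mc H$. The clean reason is $(I-UT^*)h=U(U^*-T^*)h$ with $(U^*-T^*)h\in\mc L^*\perp\mc K_+$. This also gives $\mc L_*=U\mc L^*$ outright, so the step you call delicate in the c.n.u.\ part is immediate.

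Second, and more substantively, your proof of \eqref{eq:L_intersect} only treats vectors of the form $(I-UT^*)h$. But $\mc L_*$ is a closure, and a vector of $\mc L\cap\mc L_*$ need not be of that form. Here is a fix:
\begin{enumerate}[\rm (a)]
\item Use your norm identity to extend $D_{T^*}h\mapsto(I-UT^*)h$ to a unitary $\omega_*:\mc D_{T^*}\to\mc L_*$.
\item Since $P_{\mc H}(I-UT^*)h=D_{T^*}(D_{T^*}h)$, continuity gives $P_{\mc H}\omega_*d=D_{T^*}d$ for all $d\in\mc D_{T^*}$.
\item If $\omega_*d\in\mc L\perp\mc H$, this forces $D_{T^*}d=0$.
\item Hence $d=0$, because $\mc D_{T^*}=(\ker D_{T^*})^{\perp}$.
\end{enumerate}
In the Sch\"affer model this is visible directly. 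There $\mc L_*=U\mc L^*$ consists of the vectors $D_{T^*}d\oplus(-T^*d)$, $d\in\mc D_{T^*}$, with the second entry in the $0$-th copy of $\mc D_T$, while $\mc L$ is that copy itself.
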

	
	A \emph{bounded analytic function} $\{ \mc{E}, \mc{E}_*, \Theta(z) \}$ between Hilbert spaces $\mc{E}$ and $\mc{E}_*$ is an operator-valued map $\Theta: \mb{D} \to B(\mc{E}, \mc{E}_*)$ defined by a uniformly bounded power series $\Theta(z) = \sum_{k=0}^{\infty} z^k \Theta_k$ on the open unit disc $\mb{D}$, where the series converges in the strong operator topology for every $z \in \mb{D}$. It is called \emph{contractive} if $\sup_{z \in \mb{D}} \|\Theta(z)\| \leq 1$, and a contractive analytic function $\{ \mc{E}, \mc{E}_*, \Theta(z) \}$ is called \emph{purely contractive} if $\|\Theta(0)e\| < \|e\|$ for all $e \in \mc{E} \setminus \{0\}$.
	
	Such a contractive analytic function $\{ \mc{E}, \mc{E}_*, \Theta(z) \}$ naturally induces the multiplication operators $\Theta_+ : H^2(\mc{E}) \to H^2(\mc{E}_*)$ and $\Theta : L^2(\mc{E}) \to L^2(\mc{E}_*)$, along with their corresponding defect operators
	\begin{gather*}
		(\Theta_+ u)(z) \coloneqq \Theta(z)u(z) \quad \text{for } u \in H^2(\mc{E}), \\
		(\Theta v)(t) \coloneqq \Theta(e^{it})v(t) \quad \text{for } v \in L^2(\mc{E}), \\
		\Delta_{\Theta}(t) \coloneqq (I_{\mc{E}} - \Theta^*(e^{it})\Theta(e^{it}))^{1/2}, \quad \Delta_{*,\Theta}(t) \coloneqq (I_{\mc{E}_*} - \Theta(e^{it})\Theta(e^{it})^*)^{1/2},
	\end{gather*}
	where the boundary values $\Theta(e^{it}) = \mathop{\text{SOT-}\lim}_{r \to 1^-} \Theta(re^{it})$ exist almost everywhere (a.e.) on $[0, 2\pi]$. One of the most important results, which relates contractions and contractive analytic functions, is given below:
	
	\begin{lemma}[Lemma 3.1, Chapter V, \cite{NFBK10}]\label{fourier_rep}
		Let $U$ and $U'$ be bilateral shifts on the complex separable Hilbert spaces $\mc{K}$ and $\mc{K}'$, with generating subspaces $\mc{G}$ and $\mc{G}'$, respectively. Let $Q: \mc{K} \longrightarrow \mc{K}'$ be a contraction such that
		\begin{align*}
			Q U &= U' Q, \\
			Q M_+(\mc{G}) &\sub M_+(\mc{G}').
		\end{align*}
		Then there exists a unique contractive analytic function $\{\mc{G}, \mc{G}', \Theta(z)\}$ such that
		\begin{equation}
			\Phi^{\mc{G}'} Q = \Theta \Phi^{\mc{G}},
		\end{equation}
		where $\Phi^{\mc{G}'}: M(\mc{G}') \to L^2(\mc{G}')$ and $\Phi^{\mc{G}}: M(\mc{G}) \to L^2(\mc{G})$ are the Fourier representations of $M(\mc{G}')$ and $M(\mc{G})$, respectively, defined by
		\[
		\Phi^{\mc{G}'}\left(\sum_{n=-\infty}^{\infty} (U')^n g_n'\right) \coloneqq \sum_{n=-\infty}^{\infty} e^{int} g_n', \quad \Phi^{\mc{G}}\left(\sum_{n=-\infty}^{\infty} U^n g_n\right) \coloneqq \sum_{n=-\infty}^{\infty} e^{int} g_n.
		\]
	\end{lemma}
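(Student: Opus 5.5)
The plan is to build $\Theta$ directly from the action of $Q$ on the generating subspace $\mc G$, then show it is contractive, and finally extend the intertwining identity by density.

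\textbf{Step 1 (Taylor coefficients).} For $g \in \mc G \sub M_+(\mc G)$ we have $Qg \in M_+(\mc G')$. So $Qg = \sum_{n\ge 0} U'^n g_n'$ with $g_n' = P_{\mc G'} U'^{*n} Q g$. Define operators $\Theta_n \in B(\mc G,\mc G')$ by
\[
\Theta_n g \coloneqq P_{\mc G'} U'^{*n} Q g .
\]
Then $\sum_{n\ge 0}\|\Theta_n g\|^2 = \|Qg\|^2 \le \|g\|^2$, so $\Theta(z)=\sum_{n\ge0} z^n\Theta_n$ converges strongly for $z\in\mb D$. In Fourier language, $\Phi^{\mc G'} Q g = \sum_{n\ge0} e^{int}\Theta_n g$.

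Using $QU=U'Q$, we get $\Phi^{\mc G'} Q U^m g = e^{imt}\,\Phi^{\mc G'} Q g$ for all $m\in\mb Z$. Hence for every trigonometric polynomial $p=\sum_{|m|\le N} e^{imt} g_m$ with $g_m\in\mc G$,
\[
\Phi^{\mc G'} Q (\Phi^{\mc G})^{-1} p = \sum_{m} e^{imt}\sum_{n\ge 0} e^{int}\Theta_n g_m .
\]
This is formally $\Theta\, p$.

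\textbf{Step 2 (contractivity).} Restrict to $H^2$. Set $Q_+ \coloneqq \Phi^{\mc G'} Q (\Phi^{\mc G})^{-1}|_{H^2(\mc G)}$, a contraction into $H^2(\mc G')$. For analytic polynomials $u$, point evaluation gives $(Q_+u)(z)=\Theta(z)u(z)$ for $z\in\mb D$, by an absolutely convergent rearrangement.

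Take the Szeg\H{o} kernels $k_z e'(t)=\sum_{n\ge0}\bar z^n e^{int}e'$. These satisfy $\langle f, k_z e'\rangle=\langle f(z),e'\rangle$. It follows that $Q_+^*(k_z e') = k_z\,\Theta(z)^* e'$, first checked against polynomials, which are dense. Since $\|k_z e'\|=(1-|z|^2)^{-1/2}\|e'\|$ and $Q_+^*$ is contractive, we obtain $\|\Theta(z)^*e'\|\le\|e'\|$. Thus $\{\mc G,\mc G',\Theta(z)\}$ is a contractive analytic function.

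\textbf{Step 3 (extension to $L^2$ and uniqueness).} By the standard Fatou-type theorem, the strong boundary values $\Theta(e^{it})$ exist a.e. and satisfy $\|\Theta(e^{it})\|\le1$. Multiplication by $\Theta(e^{it})$ is then a contraction $L^2(\mc G)\to L^2(\mc G')$. On $e^{imt}g$ it coincides with $\Phi^{\mc G'}Q(\Phi^{\mc G})^{-1}$: the boundary function of $\Theta(z)g$ has Fourier coefficients $\Theta_n g$, and both sides commute with multiplication by $e^{it}$. Two bounded operators agreeing on the dense set of trigonometric polynomials are equal, so $\Phi^{\mc G'}Q=\Theta\Phi^{\mc G}$.

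Uniqueness follows because any such $\Theta$ must have Fourier coefficients $P_{\mc G'}U'^{*n}Qg$, i.e.\ exactly the $\Theta_n$ above.

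\textbf{Main obstacle.} The delicate point is Step 3: identifying the $L^2$-boundary function of $\Theta(\cdot)g$ with the series $\sum e^{int}\Theta_n g$, and justifying the a.e.\ strong boundary limits in the operator-valued, possibly infinite-dimensional setting. I would handle this by working vectorwise, with $g$ and $e'$ fixed, using scalar $H^2$ theory for $\langle\Theta(z)g,e'\rangle$ and separability of the spaces.
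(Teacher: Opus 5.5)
The paper gives no proof of this lemma; it quotes it from \cite{NFBK10}. Your argument is a correct, self-contained version of the standard Sz.-Nagy--Foia\c{s} proof: coefficients $\Theta_n=P_{\mc G'}(U')^{*n}Q|_{\mc G}$, contractivity of $\Theta(z)$ from $\|Q\|\le 1$ tested on reproducing kernels, then identification on $L^2$ by density of trigonometric polynomials.

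The step you flag as the main obstacle is routine. $\Theta(re^{it})g=\sum_{n\ge0}r^ne^{int}\Theta_ng$ converges in $L^2(\mc G')$ to $\sum_{n\ge0}e^{int}\Theta_ng$ as $r\to1^-$, so along some sequence $r_j\to1$ it converges a.e. That limit must then agree with the a.e.\ strong radial limit $\Theta(e^{it})g$, and the same observation gives uniqueness.
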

	
	\begin{proposition}[Proposition 2.1, Chapter VI, \cite{NFBK10}]\label{functional_model}
		Let $T$ be a c.n.u.\ contraction acting on the Hilbert space $\mc{H}$. Suppose $(U, \mc{K})$ is the minimal unitary dilation of $T$, and define the corresponding defect spaces by $\mc{L} \coloneqq \ov{(U-T)\mc{H}}$ and $\mc{L}_{*} \coloneqq \ov{(I - UT^{*})\mc{H}}$. Then there exists a purely contractive analytic function $\{\mc{L}, \mc{L}_{*}, \Theta_{\mc L}(z)\}$ such that
		$$
		\Phi^{\mc{L}_{*}} P^{\mc{L}_{*}} f = \Theta_{\mc L} \Phi^{\mc{L}} f \quad \text{for all } f \in M(\mc{L}).
		$$
		The map $\Phi : \mc{K} \to \wh{\mc{K}} \coloneqq L^{2}(\mc{L}_{*}) \oplus \ov{\Delta_{\mc L} L^{2}(\mc{L})}$ is a unitary transformation, decomposed as $\Phi = \Phi^{\mc{L}_{*}} \oplus \Phi_{\mc R}$, where the unitary operator $\Phi_{\mc R} : \mc{K} \to \ov{\Delta_{\mc L} L^{2}(\mc{L})}$ is defined by
		$$
		\Phi_{\mc R}(I - P^{\mc{L}_{*}})f \coloneqq \Delta_{\mc L}\Phi^{\mc{L}}f \quad \text{for all } f \in M(\mc{L}).
		$$
		This transformation $\Phi$ yields the ``Fourier representation'' of $\mc{K}$. In this representation, the unitary operator $U$ acts as multiplication by $e^{it}$ on $\wh{\mc{K}}$, and the subspace $\mc{K}_{+}$ of $\mc{K}$ corresponds to
		$$
		\wh{\mc{K}}_{+} = H^{2}(\mc{L}_{*}) \oplus \ov{\Delta_{\mc L}L^{2}(\mc{L})}.
		$$
		Furthermore, the Hilbert space $\mc{H}$ and the contraction $T$ are unitarily equivalent to the subspace $\wh{\mc{H}}$ and the operator $\wh{T}$ acting on $\wh{\mc{H}}$, respectively, given by
		\begin{equation*}
			\wh{\mc{H}} \coloneqq \left[ H^{2}(\mc{L}_{*}) \oplus \ov{\Delta_{\mc L}L^{2}(\mc{L})} \right] \ominus \{\Theta_{\mc L}u \oplus \Delta_{\mc L}u : u \in H^{2}(\mc{L})\}
		\end{equation*}
		and
		\begin{equation*}
			\wh{T}^{*}(u \oplus v) = e^{-it}[u - u(0)] \oplus e^{-it}v,\quad u\oplus v \in \wh{\mc{H}}.
		\end{equation*}
		If the function $\Theta_{\mc L}(z)$ is \emph{inner} {\rm (i.e., $\|\Theta_{\mc L}(e^{it})e\| = \|e\|$ a.e. for all $e \in \mc L$)}, the representations for $\wh{\mc{K}}$, $\wh{\mc{H}}$, and $\wh{T}$ simplify to
		\begin{align}
			\wh{\mc{K}} &= L^{2}(\mc{L}_{*}), \quad \wh{\mc{H}} = H^{2}(\mc{L}_{*}) \ominus \Theta_{\mc L}H^{2}(\mc{L}), \label{eq:inner_K_H} \\
			(\wh{T}^{*}u)(z) &= \frac{1}{z}\bigl[u(z) - u(0)\bigr],\text{ for } u \in \wh{\mc{H}}, \; z \in \mathbb{D}. \label{eq:inner_T_star}
		\end{align}
	\end{proposition}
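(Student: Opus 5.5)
The plan is to obtain everything from the geometric decomposition in Theorem~\ref{geometry_of_K} together with Lemma~\ref{fourier_rep}. First I take $Q \coloneqq P^{\mc L_*}|_{M(\mc L)} : M(\mc L)\to M(\mc L_*)$, with $U|_{M(\mc L)}$ and $U|_{M(\mc L_*)}$ as the bilateral shifts with generating subspaces $\mc L$ and $\mc L_*$ (both are wandering by Theorem~\ref{geometry_of_K}).

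Since $M(\mc L_*)$ reduces $U$, the projection $P^{\mc L_*}$ commutes with $U$, so $QU=UQ$. The inclusion $QM_+(\mc L)\sub M_+(\mc L_*)$ is exactly \eqref{eq:P_subset}. Lemma~\ref{fourier_rep} then produces a unique contractive analytic function $\{\mc L,\mc L_*,\Theta_{\mc L}(z)\}$ with $\Phi^{\mc L_*}P^{\mc L_*}f=\Theta_{\mc L}\Phi^{\mc L}f$.

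To see that $\Theta_{\mc L}$ is purely contractive, I will evaluate it at $0$. By the lemma, $\Theta_{\mc L}(0)=P_{\mc L_*}|_{\mc L}$. Writing elements of $\mc L$ as $(U-T)h$ and using the identifications $\mc L\cong\mc D_T$ via $(U-T)h\mapsto D_Th$ and $\mc L_*\cong\mc D_{T^*}$ via $(I-UT^*)h\mapsto D_{T^*}h$, a direct inner-product computation in the decomposition of Theorem~\ref{geometry_of_K} should show that $\Theta_{\mc L}(0)$ corresponds to $-T|_{\mc D_T}$. Since $\|Tx\|<\|x\|$ for every nonzero $x\in\mc D_T$ (otherwise $D_Tx=0$), pure contractivity follows. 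I expect this identification to be the most computation-heavy step.

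Next I treat $\Phi_{\mc R}$. For $f\in M(\mc L)$,
\[
\|(I-P^{\mc L_*})f\|^2=\|f\|^2-\|\Theta_{\mc L}\Phi^{\mc L}f\|^2=\langle (I-\Theta_{\mc L}^*\Theta_{\mc L})\Phi^{\mc L}f,\Phi^{\mc L}f\rangle=\|\Delta_{\mc L}\Phi^{\mc L}f\|^2 .
\]
Hence $\Phi_{\mc R}$ is well defined and isometric on a set that is dense in $\mc R$ by \eqref{eq:R_subspace}. Its range $\Delta_{\mc L}L^2(\mc L)$ is dense in $\ov{\Delta_{\mc L}L^2(\mc L)}$, so $\Phi_{\mc R}$ extends to a unitary from $\mc R$ onto that space. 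Combined with \eqref{wold_unitary}, $\Phi=\Phi^{\mc L_*}\oplus\Phi_{\mc R}$ is unitary.

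The operator $U$ becomes multiplication by $e^{it}$: on the first component this holds by construction of the Fourier representation. On the second component it holds because $(I-P^{\mc L_*})$ commutes with $U$ and $\Phi^{\mc L}U=e^{it}\Phi^{\mc L}$. From $\mc K_+=M_+(\mc L_*)\oplus\mc R$ I get $\wh{\mc K}_+=H^2(\mc L_*)\oplus\ov{\Delta_{\mc L}L^2(\mc L)}$.

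For the model itself, I use $\mc K_+=\mc H\oplus M_+(\mc L)$, so $\mc H=\mc K_+\ominus M_+(\mc L)$. For $f\in M_+(\mc L)$ we have $\Phi f=\Theta_{\mc L}u\oplus\Delta_{\mc L}u$ with $u=\Phi^{\mc L}f\in H^2(\mc L)$, and as $f$ ranges over $M_+(\mc L)$, $u$ ranges over all of $H^2(\mc L)$; this gives $\wh{\mc H}$. Since $M_+(\mc L)$ is $U$-invariant, $\mc H$ is invariant under $U_+^*=P_{\mc K_+}U^*|_{\mc K_+}$ and $T^*=U_+^*|_{\mc H}$. Multiplying by $e^{-it}$ and projecting onto $\wh{\mc K}_+$ removes exactly the constant term $u(0)$ of the $H^2$ component and leaves the $\ov{\Delta_{\mc L}L^2}$ component untouched, which yields the formula for $\wh T^*$.

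In the inner case $\Delta_{\mc L}=0$, so the second summand vanishes. Then $\wh{\mc K}=L^2(\mc L_*)$, $\wh{\mc H}=H^2(\mc L_*)\ominus\Theta_{\mc L}H^2(\mc L)$, and $\wh T^*$ is the backward shift, as stated in \eqref{eq:inner_K_H} and \eqref{eq:inner_T_star}.
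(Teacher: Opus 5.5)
Your proposal is correct and follows essentially the same route as the Sz.-Nagy--Foia\c{s} proof that the paper cites without proof, and which the paper itself repeats in the proof of Theorem~\ref{k_r_invariant_theorem}: Lemma~\ref{fourier_rep} applied to $P^{\mathcal{L}_*}|_{M(\mathcal{L})}$, the norm identity defining $\Phi_{\mathcal R}$ on the dense set $(I-P^{\mathcal L_*})M(\mathcal L)$, and $\mathcal H=\mathcal K_+\ominus M_+(\mathcal L)$. The step you expected to be computation-heavy is immediate: since $\Theta_{\mathcal L}(0)=P_{\mathcal L_*}|_{\mathcal L}$, the equality $\|P_{\mathcal L_*}l\|=\|l\|$ forces $l\in\mathcal L\cap\mathcal L_*=\{0\}$ by \eqref{eq:L_intersect}, so your (correct) identification of $\Theta_{\mathcal L}(0)$ with $-T|_{\mathcal D_T}$ is not needed; that identification is just the zeroth-coefficient case of the coincidence $\Theta_{\mathcal L}\simeq\Theta_T$ proved in \cite{NFBK10}.
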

	

	\begin{definition} \label{def:k_regular}
		Let $\{\mathcal{E}, \mathcal{E}_*, \Theta(z)\}$ be a contractive analytic function that admits a factorization into $k$ contractive analytic functions $\{\mathcal{E}_i, \mathcal{E}_{i+1}, \Theta_i(z)\}$ for $i = 1, \dots, k$, where we identify the spaces as $\mathcal{E}_1 = \mathcal{E}$ and $\mathcal{E}_{k+1} = \mathcal{E}_*$. That is,
		\[
		\Theta(z) = \Theta_k(z) \Theta_{k-1}(z) \cdots \Theta_1(z) \quad \text{for all } z \in \mathbb{D}.
		\]
		We define the canonical isometry
		\[
		Z_k : \ov{\Delta_{\Theta} L^2(\mathcal{E}_1)} \longrightarrow \ov{\Delta_k L^2(\mathcal{E}_k)} \oplus \cdots \oplus \ov{\Delta_1 L^2(\mathcal{E}_1)}
		\]
		by
		\[
		Z_k(\Delta_{\Theta} f) \coloneqq \Delta_k \Theta_{k-1} \cdots \Theta_1 f \oplus \cdots \oplus \Delta_2 \Theta_1 f \oplus \Delta_1 f, \quad \text{for all } f \in L^2(\mathcal{E}_1),
		\]
		where the associated defect operators are defined as $\Delta_{\Theta} = (I - \Theta^* \Theta)^{1/2}$ and $\Delta_i = (I - \Theta_i^* \Theta_i)^{1/2}$ for $i = 1, \dots, k$. By continuity, $Z_k$ extends uniquely to a bounded isometric operator from the closure $\overline{\Delta_{\Theta} L^2(\mathcal{E}_1)}$ into the direct sum of the defect spaces $\overline{\Delta_k L^2(\mathcal{E}_k)} \oplus \cdots \oplus \overline{\Delta_1 L^2(\mathcal{E}_1)}$. This extension is also denoted by $Z_k$. The factorization $\Theta(z) = \Theta_k(z) \cdots \Theta_1(z)$ is said to be \emph{$k$-regular} if the extended isometry $Z_k$ is a unitary operator.
	\end{definition}
	
	\begin{remark}
		From Definition \ref{def_Z_k} and Definition \ref{def:k_regular}, it is evident that the $k$-regularity of the factorization $\Theta(z) = \Theta_k(z) \cdots \Theta_1(z)$ for contractive analytic functions is exactly the  $k$-regularity of the corresponding operator factorization $\Theta = \Theta_k \cdots \Theta_1$ acting on their respective $L^2$ spaces.
		
	\end{remark}

	\begin{proposition}\label{local_prop}
		Let $\{\mathcal{E}_i, \mathcal{E}_{i+1}, \Theta_i(z)\}$ be contractive analytic functions such that
		\[
		\Theta(z) = \Theta_k(z) \cdots \Theta_1(z), \quad z \in \mathbb{D}.
		\]
		Then $\Theta = \Theta_k \cdots \Theta_1$ is a $k$-regular factorization if and only if the  factorization
		\[
		\Theta(e^{it}) = \Theta_k(e^{it}) \cdots \Theta_1(e^{it})
		\]
		is $k$-regular for almost every $t \in [0,2\pi]$.
	\end{proposition}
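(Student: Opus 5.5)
The natural route is to reduce to the case $k=2$, where the corresponding statement is classical: Sz.-Nagy and Foia\c{s} prove in Chapter VII of \cite{NFBK10} that a factorization $\Theta=\Theta_2\Theta_1$ of contractive analytic functions is regular if and only if $\Theta(e^{it})=\Theta_2(e^{it})\Theta_1(e^{it})$ is a regular factorization of operators for a.e.\ $t$. Corollary \ref{cor3} then gives the bridge to general $k$ on both sides of the equivalence.

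On the function side, $\Theta=\Theta_k\cdots\Theta_1$ is $k$-regular if and only if each of the factorizations
\[
\Theta_{j+1}(\Theta_j\cdots\Theta_1),\quad j=1,\dots,k-1,
\]
is $2$-regular. This uses the preceding Remark: $k$-regularity of the analytic factorization is exactly $k$-regularity of the operator factorization of multiplication operators on $L^2$. Corollary \ref{cor3} applies verbatim to those operators.

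Pointwise, for a fixed $t$, Corollary \ref{cor3} applied to the operators $\Theta_i(e^{it})$ gives the analogous statement. The factorization $\Theta(e^{it})=\Theta_k(e^{it})\cdots\Theta_1(e^{it})$ is $k$-regular if and only if, for each $j$, $\Theta_{j+1}(e^{it})\cdot(\Theta_j(e^{it})\cdots\Theta_1(e^{it}))$ is $2$-regular.

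Each product $\Theta_j\cdots\Theta_1$ is itself a contractive analytic function, and its boundary value is a.e.\ the product of the boundary values. This holds because SOT limits of uniformly bounded products converge strongly along radii wherever each factor does. The classical $k=2$ result therefore applies to each pair $(\Theta_{j+1},\Theta_j\cdots\Theta_1)$. The $j$-th function-level $2$-regularity is equivalent to pointwise $2$-regularity off a null set $N_j$. Intersecting over the finitely many $j$, with exceptional set $\bigcup_j N_j$ still null, yields the equivalence.

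The main obstacle is being careful about the $k=2$ local criterion if one prefers not to cite it. A self-contained proof would identify $\overline{\Delta_\Theta L^2(\mathcal E_1)}$ as $L^2$-sections of the measurable field $t\mapsto\mathcal D_{\Theta(e^{it})}$, and similarly for each $\overline{\Delta_iL^2}$. One then observes that $Z_k$ acts as the decomposable operator $t\mapsto Z_k(t)$, the pointwise isometry built from the $\Theta_i(e^{it})$.

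A decomposable isometry between direct integrals is unitary iff a.e.\ fiber is unitary. The ``if'' direction is easy by fiberwise inversion. For ``only if'', I would use that the range of $Z_k$ is invariant under multiplication by $L^\infty$ scalars, since $Z_k$ commutes with them. Hence the range is the space of sections of a measurable subfield, which must then be the full field a.e.

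The delicate point is the measurability of the field of ranges; separability of the spaces $\mathcal E_i$ ensures this. Given the reduction above, however, I would simply invoke the classical theorem for $k=2$.
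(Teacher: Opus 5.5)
Your proof is correct, but it takes a different route from the paper. The paper does not reduce to $k=2$; it works directly with the $k$-fold isometry $Z_k$.

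\begin{itemize}
\item For the forward direction, the paper approximates $\bigoplus_i \Delta_i e_i$ by vectors $Z_k(\Delta_\Theta f_m)$. Here $e_i$ are constant sections running over countable dense subsets of $\mathcal{E}_i$. It then extracts a.e.\ convergent subsequences and takes the union of the countably many exceptional null sets.
\item For the converse, it takes $\bigoplus_i f_i$ orthogonal to the range of $Z_k$ and derives $\Theta_1^*\cdots\Theta_{k-1}^*\Delta_k f_k+\cdots+\Delta_1 f_1=0$ a.e. It concludes that $\bigoplus_i f_i(t)$ is orthogonal to the range of $Z_k(t)$, hence zero, for a.e.\ $t$.
\end{itemize}

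You instead use the fact that Corollary~\ref{cor3} holds for arbitrary factorizations of contractions. It therefore applies both to the multiplication operators on $L^2$ (via the Remark after Definition~\ref{def:k_regular}) and to the fibre operators $\Theta_i(e^{it})$. This turns the claim into $k-1$ instances of the classical $k=2$ local criterion from Chapter VII of \cite{NFBK10}, applied to the pairs $(\Theta_{j+1},\Theta_j\cdots\Theta_1)$. Your observation that the boundary value of a product is a.e.\ the product of the boundary values is exactly what matches the function-level and pointwise sides. The finite union of exceptional null sets is harmless.

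Your route is shorter and shows that the general-$k$ local criterion is a formal consequence of the $k=2$ case. This is the same pattern the paper itself uses for Proposition~\ref{equivalent_prop}. The paper's route is self-contained and does not depend on the exact form of the classical result. Its orthogonality argument for the converse also avoids fibrewise inversion and any measurable-subfield theory.

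Your fallback direct-integral sketch is essentially the paper's argument in abstract form. The paper's use of constant sections and a.e.\ convergent subsequences is what handles, concretely, the measurability point you flagged.
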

	\begin{proof}
		Let $\Theta = \Theta_k \cdots \Theta_1$ be a $k$-regular factorization; that is, the map
		\[ Z_k : \overline{\Delta_\Theta L^2(\mathcal{E})} \to \overline{\Delta_k L^2(\mathcal{E})} \oplus \cdots \oplus \overline{\Delta_1 L^2(\mathcal{E})} \]
		defined by
		\[ Z_k(\Delta_\Theta f) = \Delta_k \Theta_{k-1} \cdots \Theta_1 f \oplus \cdots \oplus \Delta_2 \Theta_1 f \oplus \Delta_1 f, \quad f \in L^2(\mathcal{E}), \]
		is a unitary map, which means
		\begin{equation} \label{eq1}
			\overline{\{ \Delta_k \Theta_{k-1} \cdots \Theta_1 f \oplus \cdots \oplus \Delta_2 \Theta_1 f \oplus \Delta_1 f : f \in L^2(\mathcal{E}) \}} = \bigoplus_{i=k}^1 \overline{\Delta_i L^2(\mathcal{E}_i)}.
		\end{equation}
		We claim that $Z_k(t) : \overline{\Delta_\Theta(t) \mathcal{E}} \to \bigoplus_{i=k}^1 \overline{\Delta_k(t) \mathcal{E}_i}$ defined by
		\[ Z_k(t)(\Delta_\Theta(t) e) = \Delta_k(t) \Theta_{k-1}(e^{it}) \cdots \Theta_1(e^{it}) e \oplus \cdots \oplus \Delta_2(t) \Theta_1(e^{it}) e \oplus \Delta_1(t) e \]
		is a unitary map, i.e.,
		\begin{equation} \label{eq2}
			\overline{\{ \Delta_k(t) \Theta_{k-1}(e^{it}) \cdots \Theta_1(e^{it}) e \oplus \cdots  \oplus \Delta_1(t) e: e\in\mc E \}} = \bigoplus_{i=k}^1 \overline{\Delta_i(t) \mathcal{E}_i}.
		\end{equation}
		Let $e_i \in \mathcal{E}_i$ be arbitrary constant vectors, and define the constant functions $f_i(t) = e_i$ in $L^2(\mathcal{E}_i)$ for $t \in \mathbb{T}$. By equation \eqref{eq1}, there exists a sequence of functions $\{f_m\}_{m=1}^\infty \subset L^2(\mathcal{E})$ such that, as $m \to \infty$,
		\[
		\Delta_k \Theta_{k-1} \cdots \Theta_1 f_m \oplus \cdots \oplus \Delta_1 f_m \longrightarrow \bigoplus_{i=k}^1 \Delta_i f_i \quad \text{in the norm of } \bigoplus_{i=k}^1 L^2(\mathcal{E}_i).
		\]
		It is a standard property of $L^2$ spaces that norm convergence implies the existence of a subsequence converging pointwise almost everywhere. Consequently, we can extract a subsequence $\{f_{m_j}\}_{j=1}^\infty$ and identify a subset $E_1 \subseteq \mathbb{T}$ of Lebesgue measure zero, such that for all $t \in \mathbb{T} \setminus E_1$, we obtain the pointwise limit
		\begin{align*}
			&\Delta_k(t) \Theta_{k-1}(e^{it}) \cdots \Theta_1(e^{it}) f_{m_j}(t) \oplus \cdots  \oplus \Delta_1(t) f_{m_j}(t) \longrightarrow \bigoplus_{i=k}^1 \Delta_i(t) e_i \quad \text{as } j \to \infty,
		\end{align*}
		where the convergence is in the norm topology of the direct sum space $\bigoplus_{i=k}^1 \mathcal{E}_i$; this implies that
		\[\bigoplus_{i=k}^1 \Delta_i(t) e_i\in   \overline{\{ \Delta_k(t) \Theta_{k-1}(e^{it}) \cdots \Theta_1(e^{it}) e \oplus \cdots \oplus \Delta_2(t) \Theta_1(e^{it}) e \oplus \Delta_1(t) e: e\in\mc E \}}\]
		Clearly, the set $E_1$ depends upon the choice of the vectors $e_1, \dots, e_k$.
		To eliminate the dependence on the chosen vectors, consider countable dense subsets
		$\{e_1^{j_1}\}_{j_1=1}^\infty, \dots, \{e_k^{j_k}\}_{j_k=1}^\infty$ of
		$\mathcal{E}_1, \dots, \mathcal{E}_k$, respectively.
		Let the vectors $e_1^{j_1}, \dots, e_k^{j_k}$ range over these sets, and let $E$ denote the union of the corresponding sets of Lebesgue measure zero. Then the set $E$ has Lebesgue measure zero, is independent of the choice of $j_1, \dots, j_k$, and for all $t \in \mathbb{T} \setminus E$ we have
		\[
		\bigoplus_{i=k}^1 \Delta_i(t) e_i^{j_i} \in
		\overline{\left\{
			\Delta_k(t) \Theta_{k-1}(e^{it}) \cdots \Theta_1(e^{it}) e
			\oplus \cdots \oplus
			\Delta_2(t) \Theta_1(e^{it}) e
			\oplus \Delta_1(t) e
			\right\}}.
		\]
		Therefore, we have
		\[ \overline{\{ \Delta_k(t) \Theta_{k-1}(e^{it}) \cdots \Theta_1(e^{it}) e \oplus \cdots \oplus \Delta_2(t) \Theta_1(e^{it}) e \oplus \Delta_1(t) e \}} = \bigoplus_{i=k}^1 \overline{\Delta_i(t) \mathcal{E}_i}. \]
		
		Conversely, assume that \eqref{eq2} holds. Let
		\[
		\bigoplus_{i=k}^1 f_i \in \bigoplus_{i=k}^1 \Delta_i L^2(\mathcal{E}_i)
		\]
		be such that
		\begin{align*}
			\left\langle \bigoplus_{i=k}^1 f_i,\,
			\Delta_k \Theta_{k-1} \cdots \Theta_1 f
			\oplus \cdots \oplus
			\Delta_2 \Theta_1 f
			\oplus \Delta_1 f \right\rangle
			= 0
		\end{align*}
		for all $f \in L^2(\mathcal{E})$. Then
		\begin{align*}
			\left\langle
			\Theta_1^* \cdots \Theta_{k-1}^* \Delta_k f_k
			+ \cdots + \Delta_1 f_1,\,
			f
			\right\rangle
			= 0
			\quad \text{for all } f \in L^2(\mathcal{E}),
		\end{align*}
		which implies
		\begin{align*}
			\Theta_1^* \cdots \Theta_{k-1}^* \Delta_k f_k
			+ \cdots + \Delta_1 f_1 = 0.
		\end{align*}
		Hence, for a.e.~$t \in [0,2\pi]$,
		\begin{align*}
			\Theta_1^*(e^{it}) \cdots \Theta_{k-1}^*(e^{it}) \Delta_k(t) f_k(t)
			+ \cdots + \Delta_1(t) f_1(t) = 0.
		\end{align*}
		Thus, for a.e.~$t \in [0,2\pi]$,
		\[
		\bigoplus_{i=k}^1 f_i(t) \perp
		\overline{\left\{
			\Delta_k(t) \Theta_{k-1}(e^{it}) \cdots \Theta_1(e^{it}) e
			\oplus \cdots \oplus
			\Delta_2(t) \Theta_1(e^{it}) e
			\oplus \Delta_1(t) e
			:\, e \in \mc{E}
			\right\}}.
		\]
		Since $f_i(t) \in \overline{\Delta_i(t)\mc{E}_i}$, it follows from \eqref{eq2} that
		\[
		\bigoplus_{i=k}^1 f_i(t) = 0 \quad \text{a.e.}
		\]
		and hence $f_i = 0$ a.e.\ for each $i = 1, \dots, k$. Therefore, \eqref{eq1} holds.
	\end{proof}


	\section{Main Theorem}
	The following theorem presents our main result and establishes the correspondence between chains of invariant subspaces and the $k$-regular factorizations of the characteristic function of a c.n.u.\ contraction.
	
	\begin{theorem}\label{k_r_invariant_theorem}
		Let $T \in B(\mc{H})$ be a c.n.u.\ contraction, and let $\{\mc{E}, \mc{E}_*, \wt{\Theta}(z)\}$ be a contractive analytic function which coincides with the characteristic function of $T$. Then the operator $T$ is unitarily equivalent to the operator $\wt{T}$ acting on the Hilbert space
		\[
		\wt{\mc{H}} \coloneqq \left[ H^2(\mc{E}_{*}) \oplus \overline{\wt{\Delta} L^2(\mc{E})} \right] \ominus \wt{\mc{G}},
		\]
		where
		\begin{gather*}
			\wt{T}^*(u \oplus v) \coloneqq e^{-it} (u - u(0)) \oplus e^{-it} v, \\
			\wt{\mc{G}} \coloneqq \{ \wt{\Theta} u \oplus \wt{\Delta} u :u \in H^2(\mc{E}) \}, \quad \wt{\Delta} \coloneqq (I - \wt{\Theta}^* \wt{\Theta})^{1/2}.
		\end{gather*}
		Assume further that $k \geq 2$ and that
		\[
		\mc{M}_1 \sub \dots \sub \mc{M}_{k-1}
		\]
		are invariant subspaces for $T$. Then there exists a $k$-regular factorization of the form
		\begin{equation}\label{k_regular_fact}
			\wt{\Theta}(z) = \wt{\Theta}_k(z) \cdots \wt{\Theta}_1(z) \quad \text{for } z \in \mathbb{D},
		\end{equation}
		where $\{\mc{E}_i, \mc{E}_{i+1}, \wt{\Theta}_i(z)\}$ are contractive analytic functions for $i=1,\dots,k$ with $\mc{E}_1 = \mc{E}$ and $\mc{E}_{k+1} = \mc{E}_*$. With respect to the $k$-regular factorization \eqref{k_regular_fact}, the corresponding invariant subspaces admit the following representations
		\begin{align*}
			\wt{\mc{M}}_1
			&= \Big\{
			\wt{\Theta}_k \cdots \wt{\Theta}_2 u_2
			\oplus
			\wt{Z}_k^*\big(
			\wt{\Delta}_k \wt{\Theta}_{k-1} \cdots \wt{\Theta}_2 u_2
			\oplus \dots
			\oplus \wt{\Delta}_2 u_2
			\oplus v_1
			\big) \\
			&\qquad :
			u_2 \in H^2(\mc{E}_2), \;
			v_1 \in \overline{\wt{\Delta}_1 L^2(\mc{E}_1)}
			\Big\}
			\ominus \wt{\mc{G}}, \\[1ex]
			&\vdots \\
			\wt{\mc{M}}_{i}
			&= \Big\{
			\wt{\Theta}_k \cdots \wt{\Theta}_{i+1} u_{i+1}
			\oplus
			\wt{Z}_k^*\big(
			\wt{\Delta}_k \wt{\Theta}_{k-1} \cdots \wt{\Theta}_{i+1} u_{i+1}
			\oplus \dots
			\oplus \wt{\Delta}_{i+1} u_{i+1}
			\oplus v_i
			\oplus \cdots \\
			&\qquad
			\oplus v_1
			\big)  :
			u_{i+1} \in H^2(\mc{E}_{i+1}), \;
			v_j \in \overline{\wt{\Delta}_j L^2(\mc{E}_j)}, \; j=1,\dots,i
			\Big\}
			\ominus \wt{\mc{G}}, \\
			&\vdots \\
			\wt{\mc{M}}_{k-1}
			&= \Big\{
			\wt{\Theta}_k u_k
			\oplus
			\wt{Z}_k^*\big(
			\wt{\Delta}_k u_k
			\oplus v_{k-1}
			\oplus \dots
			\oplus v_1
			\big) \\
			&\qquad :
			u_k \in H^2(\mc{E}_k), \;
			v_j \in \overline{\wt{\Delta}_j L^2(\mc{E}_j)}, \; j=1,\dots,k-1
			\Big\}
			\ominus \wt{\mc{G}}.
		\end{align*}
		Moreover, for $i = 1, \dots, k-1$, the orthogonal complement $\wt{\mc{N}}_i = \wt{\mc{H}} \ominus \wt{\mc{M}}_i$ is given by
		\begin{align*}
			\wt{\mc{N}}_i &= \left[ H^2(\mc{E}_*) \oplus \wt{Z}_k^* \left( \overline{\wt{\Delta}_k L^2(\mc{E}_{k})} \oplus \dots \oplus \overline{\wt{\Delta}_{i+1} L^2(\mc{E}_{i+1})} \oplus \{0\} \oplus \dots \oplus \{0\} \right) \right] \\
			&\quad \ominus \left\{ \wt{\Theta}_k \cdots \wt{\Theta}_{i+1} u_{i+1} \oplus \wt{Z}_k^* \big( \wt{\Delta}_k \wt{\Theta}_{k-1} \cdots \wt{\Theta}_{i+1} u_{i+1} \oplus \dots \oplus \wt{\Delta}_{i+1} u_{i+1} \right. \\
			&\qquad \left. \oplus 0 \oplus \dots \oplus 0 \big) :u_{i+1} \in H^2(\mc{E}_{i+1}) \right\}.
		\end{align*}
		Here,
		\[
		\wt{Z}_k :
		\overline{\wt{\Delta} L^2(\mc{E})}
		\longrightarrow
		\overline{\wt{\Delta}_k L^2(\mc{E}_k)}
		\oplus \dots \oplus
		\overline{\wt{\Delta}_1 L^2(\mc{E}_1)}
		\]
		is the isometry defined by
		\begin{equation*}
			\wt{Z}_k \wt{\Delta} v
			=
			\wt{\Delta}_k \wt{\Theta}_{k-1} \cdots \wt{\Theta}_1 v
			\oplus \dots \oplus
			\wt{\Delta}_2 \wt{\Theta}_1 v
			\oplus
			\wt{\Delta}_1 v,
			\qquad
			v \in L^2(\mc{E}).
		\end{equation*}
		Conversely, if
		\[
		\wt{\Theta}(z)
		=
		\wt{\Theta}_k(z)\cdots\wt{\Theta}_1(z)
		\]
		is a $k$-regular factorization and the subspaces
		$\wt{\mc{M}}_i$ and $\wt{\mc{N}}_i$ are defined as above,
		then each $\wt{\mc{M}}_i$ is invariant under $\wt{T}$, the collection $\{\wt{\mc{M}}_i\}_{i=1}^{k-1}$ forms an increasing chain
		\[
		\wt{\mc{M}}_1
		\sub \dots \sub
		\wt{\mc{M}}_{k-1},
		\]
		and for every $i=1,\dots,k-1$ we have the orthogonal decomposition
		\[
		\wt{\mc{H}}
		=
		\wt{\mc{M}}_i
		\oplus
		\wt{\mc{N}}_i.
		\]
	\end{theorem}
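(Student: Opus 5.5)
Our strategy is to reduce to the classical two-factor theory of Chapter VII of \cite{NFBK10} and use Corollary~\ref{cor3} and Proposition~\ref{propo_k_regular} to assemble the $k$-factor statement. The first assertion, that $T$ is unitarily equivalent to $\wt T$ on $\wt{\mc H}$, is Proposition~\ref{functional_model}. We transport along the unitary identifications $\mc L\cong\mc E$ and $\mc L_*\cong\mc E_*$ under which $\Theta_{\mc L}$ coincides with $\wt\Theta$. So we may work entirely in the model space $\wh{\mc K}$ with the minimal unitary dilation $U$ of $T$.

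\emph{Direct part.} Set $\mc M_0=\{0\}$ and $\mc M_k=\mc H$. For each $i$, $T|_{\mc M_i}$ has minimal isometric dilation living in $\mc K_+^{(i)}=\bigvee_{n\ge0}U^n\mc M_i$. Define the wandering subspaces
\[
\mc E_{i+1}\coloneqq \ov{(U-T_{i})\mc M_i}\ \text{-type spaces},
\]
or more precisely $\mc F_i=\mc K_+\ominus(\mc M_i\oplus U\mc K_+^{(i)}$-complement$)$, as in Section VII.1 of \cite{NFBK10}. This produces a chain of subspaces
\[
M_+(\mc L)\subseteq \mc K_+\ominus\mc M_1\ \cdots
\]
whose Wold-type decompositions give bilateral shifts with generating spaces $\mc G_1=\mc L,\ \mc G_2,\dots,\mc G_{k+1}\supseteq \mc L_*$. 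Then:
\begin{itemize}
\item Orthogonal projections $Q_i$ between consecutive spaces intertwine $U$ and map $M_+(\mc G_i)$ into $M_+(\mc G_{i+1})$.
\item Lemma~\ref{fourier_rep} therefore gives contractive analytic functions $\wt\Theta_i$ with $\Phi^{\mc G_{i+1}}Q_i=\wt\Theta_i\Phi^{\mc G_i}$.
\item Since the product of projections along the nested chain equals $P^{\mc L_*}|_{M(\mc L)}$, we get $\wt\Theta=\wt\Theta_k\cdots\wt\Theta_1$.
\end{itemize}
The simplest route to $k$-regularity is to group the factors. For each $j$, the single invariant subspace $\mc M_j$ yields, by the Sz.-Nagy--Foia\c{s} theorem (Theorem VII.1.1 of \cite{NFBK10}), the regular factorization $\wt\Theta=(\wt\Theta_k\cdots\wt\Theta_{j+1})(\wt\Theta_j\cdots\wt\Theta_1)$. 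We must check that the grouped factors constructed from the chain agree with the two-factor construction for $\mc M_j$; this holds because the construction depends only on $\mc M_j$ and products of the intermediate projections telescope. Proposition~\ref{propo_k_regular}(iv) then gives $k$-regularity.

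\emph{Representations of $\wt{\mc M}_i$ and $\wt{\mc N}_i$.} For fixed $i$, apply Theorem VII.1.1 to the regular factorization $\wt\Theta=\Theta''\Theta'$ with $\Theta''=\wt\Theta_k\cdots\wt\Theta_{i+1}$ and $\Theta'=\wt\Theta_i\cdots\wt\Theta_1$. That theorem gives $\wt{\mc M}_i$ and $\wt{\mc N}_i$ in terms of the two-factor unitary $Z_2$. By identity \eqref{deco_relation_zK}, $\wt Z_k=(Z''\oplus Z')Z_2$, where $Z''$ and $Z'$ are the unitaries of the $(k-i)$- and $i$-regular subfactorizations (unitary by Proposition~\ref{partition}). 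Substituting gives
\[
Z_2^*(\Delta''\,\cdot\oplus w)=\wt Z_k^*(Z''\Delta''\,\cdot\oplus Z'w).
\]
Here $Z''\Delta''u_{i+1}=\wt\Delta_k\wt\Theta_{k-1}\cdots u_{i+1}\oplus\cdots\oplus\wt\Delta_{i+1}u_{i+1}$. Moreover $Z'$ is onto $\ov{\wt\Delta_i L^2}\oplus\cdots\oplus\ov{\wt\Delta_1L^2}$, so $Z'w$ ranges over arbitrary $v_i\oplus\cdots\oplus v_1$. This yields exactly the displayed formulas for $\wt{\mc M}_i$ and $\wt{\mc N}_i$.

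\emph{Converse.} Given a $k$-regular factorization, Proposition~\ref{propo_k_regular} makes each grouped two-factor factorization regular. The converse part of Theorem VII.1.1 then shows that each $\wt{\mc M}_i$ is $\wt T$-invariant with $\wt{\mc H}=\wt{\mc M}_i\oplus\wt{\mc N}_i$, and the rewriting above shows it is given by our formulas. The chain property $\wt{\mc M}_i\subseteq\wt{\mc M}_{i+1}$ follows directly from the formulas. Take $u_{i+1}\in H^2(\mc E_{i+1})$ and set $u_{i+2}=\wt\Theta_{i+1}u_{i+1}\in H^2(\mc E_{i+2})$ and $v_{i+1}=\wt\Delta_{i+1}u_{i+1}$. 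Then the generating element for $\wt{\mc M}_i$ becomes a generating element for $\wt{\mc M}_{i+1}$, so the pre-$\ominus\wt{\mc G}$ sets are nested, hence so are the $\wt{\mc M}_i$.

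The main obstacle is the direct part: showing that the factors built from the whole chain are consistent, meaning that grouping them reproduces the Sz.-Nagy--Foia\c{s} factors attached to each individual $\mc M_j$ up to the identifications of generating spaces. My first attempt would be to avoid building the $\wt\Theta_i$ directly. Instead, obtain them inductively, taking the regular factorization $\wt\Theta=\Theta_2^{(1)}\Theta_1^{(1)}$ from $\mc M_{k-1}$, then applying the theorem to $T|_{\mc M_{k-1}}$, whose characteristic function is the pure part of $\Theta_1^{(1)}$, with its invariant subspace $\mc M_{k-2}$. Then combine using Proposition~\ref{propo_k_regular}(ii) and absorb the unitary constant part into neighbouring factors.
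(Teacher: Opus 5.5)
Your route is sound and differs from the paper's, provided the spaces are repaired as described in the second paragraph below.

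\textbf{Comparison of the two routes.} The paper never calls on the two-factor theorem. It Wold-decomposes all the spaces $\mc K_i=\mc M_i\oplus M_+(\mc L)$ at once and splits the unitary part as $\mc R=\mc W_k\oplus\cdots\oplus\mc W_1$. It then uses the c.n.u.\ density $\ov{P_{\mc R}M(\mc L)}=\mc R$ to build unitaries $\Phi_{\mc W_i}$. Finally it observes that $(\Phi_{\mc W_k}\oplus\cdots\oplus\Phi_{\mc W_1})\Phi_{\mc R}^{-1}$ is a unitary acting as the canonical $Z_k$, which gives $k$-regularity in one stroke. The formulas for $\mc M_i$ come from computing $\Phi(M_+(\mc F_i)\oplus\mc R_i)$ directly, and the converse is a direct check that $\wt T^*\wt{\mc N}_i\sub\wt{\mc N}_i$.

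You instead apply Theorem VII.1.1 of \cite{NFBK10} to each $\mc M_j$ separately to get $2$-regularity of every grouping. You then assemble these with Proposition~\ref{propo_k_regular}(iv)$\Rightarrow$(i), Proposition~\ref{partition} and \eqref{deco_relation_zK}. Your converse costs nothing beyond the classical converse and the rewriting $Z_2^*=\wt Z_k^*(Z''\oplus Z')$. This is more economical and shows the chain theorem is formally a consequence of the $k=2$ theorem plus the algebra of Section 2. However, it relies on the \emph{proof} of Theorem VII.1.1, not merely its statement: you need that the factorization attached to $\mc M_j$ is realized by $P^{\mc F_j}|_{M(\mc L)}$ and $P^{\mc L_*}|_{M(\mc F_j)}$. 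The paper's construction is self-contained and produces the individual unitaries $\Phi_{\mc W_i}$, which are reused for the model $\boldsymbol{\Phi}$ in Theorem~\ref{functional_model_1}.

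\textbf{Repairs needed in the direct part.} Your spaces are misdefined as written.
\begin{itemize}
\item The space $\bigvee_{n\ge0}U^n\mc M_i$ is the dilation space of $T|_{\mc M_i}$ and need not contain $M_+(\mc L)$.
\item The space $\mc K_+\ominus\mc M_1$ is not $U_+$-invariant.
\end{itemize}
The correct objects are $\mc K_i=\mc K_+\ominus(\mc H\ominus\mc M_i)$, $\mc F_i=\mc K_i\ominus U\mc K_i$ and $\mc R_i=\bigcap_{n\ge0}U^n\mc K_i$, and the last generating space is exactly $\mc L_*$, not merely a superspace of it.

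With these definitions, the consistency you call the main obstacle is immediate. The inclusions $\mc K_1\sub\cdots\sub\mc K_{k-1}\sub\mc K_+$ give $\mc R_1\sub\cdots\sub\mc R_{k-1}\sub\mc R$, hence
\[
M(\mc L)\sub M(\mc F_1)\oplus\mc R_1,\qquad M(\mc F_i)\sub M(\mc F_{i+1})\oplus\mc R_{i+1},\qquad M(\mc F_{k-1})\sub M(\mc L_*)\oplus\mc R .
\]
Since $M(\mc F_j)\perp\mc R_j$ and $M(\mc L_*)\perp\mc R$, every residual discarded along the way is killed by the later projections. Therefore $P^{\mc F_j}|_{M(\mc L)}=P^{\mc F_j}\cdots P^{\mc F_1}|_{M(\mc L)}$ and $P^{\mc L_*}|_{M(\mc F_j)}=P^{\mc L_*}P^{\mc F_{k-1}}\cdots P^{\mc F_{j+1}}|_{M(\mc F_j)}$. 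This is the same telescoping the paper uses for \eqref{eq:proj_L_star}, and you should write it out rather than assert it.

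Your inductive fallback is then unnecessary. As sketched, it is also incomplete: it would give $k$-regularity, but it does not say how to carry the formulas for $\mc M_i$, $i\le k-2$, from the model of $T|_{\mc M_{k-1}}$ into the model of $T$.
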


	\begin{proof}
		Let $\mc{M}_1 \sub \mc{M}_2 \sub \cdots \sub \mc{M}_{k-1}$ be a chain of invariant subspaces for a c.n.u.\ contraction $T$ acting on a Hilbert space $\mc{H}$. Let $(U, \mc{K})$ be the minimal unitary dilation of $T$, and define $\mc{K}_+ \coloneqq \bigvee_{n=0}^{\infty} U^n \mc{H}$. Then the restriction $U_+ \coloneqq U|_{\mc{K}_+}$ is the minimal isometric dilation of $T$, and we have the relation
		\begin{equation*}
			T^* = U_+^*|_{\mc{H}}.
		\end{equation*}
		Since $\mc{M}_1, \dots, \mc{M}_{k-1}$ are invariant subspaces for $T$, their orthogonal complements $\mc N_i \coloneqq \mc{H} \ominus \mc{M}_i$, for $i=1, \dots, k-1$, are invariant subspaces for $T^*$. Consequently, each $\mc N_i$ is invariant under $U_+^*$, which implies that the spaces $\mc{K}_i \coloneqq \mc{K}_+ \ominus \mc N_i = \mc{K}_+ \ominus (\mc{H} \ominus \mc{M}_i)$ are invariant under $U_+$. By applying the Wold decomposition to the isometry $U_+|_{\mc{K}_i}$, we obtain
		\begin{equation}\label{eq:wold_Ki}
			\mc{K}_i = M_+(\mc{F}_i) \oplus \mc{R}_i,
		\end{equation}
		where $U_+|_{M_+(\mc{F}_i)}$ is a unilateral shift with wandering subspace $\mc{F}_i \coloneqq \mc{K}_i \ominus U_+ \mc{K}_i$, and $U_+|_{\mc{R}_i}$ is unitary with residual space $\mc{R}_i \coloneqq \bigcap_{j=0}^{\infty} U_+^j \mc{K}_i$ for each $i=1, \dots, k-1$.
		
		The inclusion $\mc{M}_1 \sub \mc{M}_2 \sub \cdots \sub \mc{M}_{k-1}$ implies that $\mc N_1 \supseteq \mc N_2 \supseteq \dots \supseteq \mc N_{k-1}$, and thus $\mc{K}_1 \sub \mc{K}_2 \sub \cdots \sub \mc{K}_{k-1}$. Therefore, we have $M_+(\mc{F}_i) \oplus \mc{R}_i \sub M_+(\mc{F}_{i+1}) \oplus \mc{R}_{i+1}$ for $i=1, \dots, k-2$. We claim that $\mc{R}_i \sub \mc{R}_{i+1}$. Indeed, since $\mc{K}_i \sub \mc{K}_{i+1}$ and since $\mc{R}_i$ and $\mc{R}_{i+1}$ are the maximal reducing subspaces of $U_+|_{\mc{K}_i}$ and $U_+|_{\mc{K}_{i+1}}$, respectively, on which these restrictions act unitarily, the maximality of $\mc{R}_{i+1}$ implies that $\mc{R}_i \sub \mc{R}_{i+1}$. Hence, we obtain the chain $\mc{R}_1 \sub \mc{R}_2 \sub \dots \sub \mc{R}_{k-1}$.
		
		Define the orthogonal differences $\mc W_1 \coloneqq \mc{R}_1$ and $\mc W_i \coloneqq \mc{R}_i \ominus \mc{R}_{i-1}$ for $i=2, \dots, k-1$. Because $\mc{K}_i \sub \mc{K}_{i+1}$, we deduce that
		\begin{equation}\label{F_iF_i+1}
			M_+(\mc{F}_i) \sub M_+(\mc{F}_{i+1}) \oplus \mc W_{i+1} \quad \text{for } i=1, \dots, k-2.
		\end{equation}
		Consider the Wold decomposition of $U_+|_{\mc{K}_+}$, given by $\mc{K}_+ = M_+(\mc{L}_*) \oplus \mc{R}$, where $\mc{R} \coloneqq \bigcap_{n=0}^{\infty} U_+^n \mc{K}_+$. Let $\mc{R}_k \coloneqq \mc{R} \ominus \mc{R}_{k-1}$ and define $\mc W_k \coloneqq \mc{R}_k$. It follows that $\mc{R}$ admits the orthogonal decomposition
		\begin{equation}\label{eq:R_decomp}
			\mc{R} = \mc W_k \oplus \mc W_{k-1} \oplus \dots \oplus \mc W_1.
		\end{equation}
		Recall the geometric decomposition of $\mc{K}_+$ from Theorem \ref{geometry_of_K}
		\begin{equation}\label{eq:K_plus_decomp}
			\mc{K}_+ = \mc{H} \oplus M_+(\mc{L}) = M_+(\mc{L}_*) \oplus \mc{R}.
		\end{equation}
		Using the first decomposition of $\mc{K}_+$ together with \eqref{eq:wold_Ki}, we obtain that
		\begin{equation*}
			[\mc{H} \oplus M_+(\mc{L})] \ominus (\mc{H} \ominus \mc{M}_i) = M_+(\mc{F}_i) \oplus \mc{R}_i,
		\end{equation*}
		which yields
		\begin{equation*}
			M_+(\mc{L}) \oplus \mc{M}_i = M_+(\mc{F}_i) \oplus \mc{R}_i.
		\end{equation*}
		This immediately implies that for $i=1, \dots, k-1$
		\begin{equation}\label{eq:M_plus_L_inclusion}
			M_+(\mc{L}) \sub M_+(\mc{F}_i) \oplus \mc{R}_i.
		\end{equation}
		Similarly, using the second decomposition of $\mc{K}_+$  together with \eqref{eq:wold_Ki}, we obtain
		\begin{equation*}
			[M_+(\mc{L}_*) \oplus \mc{R}] \ominus (\mc{H} \ominus \mc{M}_i) = M_+(\mc{F}_i) \oplus \mc{R}_i,
		\end{equation*}
		which rearranges to
		\begin{equation*}
			M_+(\mc{F}_i) = [M_+(\mc{L}_*) \oplus (\mc{R} \ominus \mc{R}_i)] \ominus (\mc{H} \ominus \mc{M}_i).
		\end{equation*}
		This yields
		\begin{equation}\label{eq:M_plus_F}
			M_+(\mc{F}_i) \sub M_+(\mc{L}_*) \oplus (\mc{R} \ominus \mc{R}_i).
		\end{equation}
		By utilizing \eqref{eq:K_plus_decomp},\eqref{eq:M_plus_L_inclusion},  \eqref{F_iF_i+1} and \eqref{eq:M_plus_F}, we obtain the following sequence of inclusions
		\begin{align}
			M_+(\mc{L}) &\sub M_+(\mc{L}_*) \oplus \mc{R}\label{eq:in_0} \\
			M_+(\mc{L}) &\sub M_+(\mc{F}_1) \oplus \mc W_1,\label{eq:in_1} \\
			M_+(\mc{F}_i) &\sub M_+(\mc{F}_{i+1}) \oplus \mc W_{i+1} \quad \text{for } i=1, \dots, k-2,\label{eq:in_2} \\
			M_+(\mc{F}_{k-1}) &\sub M_+(\mc{L}_*) \oplus \mc W_k.\label{eq:in_3}
		\end{align}
		For any wandering subspace $\mc W$ for $U$, we know that $M(\mc W) = \bigvee_{n \geq 0} U^{-n} M_+(\mc W)$. Furthermore, $U^{-n}$ maps $\mc{R}_i$ and $\mc{R}$ onto themselves, and leaves each $\mc W_i$ invariant. Applying $U^{-n}$ to the inclusions \eqref{eq:in_0}--\eqref{eq:in_3} for $n \ge 0$, we obtain
		\begin{align}
			M(\mc{L}) &\sub M(\mc{L}_*) \oplus \mc{R}\label{eq:inc_0} \\
			M(\mc{L}) &\sub M(\mc{F}_1) \oplus \mc W_1, \label{eq:inc_1} \\
			M(\mc{F}_i) &\sub M(\mc{F}_{i+1}) \oplus \mc W_{i+1} \quad \text{for } i=1, \dots, k-2, \label{eq:inc_2} \\
			M(\mc{F}_{k-1}) &\sub M(\mc{L}_*) \oplus \mc W_k. \label{eq:inc_3}
		\end{align}
		
		Let $P^{\mc{L}}, P^{\mc{L}_*}, P^{\mc{F}_i}$ (for $i=1, \dots, k-1$), $P_{\mc{R}}, $ and $P_{\mc W_i}$ (for $i=1, \dots, k$) denote the orthogonal projections from $\mc{K}$ onto $M(\mc{L}), M(\mc{L}_*), M(\mc{F}_i), \mc{R}$, and $\mc W_i$, respectively. From  \eqref{eq:in_0}--\eqref{eq:inc_3}, we deduce the following projection relations
		\begin{align}
			P^{\mc{L}_*} M_+(\mc{L})
			&\sub M_+(\mc{L}_*),
			\label{eq:inclusion-F0} \\
			P^{\mc{F}_1} M_+(\mc{L})
			&\sub M_+(\mc{F}_1),
			\label{eq:inclusion-F1} \\
			P^{\mc{F}_{i+1}} M_+(\mc{F}_i)
			&\sub M_+(\mc{F}_{i+1}),
			\quad (i=1,\dots,k-2),
			\label{eq:inclusion-Fi} \\
			P^{\mc{L}_*} M_+(\mc{F}_{k-1})
			&\sub M_+(\mc{L}_*).
			\label{eq:inclusion-Lstar}
		\end{align}
		For an arbitrary vector $l \in M(\mc{L})$, using equation \eqref{eq:inc_1}, we can write $l = P^{\mc{F}_1}l \oplus P_{\mc{W}_1}l$. Iterating this orthogonal decomposition through the chain of inclusions given in equation \eqref{eq:inc_2}, we obtain
		\begin{align*}
			l &= P^{\mc{F}_2} P^{\mc{F}_1} l \oplus P_{\mc W_2} P^{\mc{F}_1} l \oplus P_{\mc W_1} l \\
			&= P^{\mc{F}_3} P^{\mc{F}_2} P^{\mc{F}_1} l \oplus P_{\mc W_3} P^{\mc{F}_2} P^{\mc{F}_1} l \oplus P_{\mc W_2} P^{\mc{F}_1} l \oplus P_{\mc W_1} l \\
			&\;\,\vdots \\
			&= P^{\mc{F}_{k-1}} \dots P^{\mc{F}_1} l \oplus P_{\mc W_{k-1}} P^{\mc{F}_{k-2}} \dots P^{\mc{F}_1} l \oplus \dots \oplus P_{\mc W_2} P^{\mc{F}_1} l \oplus P_{\mc W_1} l.
		\end{align*}
		Finally, since $M(\mc{F}_{k-1}) \sub M(\mc{L}_*) \oplus \mc W_k$, we arrive at the full decomposition
		\begin{equation*}
			l = P^{\mc{L}_*} P^{\mc{F}_{k-1}} \dots P^{\mc{F}_1} l \oplus P_{\mc W_k} P^{\mc{F}_{k-1}} \dots P^{\mc{F}_1} l \oplus \dots \oplus P_{\mc W_2} P^{\mc{F}_1} l \oplus P_{\mc W_1} l.
		\end{equation*}
		As $\mc{R} = \mc W_k \oplus \mc W_{k-1} \oplus \dots \oplus \mc W_1$, this yields explicit formulas for the projections onto $M(\mc{L}_*)$ and $\mc{R}$
		\begin{align}
			P^{\mc{L}_*} l &= P^{\mc{L}_*} P^{\mc{F}_{k-1}} \dots P^{\mc{F}_1} l, \label{eq:proj_L_star} \\
			P_{\mc{R}} l &= P_{\mc W_k} P^{\mc{F}_{k-1}} \dots P^{\mc{F}_1} l \oplus \dots \oplus P_{\mc W_2} P^{\mc{F}_1} l \oplus P_{\mc W_1} l. \label{eq:proj_R}
		\end{align}
		The inclusion relations \eqref{eq:inc_0}--\eqref{eq:inc_3} imply the following relations
		\begin{align*}
			P_{\mc R} l &= (I - P^{\mc L_*}) l,\quad l \in M(\mc{L})\\
			P_{\mc W_1} l &= (I - P^{\mc{F}_1}) l, \quad l \in M(\mc{L})\\
			P_{\mc W_{i+1}} f_i &= (I - P^{\mc{F}_{i+1}}) f_i \quad \text{for } i = 1, \dots, k-2,
			\; f_i \in M(\mc F_i), \\
			P_{\mc W_k} f_{k-1} &= (I - P^{\mc{L}_*}) f_{k-1} \quad f_{k-1} \in M(\mc F_{k-1}) .
		\end{align*}
		Furthermore, from  equation \eqref{eq:cnu_condition}, given in  Theorem \ref{geometry_of_K}, we have
		\begin{equation}\label{eq:closure_PR}
			\ov{P_{\mc{R}} M(\mc{L})} = \ov{P_{\mc{R}} (M(\mc{L}) \vee M(\mc{L}_*))} = \mc{R}.
		\end{equation}
		Using \eqref{eq:closure_PR}, \eqref{eq:R_decomp} and \eqref{eq:proj_R}, we deduce the following subspace identities
		\begin{gather}
			\ov{P_{\mc W_1} M(\mc{L})} = \mc W_1, \\
			\ov{P_{\mc W_{i+1}} P^{\mc{F}_i} \dots P^{\mc{F}_1} M(\mc{L})} = \mc W_{i+1} \quad \text{for } i=1, \dots, k-2, \\
			\ov{P_{\mc W_k} P^{\mc{F}_{k-1}} \dots P^{\mc{F}_1} M(\mc{L})} = \mc W_k.
		\end{gather}
		Since $M(\mc{L}) \sub M(\mc{F}_1) \oplus \mc W_1$, it follows that $P^{\mc{F}_1} M(\mc{L}) \sub M(\mc{F}_1)$. Consequently,
		\begin{equation*}
			\ov{P_{\mc W_2} P^{\mc{F}_1} M(\mc{L})} \sub \ov{P_{\mc W_2} M(\mc{F}_1)} \sub \mc W_2 \implies \ov{P_{\mc W_2} M(\mc{F}_1)} = \mc W_2.
		\end{equation*}
		Proceeding iteratively, we obtain
		\begin{gather}
			\ov{P_{\mc W_{i+1}} M(\mc{F}_i)} = \mc W_{i+1} \quad \text{for } i=1, \dots, k-2, \\
			\ov{P_{\mc W_k} M(\mc{F}_{k-1})} = \mc W_k.
		\end{gather}
		
		
		Let \( \Phi^{\mc L_*}, \Phi^{\mc L} \), and \( \Phi^{\mc F_i} \) denote the Fourier representations associated with the bilateral shift operators on \( M(\mc L_*), M(\mc L) \), and \( M(\mc F_i) \), respectively. We now introduce the following contraction operators
		\begin{align}
			P^{\mc{L}_*}\big|_{M(\mc{L})}
			&: M(\mc{L}) \longrightarrow M(\mc{L}_*),
			\label{eq:map-Lstar-from-L} \\
			P^{\mc{F}_1}\big|_{M(\mc{L})}
			&: M(\mc{L}) \longrightarrow M(\mc{F}_1),
			\label{eq:map-F1-from-L} \\
			P^{\mc{F}_{i+1}}\big|_{M(\mc{F}_i)}
			&: M(\mc{F}_i) \longrightarrow M(\mc{F}_{i+1}),
			\quad (i=1,\dots,k-2),
			\label{eq:map-Fi+1-from-Fi} \\
			P^{\mc{L}_*}\big|_{M(\mc{F}_{k-1})}
			&: M(\mc{F}_{k-1}) \longrightarrow M(\mc{L}_*).
			\label{eq:map-Lstar-from-Fk-1}
		\end{align}
		Since the subspaces $M(\mc{L})$, $M(\mc{L}_*)$, and $M(\mc{F}_i)$ reduce $U$, their associated orthogonal projections $P^{\mc{L}}$, $P^{\mc{L}_*}$, and $P^{\mc{F}_i}$ commute with $U$. Consequently, we have
		\begin{align}
			P^{\mc{L}_*}\big|_{M(\mc{L})} \, U\big|_{M(\mc{L})}
			= (P^{\mc{L}_*} U)\big|_{M(\mc{L})}  
			= (U P^{\mc{L}_*})\big|_{M(\mc{L})}
			= U\big|_{M(\mc{L}_*)} \, P^{\mc{L}_*}\big|_{M(\mc{L})}.
			\label{eq:intertwine-L}
		\end{align}
		Similarly, we have
		\begin{align}
			P^{\mc{F}_1}\big|_{M(\mc{L})} \, U\big|_{M(\mc{L})}
			&= U\big|_{M(\mc{F}_1)} \, P^{\mc{F}_1}\big|_{M(\mc{L})},
			\label{eq:intertwine-F1} \\
			P^{\mc{F}_{i+1}}\big|_{M(\mc{F}_i)} \, U\big|_{M(\mc{F}_i)}
			&= U\big|_{M(\mc{F}_{i+1})} \, P^{\mc{F}_{i+1}}\big|_{M(\mc{F}_i)},
			\quad (i=1,\dots,k-2),
			\label{eq:intertwine-Fi} \\
			P^{\mc{L}_*}\big|_{M(\mc{F}_{k-1})} \, U\big|_{M(\mc{F}_{k-1})}
			&= U\big|_{M(\mc{L}_*)} \, P^{\mc{L}_*}\big|_{M(\mc{F}_{k-1})}.
			\label{eq:intertwine-last}
		\end{align}
		By applying the Lemma \ref{fourier_rep} to the contractions given in \eqref{eq:map-Lstar-from-L}--\eqref{eq:map-Lstar-from-Fk-1}, all the required conditions follow from the equations \eqref{eq:inclusion-F0}--\eqref{eq:inclusion-Lstar} and \eqref{eq:intertwine-L}--\eqref{eq:intertwine-last}. Therefore, there exist contractive analytic functions $\{\mc{L}, \mc{L}_*, \Theta_{\mc{L}}(z)\}$, $\{\mc{L}, \mc{F}_1, \Theta_1(z)\}$, $\{\mc{F}_i, \mc{F}_{i+1}, \Theta_{i+1}(z)\}$ (for $i=1, \dots, k-2$), and $\{\mc{F}_{k-1}, \mc{L}_*, \Theta_k(z)\}$ such that
		\begin{align}
			\Phi^{\mc{L}_*} P^{\mc{L}_*} l &= \Theta_{\mc{L}} \Phi^\mc{L} l, \quad l \in M(\mc{L}), \label{eq:theta_L} \\
			\Phi^{\mc{F}_1} P^{\mc{F}_1} l &= \Theta_1 \Phi^\mc{L} l, \quad l \in M(\mc{L}), \\
			\Phi^{\mc{F}_{i+1}} P^{\mc{F}_{i+1}} f_i &= \Theta_{i+1} \Phi^{\mc{F}_i} f_i, \quad f_i \in M(\mc{F}_i), \\
			\Phi^{\mc{L}_*} P^{\mc{L}_*} f_{k-1} &= \Theta_k \Phi^{\mc{F}_{k-1}} f_{k-1}, \quad f_{k-1} \in M(\mc{F}_{k-1}).\label{eq:theta L_*}
		\end{align}
		Applying $\Phi^{\mc{L}_*}$ to \eqref{eq:proj_L_star} yields
		\begin{align*}
			\Phi^{\mc{L}_*} P^{\mc{L}_*} l &= \Phi^{\mc{L}_*} P^{\mc{L}_*} (\Phi^{\mc{F}_{k-1}})^* \Phi^{\mc{F}_{k-1}} P^{\mc{F}_{k-1}} \dots (\Phi^{\mc{F}_1})^* \Phi^{\mc{F}_1} P^{\mc{F}_1} l \\
			&= \Theta_k \Theta_{k-1} \dots \Theta_1 \Phi^\mc{L} l.
		\end{align*}
		Comparing this with \eqref{eq:theta_L}, we obtain the factorization
		\begin{equation}\label{theta_l_factorization}
			\Theta_{\mc{L}} = \Theta_k \Theta_{k-1} \dots \Theta_1.
		\end{equation}
		Using the relation \( \ov{P_{\mc R} M(\mc L)} = \mc R \), we define a unitary operator
		\(\Phi_{\mc R} : \mc R \longrightarrow \ov{\Delta_{\mc L} L^2(\mc L)}\) by
		\begin{equation*}
			\Phi_{\mc{R}}(P_{\mc{R}} l) = \Delta_{\mc{L}} \Phi^\mc{L} l, \quad \text{where } \Delta_{\mc{L}}(t) = [I - \Theta_{\mc{L}}(e^{it})^* \Theta_{\mc{L}}(e^{it})]^{1/2}.
		\end{equation*}
		To verify the isometry property, observe that
		\begin{align*}
			\|P_{\mc{R}} l\|^2 &= \|(I - P^{\mc{L}_*}) l\|^2 = \|\Phi^\mc{L} l\|^2 - \|\Phi^{\mc{L}_*} P^{\mc{L}_*} l\|^2 \\
			&= \|\Phi^\mc{L} l\|^2 - \|\Theta_{\mc{L}} \Phi^\mc{L} l\|^2 \\
			&= \frac{1}{2\pi} \int_0^{2\pi} \left[ \|(\Phi^\mc{L} l)(t)\|_{\mc{L}}^2 - \|\Theta_{\mc{L}}(e^{it})(\Phi^\mc{L} l)(t)\|_{\mc{L}_*}^2 \right] dt \\
			&= \frac{1}{2\pi} \int_0^{2\pi} \|\Delta_{\mc{L}}(t)(\Phi^\mc{L} l)(t)\|_{\mc{L}}^2 \, dt = \|\Delta_{\mc{L}} \Phi^\mc{L} l\|^2.
		\end{align*}
		Analogous reasoning provides the following unitary maps for the subspaces \(\mc W_1,\dots,\mc W_k\)
		\begin{align}
			\Phi_{\mc W_1} &: \mc W_1 \to \ov{\Delta_1 L^2(\mc{L})}, \quad \Phi_{\mc W_1}(P_{\mc W_1} l) = \Delta_1 \Phi^\mc{L} l. \label{eq:W_1}
		\end{align}
		For $i=1, \dots, k-2,$ we have
		\begin{align}
			\Phi_{\mc W_{i+1}} &: \mc W_{i+1} \to \ov{\Delta_{i+1} L^2(\mc{F}_i)}, \quad\Phi_{\mc W_{i+1}}(P_{\mc W_{i+1}} f_i) = \Delta_{i+1} \Phi^{\mc{F}_i} f_i \\
			\Phi_{\mc W_k} &: \mc W_k \to \ov{\Delta_k L^2(\mc{F}_{k-1})}, \quad\Phi_{\mc W_k}(P_{\mc W_k} f_{k-1}) = \Delta_k \Phi^{\mc{F}_{k-1}} f_{k-1},\label{eq:W_k}
		\end{align}
		where $\Delta_j(t) \coloneqq [I - \Theta_j^*(e^{it})\Theta_j(e^{it})]^{1/2}$ for $j=1, \dots, k$.
		Since $\mc{R} = \mc W_k \oplus \dots \oplus \mc W_1$, the operator defined by
		\begin{equation*}
			Z_k \coloneqq (\Phi_{\mc W_k} \oplus \dots \oplus \Phi_{\mc W_1}) \Phi_{\mc{R}}^{-1}
		\end{equation*}
		is a unitary map from $\ov{\Delta_{\mc{L}} L^2(\mc{L})}$ onto $\ov{\Delta_k L^2(\mc{F}_{k-1})} \oplus \dots \oplus \ov{\Delta_1 L^2(\mc{L})}$. Using \eqref{eq:proj_R}, we compute the action of $Z_k$ on elements of the form $\Delta_{\mc{L}} \Phi^{\mc{L}} l$ as follows
		\begin{align*}
			Z_k \Delta_{\mc{L}} \Phi^\mc{L} l &= (\Phi_{\mc W_k} \oplus \dots \oplus \Phi_{\mc W_1}) P_{\mc{R}} l \\
			&= \Phi_{\mc W_k} P_{\mc W_k} P^{\mc{F}_{k-1}} \dots P^{\mc{F}_1} l \oplus \dots \oplus \Phi_{\mc W_2} P_{\mc W_2} P^{\mc{F}_1} l \oplus \Phi_{\mc W_1} P_{\mc W_1} l \\
			&= \Delta_k \Theta_{k-1} \dots \Theta_1 \Phi^\mc{L} l \oplus \dots \oplus \Delta_2 \Theta_1 \Phi^\mc{L} l \oplus \Delta_1 \Phi^\mc{L} l.
		\end{align*}
		Because $\Phi^\mc{L} l$ is dense in $L^2(\mc{L})$, we can extend this to all $v \in L^2(\mc{L})$
		\begin{equation*}
			Z_k \Delta_{\mc{L}} v = \Delta_k \Theta_{k-1} \dots \Theta_1 v \oplus \dots \oplus \Delta_2 \Theta_1 v \oplus \Delta_1 v.
		\end{equation*}
		Since multiplication by \( e^{it} \) commutes with the defect operators \( \Delta_j \) and the operators \( \Theta_i \), it follows that it also commutes with \( Z_k \). We now define the unitary operator
		\[
		\Phi : \mc{K} \to \wh{\mc{K}} \coloneqq L^2(\mc{L}_*) \oplus \ov{\Delta_{\mc{L}} L^2(\mc{L})},
		\]
		by setting
		\begin{equation}\label{Phi_for_model}
			\Phi = \Phi^{\mc{L}_*} \oplus \Phi_{\mc{R}}
			= \Phi^{\mc{L}_*} \oplus Z_k^{*} (\Phi_{\mc W_k} \oplus \dots \oplus \Phi_{\mc W_1}).
		\end{equation}
		By virtue of Proposition \ref{functional_model}, the operator $\Phi$ maps the subspace $\mc{K}_+$ onto
		\[
		\wh{\mc{K}}_+ \coloneqq H^2(\mc{L}_*) \oplus \ov{\Delta_{\mc{L}} L^2(\mc{L})},
		\]
		carries the subspace $M_+(\mc{L})$ onto
		\begin{equation*}
			\wh{\mc G} \coloneqq \{\Theta_{\mc{L}} u \oplus \Delta_{\mc{L}} u : u \in H^2(\mc{L})\},
		\end{equation*}
		and, consequently, maps the subspace $\mc{H} = \mc{K}_+ \ominus M_+(\mc{L})$ onto
		\begin{equation*}
			\wh{\mc{H}} \coloneqq \wh{\mc{K}}_+ \ominus \wh{\mc G}.
		\end{equation*}
		Under this unitary transformation, the operator $T$ is unitarily equivalent to the model operator $\wh{T}$, whose adjoint is defined by
		\begin{equation}\label{model_operator}
			\wh{T}^*(u \oplus v) = e^{-it}[u - u(0)] \oplus e^{-it} v,\quad u \oplus v \in \wh{\mc{H}}.
		\end{equation}
		We now compute the images of the invariant subspaces $\mc{M}_i$ under the unitary transformation $\Phi$. Recall that
		\begin{equation*}
			\mc{M}_i = (M_+(\mc{F}_i) \oplus \mc{R}_i) \ominus M_+(\mc{L}) \quad \text{for } i=1, \dots, k-1.
		\end{equation*}
		Applying the unitary transformation $\Phi$, which preserves orthogonal direct sums and orthogonal complements, we obtain
		\begin{equation}\label{eq:phi_M2_decomp}
			\Phi(\mc M_i) = \Phi(M_+(\mc{F}_i) \oplus \mc R_i) \ominus \Phi(M_+(\mc{L})).
		\end{equation}
		Now, utilizing the relations \eqref{eq:inc_2} and \eqref{eq:inc_3}, we obtain
		\begin{align*}
			\Phi(M_+(\mc{F}_i))
			&= \Phi \Big\{ P^{\mc{F}_{i+1}} f_i\oplus P_{\mc{W}_{i+1}} f_i :\,
			f_i \in M_+(\mc{F}_i)
			\Big\} \\
			&= \Phi \Big\{
			P^{\mc{L}_*} P^{\mc{F}_{k-1}} \cdots P^{\mc{F}_{i+1}} f_i
			\oplus
			P_{\mc{W}_k}P^{\mc{F}_{k-1}} \cdots P^{\mc{F}_{i+1}} f_i
			\oplus \cdots
			\oplus
			P_{\mc{W}_{i+1}} f_i  \\
			&\qquad \qquad\qquad
			:\,
			f_i \in M_+(\mc{F}_i)
			\Big\} \\
			&=
			\Big\{
			\Theta_k \cdots \Theta_{i+1} u_i
			\oplus
			Z_k^{*}
			\big(
			\Delta_k \Theta_{k-1} \cdots \Theta_{i+1} u_i
			\oplus \cdots
			\oplus
			\Delta_{i+1} u_i   \\
			&\qquad
			\oplus
			\underbrace{0 \oplus \cdots \oplus 0}_{i \text{ terms}}
			\big)
			:\,
			u_i \in H^2(\mc{F}_i)
			\Big\}.
		\end{align*}
		where $u_i = \Phi^{\mc{F}_i} f_i \in H^2(\mc{F}_i)$. Since,
		by definition, $\mc R_i = \mc W_i \oplus\cdots \oplus \mc W_1$,  we have
		\begin{align*}
			\Phi(\mc R_i) &= \Phi(\mc W_i \oplus \mc W_{i-1} \oplus \cdots \oplus \mc W_1) \\
			&= Z_k^{*} \left( \underbrace{\{0\} \oplus \cdots \oplus \{0\}}_{k-i \text{ terms}} \oplus \ov{\im(\Delta_i)} \oplus \cdots \oplus \ov{\im(\Delta_1)} \right).
		\end{align*}
		Summing these orthogonal components, we establish the image of the combined space
		\begin{align*}
			\Phi(&M_+(\mc{F}_i) \oplus \mc R_i)\\
			&= \Big\{
			\Theta_k \cdots \Theta_{i+1} u_i \oplus Z_k^{*} \big(
			\Delta_k \Theta_{k-1} \cdots \Theta_{i+1} u_i
			\oplus \cdots \oplus \Delta_{i+1} u_i  
			\oplus v_i \oplus \cdots \oplus v_1
			\big)\notag \\    
			&\qquad :  u_i \in H^2(\mc{F}_i), \
			v_j \in \ov{\im(\Delta_j)}
			\text{ for } j=1,\dots,i
			\, \Big\}.
		\end{align*}
		Substituting these explicit forms back into equation \eqref{eq:phi_M2_decomp} and noting that $\Phi(M_+(\mc{L})) =  \{\Theta_{\mc{L}} u \oplus \Delta_{\mc{L}} u : u \in H^2(\mc{L})\}$, we arrive at the complete generalized expression for $\wh{\mc{M}}_i \coloneqq \Phi(\mc{M}_i)$:
		\begin{align}
			\wh{\mc{M}}_i
			= \Big\{ \,
			&\Theta_k \cdots \Theta_{i+1} u_i \oplus Z_k^{*} \big(
			\Delta_k \Theta_{k-1} \cdots \Theta_{i+1} u_i
			\oplus \cdots \oplus \Delta_{i+1} u_i
			\oplus v_i \oplus \cdots  \label{eq:phi_Mi_detailed}\\
			& \oplus v_1
			\big): u_i \in H^2(\mc{F}_i), \
			v_j \in \ov{\im(\Delta_j)}
			\text{ for } j=1,\dots,i
			\, \Big\} \notag\\
			&\ominus \Big\{
			\Theta_{\mc{L}} u \oplus \Delta_{\mc{L}} u
			: u \in H^2(\mc{L})
			\Big\}.\notag
		\end{align}
		In particular, for $i = k-1$, we get
		\begin{multline}\label{eq:phi_Mk_minus_1}
			\wh{\mc M}_{k-1} = \left\{ \Theta_k u_{k-1} \oplus Z_k^{*} (\Delta_k u_{k-1} \oplus v_{k-1} \oplus \cdots \oplus v_1) :
			u_{k-1} \in H^2(\mc{F}_{k-1}),\right.\\ \left. \ v_j \in \ov{\im( \Delta_j)} \text{ for } j=1,\dots,k-1 \right\} \ominus \left\{ \Theta_{\mc{L}} u \oplus \Delta_{\mc{L}} u : u \in H^2(\mc{L}) \right\}.
		\end{multline}
		Moreover, we evaluate the orthogonal complement $\wh{\mc N}_i$ as follows
		\begin{equation*}
			\begin{aligned}
				\wh{\mc N}_i &= \left[ \left( H^2(\mc L_*) \oplus \ov{\Delta_{\mc L} L^2(\mc L)} \right) \ominus \{ \Theta_{\mc L} u \oplus \Delta_{\mc L} u : u \in H^2(\mc L) \} \right] \\
				&\quad \ominus \left[ \left\{ \Theta_k \cdots \Theta_{i+1} u_i \oplus Z_k^{*}(\Delta_k\Theta_{k-1} \cdots \Theta_{i+1} u_i \oplus \cdots \oplus \Delta_{i+1} u_i \oplus v_i \oplus \cdots \oplus v_1) \right.\right. \\ &\qquad \left.\left.:
				u_i \in H^2(\mc F_i), \ v_j \in \ov{\im(\Delta_j)} \right\} \vphantom{Z_k^{*}} \ominus \{ \Theta_{\mc L} u \oplus \Delta_{\mc L} u : u \in H^2(\mc L) \} \right] \\
				&= \left[ H^2(\mc L_*) \oplus Z_k^{*} \left( \ov{\Delta_k L^2(\mc F_{k-1})} \oplus \cdots \oplus \ov{\Delta_{i+1} L^2(\mc F_i)} \oplus \underbrace{\{0\} \oplus \cdots \oplus \{0\}}_{i \text{ times}} \right) \right] \\
				&\quad \ominus \left\{ \Theta_k \cdots \Theta_{i+1} u_i \oplus Z_k^{*}(\Delta_k \Theta_{k-1} \cdots  \Theta_{i+1} u_i \oplus \cdots \oplus \Delta_{i+1} u_i \oplus 0 \oplus \cdots \oplus 0) \right. \\
				&\qquad \qquad\left.: u_i \in H^2(\mc F_i) \right\}.
			\end{aligned}
		\end{equation*}

		Recall from Chapter VI of \cite{NFBK10} that Sz.-Nagy and Foia\c{s} proved that $\Theta_{\mc L}$ (defined in \eqref{eq:theta_L}) coincides with the characteristic function $\Theta_T$ of the contraction $T$. Hence, $\wt{\Theta}$ coincides with $\Theta_{\mc L}$. Consequently, the $k$-regular factorization given by \eqref{theta_l_factorization} yields the corresponding $k$-regular factorization
		\begin{equation*}
			\wt {\Theta}(z) = \wt {\Theta}_k(z) \cdots \wt {\Theta}_1(z) \quad \text{for } z \in \mathbb{D},
		\end{equation*}
		where each \( \{\mc{E}_i, \mc{E}_{i+1}, \wt {\Theta}_i(z)\} \) is a contractive analytic function for \( i=1,\dots,k \), with initial space \( \mc{E}_1 = \mc{E} \) and final space \( \mc{E}_{k+1} = \mc{E}_* \). Moreover, the functional model pair \( (\wh{T}, \wh{\mc H}) \) corresponds to the functional model pair \( (\wt{T}, \wt{\mc H}) \), and for \( i=1,\dots,k-1 \), the subspaces \( \wh{\mc M}_i \) and \( \wh{\mc N}_i \) correspond to \( \wt{\mc M}_i \) and \( \wt{\mc N}_i \), respectively.
		
		Conversely, let $\wt{\Theta} = \wt{\Theta}_k \cdots \wt{\Theta}_1$ be a $k$-regular factorization, where each \( \{\mc{E}_i, \mc{E}_{i+1}, {\wt{\Theta}}_i(z)\} \) is a contractive analytic function for \( i=1,\dots,k \), with initial space \( \mc{E}_1 = \mc{E} \) and final space \( \mc{E}_{k+1} = \mc{E}_* \). For each \( i = 1, \dots, k-1 \), let the subspaces \( \wt{\mc M}_i \) and \( \wt{\mc N} _i\) be defined by
		\begin{align*}
			\wt{\mc{M}}_i
			&= \Big\{
			{\wt{\Theta}}_k \cdots {\wt{\Theta}}_{i+1} u_{i+1}
			\oplus
			\wt{Z}_k^*\big(
			\wt \Delta_k {\wt{\Theta}}_{k-1} \cdots {\wt{\Theta}}_{i+1} u_{i+1}
			\oplus \cdots
			\oplus \wt \Delta_{i+1} u_{i+1}
			\oplus v_i
			\oplus \cdots \\
			& \qquad
			\oplus v_1
			\big) :  
			u_{i+1} \in H^2(\mc{E}_{i+1}), \;
			v_j \in \overline{\wt \Delta_j L^2(\mc{E}_j)}, \; j=1,\dots,i
			\Big\}
			\ominus {\wt{\mc G}},\\
			\wt{\mc N}_i&= \left[ H^2(\mc{E}_*) \oplus \wt{Z}_k^* \left( \ov{\wt \Delta_k L^2(\mc{E}_{k})} \oplus \cdots \oplus \ov{\wt \Delta_{i+1} L^2(\mc{E}_{i+1})} \oplus \{0\} \oplus \cdots \oplus \{0\} \right) \right] \\
			&\quad \ominus \left\{  {\wt{\Theta}}_k \cdots  {\wt{\Theta}}_{i+1} u_{i+1} \oplus \wt{Z}_k^* ( \wt \Delta_k  {\wt{\Theta}}_{k-1} \cdots {\wt{\Theta}}_{i+1} u_{i+1} \oplus \cdots \oplus \wt \Delta_{i+1} u_{i+1}  \right. \\
			&  \qquad \left.\oplus 0 \oplus \cdots \oplus 0 )  : u_{i+1} \in H^2(\mc{E}_{i+1}) \right\}.
		\end{align*}
		Since $\wt{Z}_k$ is unitary, we define the subspace $\wt{\mc G}_i$ by
		\begin{align*}
			\wt{\mc G}_i &\coloneqq \left\{ \wt{\Theta}_k \cdots \wt{\Theta}_{i+1} u_{i+1} + \wt{Z}_k^{*} ( \wt \Delta_k \wt{\Theta}_{k-1} \cdots \wt{\Theta}_{i+1} u_{i+1} \oplus \cdots \oplus \wt \Delta_{i+1} u_{i+1} \oplus v_i \oplus \cdots \right. \\
			&\qquad \left. \oplus v_1 )  : u_{i+1} \in H^2(\mc{E}_{i+1}), \ v_j \in \ov{\im(\wt \Delta_j)}, \ j=1,\dots,i \right\} \\
			&\supseteq \left\{ \wt{\Theta}_k \cdots \wt{\Theta}_1 u \oplus \wt{Z}_k^{*} ( \wt \Delta_k \wt{\Theta}_{k-1} \cdots \wt{\Theta}_1 u \oplus \cdots \oplus \wt \Delta_{i+1} \wt{\Theta}_i \cdots \wt{\Theta}_1 u \oplus \cdots \oplus \wt \Delta_1 u ) \right. \\
			&\qquad \left. : u \in H^2(\mc{E}) \right\} \\
			&= \{ \wt{\Theta} u \oplus \wt \Delta u : u \in H^2(\mc{E}) \} = \wt{\mc G}.
		\end{align*}
		It follows that $ \wt{\mc G}_i \supseteq \wt{\mc G},$ and we have $ \wt{\mc M}_i = \wt{\mc G}_i \ominus \wt{\mc G}.$
		Observe that
		\begin{align*}
			&(H^2(\mc{E}_*) \oplus \ov{\wt \Delta L^2(\mc{E})}) \ominus \wt{\mc G}_i \\
			&= \left[ H^2(\mc{E}_*) \oplus \ov{\wt \Delta (L^2(\mc{E}))} \right] \ominus \left\{ \wt{\Theta}_k \cdots \wt{\Theta}_{i+1} u_{i+1} + \wt{Z}_k^{*} ( \wt \Delta_k \wt{\Theta}_{k-1} \cdots \wt{\Theta}_{i+1} u_{i+1} \oplus \cdots \right. \\
			&\qquad \left.  \oplus \wt \Delta_{i+1} u_{i+1} \oplus v_i \oplus \cdots \oplus v_1 ) : u_{i+1} \in H^2(\mc{E}_{i+1}), \ v_j \in \ov{\im(\wt \Delta_j)}, \ j=1,\dots,i \right\} \\
			&= \left[ H^2(\mc{E}_*) \oplus \wt{Z}_k^{*} \left( \ov{\wt \Delta_k L^2(\mc{E}_{k})} \oplus \cdots \oplus \ov{\wt \Delta_{i+1} L^2(\mc{E}_{i+1})} \oplus \{0\} \oplus \cdots \oplus \{0\} \right) \right] \\
			&\quad \ominus \left\{ \wt{\Theta}_k \cdots \wt{\Theta}_{i+1} u_{i+1} \oplus \wt{Z}_k^{*} ( \wt \Delta_k \wt{\Theta}_{k-1} \cdots \wt{\Theta}_{i+1} u_{i+1} \oplus \cdots \oplus \wt \Delta_{i+1} u_{i+1} \oplus 0 \oplus \cdots \oplus 0 ) \right. \\
			&\qquad \left. : u_{i+1} \in H^2(\mc{E}_{i+1}) \right\} \\
			&= \wt{\mc N}_i.
		\end{align*}
		Hence we have $\wt{\mc M}_i \oplus \wt{\mc N}_i = (H^2(\mc{E}_*) \oplus \ov{\wt \Delta L^2(\mc{E})})\ominus\wt{\mc G}=\wt{\mc H}$.
		We shall prove that the subspaces $\wt{\mc M}_1, \dots, \wt{\mc M}_{k-1}$ are invariant under $\wt{T}$. To this end, it is sufficient to prove that $\wt{T}^*(\wt{\mc N}_i) \sub \wt{\mc N}_i$ for $i = 1, \dots, k-1$, where the orthocomplements are given by
		\begin{align*}
			\wt{\mc N}_i &= \left[ H^2(\mc{E}_*) \oplus \wt{Z}_k^{*} \left( \ov{\wt \Delta_k L^2(\mc{E}_{k})} \oplus \cdots \oplus \ov{\wt \Delta_{i+1} L^2(\mc{E}_{i+1})} \oplus \{0\} \oplus \cdots \oplus \{0\} \right) \right] \\
			&\quad \ominus \left\{ \wt{\Theta}_k \cdots \wt{\Theta}_{i+1} u_{i+1} \oplus \wt{Z}_k^{*} ( \wt \Delta_k \wt{\Theta}_{k-1} \cdots \wt{\Theta}_{i+1} u_{i+1} \oplus \cdots \oplus \wt \Delta_{i+1} u_{i+1} \oplus 0 \oplus \cdots \right. \\
			&\qquad \left. \quad \oplus 0 )  : u_{i+1} \in H^2(\mc{E}_{i+1}) \right\}
		\end{align*}
		If $x \in \wt{\mc N}_i$, there exist elements $u \in H^2(\mc E_*)$ and $v_j \in \ov{\im(\wt \Delta_j)}$ for $j = i+1, \dots, k$ such that
		\begin{equation*}
			x = u \oplus \wt{Z}_k^{*}(v_k \oplus \dots \oplus v_{i+1} \oplus 0 \oplus \dots \oplus 0),
		\end{equation*}
		satisfying the orthogonality condition for all $u_{i+1} \in H^2(\mc E_{i+1})$
		\begin{align*}
			\langle u \oplus \wt{Z}_k^{*}(v_k \oplus \dots \oplus v_{i+1} \oplus 0 \oplus \cdots \oplus 0), \ \wt{\Theta}_k \cdots \wt{\Theta}_{i+1} u_{i+1} \oplus \wt{Z}_k^{*}(\wt \Delta_k \wt{\Theta}_{k-1} \cdots\\ \wt{\Theta}_{i+1} u_{i+1} \oplus \dots \oplus \wt \Delta_{i+1} u_{i+1} \oplus 0 \oplus \cdots \oplus 0) \rangle = 0.
		\end{align*}
		Evaluating this inner product yields
		\begin{equation*}
			\langle \wt{\Theta}_{i+1}^* \cdots \wt{\Theta}_k^* u + \wt{\Theta}_{i+1}^* \cdots \wt{\Theta}_{k-1}^* \wt \Delta_k v_k + \dots + \wt \Delta_{i+1} v_{i+1}, \ u_{i+1} \rangle = 0.
		\end{equation*}
		Since this equality holds for all $u_{i+1} \in H^2(\mc E_{i+1})$, it implies that
		\begin{equation}\label{eq:orthogonality_i}
			\wt{\Theta}_{i+1}^* \cdots \wt{\Theta}_k^* u + \wt{\Theta}_{i+1}^* \cdots \wt{\Theta}_{k-1}^* \wt \Delta_k v_k + \dots + \wt \Delta_{i+1} v_{i+1} \perp H^2(\mc E_{i+1}).
		\end{equation}
		Applying $\wt{T}^*$ to $x$ via the model operator definition, we have
		\begin{align*}
			\wt{T}^*(x) &= e^{-it}(u - u(0)) \oplus e^{-it} \wt{Z}_k^{*}(v_k \oplus \dots \oplus v_{i+1} \oplus 0 \oplus \dots \oplus 0) \\
			&= u^\prime \oplus \wt{Z}_k^{*}(v_k^\prime \oplus \dots \oplus v_{i+1}^\prime \oplus 0 \oplus \dots \oplus 0),
		\end{align*}
		where $u^\prime(e^{it}) = e^{-it}(u(e^{it}) - u(0))$ and $v_j^\prime = e^{-it} v_j$ for $j = i+1, \dots, k$.
		To show $\wt{T}^*(x) \in \wt{\mc N}_i$, we verify that the components of $\wt{T}^*(x)$ satisfy the orthogonality condition \eqref{eq:orthogonality_i}
		\begin{align*}
			\wt{\Theta}_{i+1}^* \cdots \wt{\Theta}_k^* u^\prime &+ \wt{\Theta}_{i+1}^* \cdots \wt{\Theta}_{k-1}^* \wt \Delta_k v_k^\prime + \dots + \wt \Delta_{i+1} v_{i+1}^\prime \\
			&= e^{-it} \left( \wt{\Theta}_{i+1}^* \cdots \wt{\Theta}_k^* u + \wt{\Theta}_{i+1}^* \cdots \wt{\Theta}_{k-1}^* \wt \Delta_k v_k + \dots + \wt \Delta_{i+1} v_{i+1} \right)\\
			&\qquad - e^{-it} (\wt{\Theta}_{i+1}^* \cdots \wt{\Theta}_k^*)(e^{it}) u(0).
		\end{align*}
		By \eqref{eq:orthogonality_i}, the first term belongs to $L^2 \ominus H^2$, a property preserved under multiplication by $e^{-it}$. Since the second term involving $u(0)$ also belongs to $L^2 \ominus H^2$, the entire expression is orthogonal to $H^2(\mc E_{i+1})$.
		Thus, $\wt{T}^*(\wt{\mc N}_i) \sub \wt{\mc N}_i$ for any $i \in \{1, \dots, k-1\}$, confirming that the $\wt{\mc M}_i$ are invariant subspaces for $\wt{T}$.
		Finally, we demonstrate the nesting $\wt{\mc M}_1 \sub \wt{\mc M}_2 \sub \dots \sub \wt{\mc M}_{k-1}$. For any  $i \in \{1, \dots, k-2\}$, recall that
		\begin{multline*}
			\wt{\mc M}_{i+1} = \left\{ \wt{\Theta}_k \cdots \wt{\Theta}_{i+2} u_{i+2} \oplus \wt{Z}_k^{*} (\wt \Delta_k \wt{\Theta}_{k-1} \cdots \wt{\Theta}_{i+2} u_{i+2} \oplus \dots \oplus \wt \Delta_{i+2} u_{i+2} \oplus v_{i+1} \right. \\
			\left. \oplus  \dots \oplus v_1)  : u_{i+2} \in H^2(\mc{E}_{i+2}), \ v_j \in \ov{\im(\wt \Delta_j)} \right\} \ominus \wt{\mc G}.
		\end{multline*}
		By choosing $u_{i+2} = \wt{\Theta}_{i+1} u_{i+1}$, it is clear that the elements generating $\wt{\mc M}_i$ are contained within $\wt{\mc M}_{i+1}$. Hence, the inclusion $\wt{\mc M}_i \sub \wt{\mc M}_{i+1}$ follows directly.
	\end{proof}
	
	
	In the following theorem, we will construct a functional model for a given $k$-regular factorization of a purely contractive analytic function. Corresponding to this regular $k$-regular factorization, we will characterize the invariant subspaces of the associated model operator.

	\begin{theorem}\label{functional_model_1}
		Let \( \{\mathcal{E}, \mathcal{E}_*, \Theta(z)\} \) be a purely contractive analytic function that coincides with the characteristic function of a c.n.u.\ contraction \( T \in B(\mathcal{H}) \).
		Assume that \( k \geq 2 \) and that
		\begin{equation}\label{k_regular_fact2}
			\Theta(z) = \Theta_k(z) \cdots \Theta_1(z),
			\qquad z \in \mathbb{D},
		\end{equation}
		is a $k$-regular factorization, where
		\( \{\mc{E}_i, \mc{E}_{i+1}, \Theta_i(z)\} \)
		are contractive analytic functions for \( i = 1, \dots, k \),
		with \( \mc{E}_1 = \mc{E} \) and \( \mc{E}_{k+1} = \mc{E}_* \).
		Then the operator \( T \) is unitarily equivalent to the operator \( \boldsymbol{T} \),
		which is defined on the Hilbert space
		\[
		\boldsymbol{\mc{H}}
		\coloneqq
		\Bigl[
		H^2(\mc{E}_{k+1})
		\oplus
		\overline{\Delta_k L^2(\mc{E}_k)}
		\oplus \cdots \oplus
		\overline{\Delta_1 L^2(\mc{E}_1)}
		\Bigr]
		\ominus \boldsymbol{\mc G},
		\]
		where
		\[
		\boldsymbol{T}^*(u \oplus v_k \oplus \cdots \oplus v_1)
		=
		e^{-it}\bigl[u- u(0)\bigr]
		\oplus
		e^{-it} v_k
		\oplus \cdots \oplus
		e^{-it} v_1,
		\]
		and
		\( \boldsymbol{\mc G}
		=
		\left\{
		\Theta_k \cdots \Theta_1 u
		\oplus
		\Delta_k \Theta_{k-1} \cdots \Theta_1 u
		\oplus \cdots \oplus
		\Delta_1 u
		:
		u \in H^2(\mc{E}_1)
		\right\},~ \Delta_i =(I - \Theta_i^* \Theta_i)^{1/2}
		.\)
		Moreover, for \( i = 1, \dots, k-1 \), under the $k$-regular factorization \eqref{k_regular_fact2}, the invariant subspaces are described by
		\begin{align*}
			\boldsymbol{\mc{M}}_i
			=
			\Big\{
			&\Theta_k \cdots \Theta_{i+1} u_{i+1}
			\oplus
			\Delta_k \Theta_{k-1} \cdots \Theta_{i+1} u_{i+1}
			\oplus \cdots \oplus
			\Delta_{i+1} u_{i+1} \\
			&\oplus v_i \oplus \cdots \oplus v_1
			:
			u_{i+1} \in H^2(\mc{E}_{i+1}),
			\;
			v_j \in \overline{\Delta_j L^2(\mc{E}_j)},
			\ j = 1, \dots, i
			\Big\}
			\ominus \boldsymbol{\mc G},
		\end{align*}
		and
		\[
		\boldsymbol{\mc{M}}_1 \sub \cdots \sub \boldsymbol{\mc{M}}_{k-1}.
		\]
	\end{theorem}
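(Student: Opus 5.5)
The plan is to transport the model of Theorem~\ref{k_r_invariant_theorem} through the unitary $Z_k$ supplied by $k$-regularity. Take $\wt{\Theta}=\Theta$ in Theorem~\ref{k_r_invariant_theorem}. Then $T$ is unitarily equivalent to $\wt T$ on
\[
\wt{\mc H}=\bigl[H^2(\mc E_*)\oplus\ov{\Delta L^2(\mc E)}\bigr]\ominus\wt{\mc G},\qquad \Delta=(I-\Theta^*\Theta)^{1/2}.
\]
Since the factorization \eqref{k_regular_fact2} is $k$-regular, the map $Z_k:\ov{\Delta L^2(\mc E)}\to\bigoplus_{i=k}^1\ov{\Delta_iL^2(\mc E_i)}$ of Definition~\ref{def:k_regular} is unitary. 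I will then use the unitary
\[
W\coloneqq I_{H^2(\mc E_*)}\oplus Z_k:\ H^2(\mc E_*)\oplus\ov{\Delta L^2(\mc E)}\longrightarrow H^2(\mc E_{k+1})\oplus\bigoplus_{i=k}^1\ov{\Delta_iL^2(\mc E_i)}.
\]

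First, I compute $W\wt{\mc G}$. By the defining formula $Z_k\Delta u=\Delta_k\Theta_{k-1}\cdots\Theta_1u\oplus\cdots\oplus\Delta_1u$, valid for $u\in H^2(\mc E)\subset L^2(\mc E)$, we get $W(\Theta u\oplus\Delta u)=\Theta_k\cdots\Theta_1u\oplus\Delta_k\Theta_{k-1}\cdots\Theta_1u\oplus\cdots\oplus\Delta_1u$. Hence $W\wt{\mc G}=\boldsymbol{\mc G}$. Because $W$ is unitary and maps $H^2(\mc E_*)\oplus\ov{\Delta L^2(\mc E)}$ onto the ambient space of $\boldsymbol{\mc H}$, it follows that $W\wt{\mc H}=\boldsymbol{\mc H}$.

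Second, I check the intertwining $W\wt T^*=\boldsymbol T^*W$. The key point is that multiplication by $e^{-it}$ commutes with $Z_k$. This holds because $e^{\pm it}$ commutes with every multiplication operator $\Theta_i$, $\Delta_i$ and $\Delta$, so the identity holds on the dense set $\Delta L^2(\mc E)$ and extends by continuity. On the first coordinate, $W$ acts as the identity, so $u\mapsto e^{-it}[u-u(0)]$ is unchanged. Therefore $\boldsymbol T^*=W\wt T^*W^*|_{\boldsymbol{\mc H}}$, and $T\simeq\wt T\simeq\boldsymbol T$.

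Third, I apply $W$ to the subspaces $\wt{\mc M}_i$ of Theorem~\ref{k_r_invariant_theorem}, which by its converse part are $\wt T$-invariant and nested. In their description, the second component is $\wt Z_k^*(\cdots)$ with $\wt Z_k=Z_k$, so applying $W$ simply removes $Z_k^*$. This gives
\[
W\wt{\mc M}_i=\bigl\{\Theta_k\cdots\Theta_{i+1}u_{i+1}\oplus\Delta_k\Theta_{k-1}\cdots\Theta_{i+1}u_{i+1}\oplus\cdots\oplus\Delta_{i+1}u_{i+1}\oplus v_i\oplus\cdots\oplus v_1\bigr\}\ominus\boldsymbol{\mc G}=\boldsymbol{\mc M}_i.
\]
Unitary maps preserve orthogonal complements, so this identification is exact. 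Invariance of $\boldsymbol{\mc M}_i$ under $\boldsymbol T$ and the chain $\boldsymbol{\mc M}_1\sub\cdots\sub\boldsymbol{\mc M}_{k-1}$ then transfer directly from the converse part of Theorem~\ref{k_r_invariant_theorem}. As an alternative, one can verify them directly: the nesting follows by substituting $u_{i+1}=\Theta_{i+1}u_{i+1}'$ as in that proof, and invariance follows by the same $L^2\ominus H^2$ orthogonality argument applied to the complement.

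The main obstacle is bookkeeping rather than anything conceptual: making sure the unitary equivalence of $T$ with $\wt T$ uses exactly the given $\Theta$. This needs the coincidence hypothesis in Theorem~\ref{k_r_invariant_theorem}, that is, Proposition~\ref{functional_model} combined with the invariance of the model under coincidence. I also need to confirm that $Z_k$ is applied to $\Delta u$ with $u$ analytic, so that the first component of $W\wt{\mc G}$ lands in $H^2(\mc E_{k+1})$ and the formula matches $\boldsymbol{\mc G}$ exactly.
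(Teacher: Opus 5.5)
Your proposal is correct and is essentially the paper's argument: the paper replaces $\Phi$ by $\boldsymbol{\Phi}=\Phi^{\mc{L}_*}\oplus(\Phi_{\mc{W}_k}\oplus\cdots\oplus\Phi_{\mc{W}_1})$, and since $\Phi=\Phi^{\mc{L}_*}\oplus Z_k^*(\Phi_{\mc{W}_k}\oplus\cdots\oplus\Phi_{\mc{W}_1})$, this is exactly $(I\oplus Z_k)\Phi$, i.e.\ your $W$ composed with the model unitary. Your version works directly on $\wt{\mc{H}}$ with the $Z_k$ of the given factorization (unitary by hypothesis) and takes invariance and nesting from the converse half of Theorem~\ref{k_r_invariant_theorem}; this is arguably the cleaner phrasing, since it never has to identify the given factorization with the one built from the spaces $\mc{W}_i$ in the dilation space.
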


	\begin{proof}
		The result follows directly from the proof of Theorem \ref{k_r_invariant_theorem} upon replacing the unitary operator \( \Phi \) defined in \eqref{Phi_for_model} by the unitary operator
		\[
		\boldsymbol{\Phi} : \mc{K} \longrightarrow \boldsymbol{\mc{K}}
		\coloneqq
		L^2(\mc{E}_*)
		\oplus
		\overline{\Delta_k L^2(\mc{E}_k)}
		\oplus \cdots \oplus
		\overline{\Delta_1 L^2(\mc{E}_1)},
		\]
		defined by
		\[
		\boldsymbol{\Phi}
		=
		\Phi^{\mc{L}_*}
		\oplus
		\bigl(
		\Phi_{\mc{W}_k}
		\oplus \cdots \oplus
		\Phi_{\mc{W}_1}
		\bigr).
		\]
	\end{proof}

	In the case of $2$-regular factorizations, Sz.-Nagy and Foia\c{s} made the following remark concerning two functional models, which extends naturally to the setting of $k$-regular factorizations.
	\begin{remark}
		From the preceding discussion, we observe that two functional model pairs arise naturally, namely
		$(\wt{\mc H}, \wt{T})$ and $(\boldsymbol{\mc H}, \boldsymbol{T})$.
		The model pair $(\boldsymbol{\mc H}, \boldsymbol{T})$ is particularly convenient when working with a fixed $k$-regular factorization.
		In contrast, when comparing two distinct $k$-regular factorizations, the model pair $(\wt{\mc H}, \wt{T})$ is more appropriate.
	\end{remark}

	\begin{remark}
		The above Theorem \ref{k_r_invariant_theorem} extends the classical result of Sz.-Nagy and Foiaș (see Chapter VI of \cite{NFBK10}), which establishes a correspondence between invariant subspaces of a c.n.u.\ contraction and regular ($2$-regular) factorizations of its characteristic function. While the classical case corresponds to $k=2$, the present result treats chains of invariant subspaces
		\[
		\mc M_1 \sub \cdots \sub \mc M_{k-1},
		\]
		and shows that they are in correspondence with $k$-regular factorizations
		\[
		\Theta = \Theta_k \cdots \Theta_1,
		\]
		thereby providing a higher-order structural generalization of the Sz.-Nagy–Foiaș theory.
	\end{remark}

	Next, we recall the fundamental structure of the Sz.-Nagy--Foia\c{s} functional model associated with a contractive analytic function, as presented in Chapter VI of \cite{NFBK10}. The corresponding model operator $T_\Theta$ acts on the Hilbert space
	\begin{equation*}
		\mc{H}(\Theta)
		\coloneqq
		\left[
		H^2(\mc{E}_*) \oplus \overline{\Delta_{\Theta} L^2(\mc{E})}
		\right]
		\ominus
		\left\{
		\Theta w \oplus \Delta_{\Theta} w : w \in H^2(\mc{E})
		\right\}.
	\end{equation*}
	The adjoint operator $T_\Theta^*$ is given by
	\begin{equation*}
		T_\Theta^*(u \oplus v)
		\coloneqq
		e^{-it}\bigl(u - u(0)\bigr) \oplus e^{-it} v,\quad u\oplus v\in \mc{H}(\Theta).
	\end{equation*}
	
	\begin{theorem}\label{unitary_equivalent}
		Let \( \{\mathcal{E}, \mathcal{E}_*, \Theta(z)\} \) be a purely contractive analytic function admitting a $k$-regular factorization
		\[
		\Theta(z) = \Theta_k(z) \cdots \Theta_1(z),
		\qquad z \in \mathbb{D},
		\]
		where \( \{\mc{E}_i, \mc{E}_{i+1}, \Theta_i(z)\} \) are contractive analytic functions for \( i = 1, \dots, k \), with \( \mc{E}_1 = \mc{E} \) and \( \mc{E}_{k+1} = \mc{E}_* \).
		Let \( \mc{M}_1 \sub \cdots \sub \mc{M}_{k-1} \) denote the corresponding chain of invariant subspaces for the model contraction $T_\Theta$ acting on $\mc{H}(\Theta)$. Define the successive orthogonal subspaces as
		\[
		\mc{H}_1 \coloneqq \mc{M}_1, \quad
		\mc{H}_i \coloneqq \mc{M}_i \ominus \mc{M}_{i-1} \ (i=2,\dots,k-1), \quad
		\mc{H}_k \coloneqq \mc{H}(\Theta) \ominus \mc{M}_{k-1}.
		\]
		With respect to the orthogonal direct sum decomposition
		\[
		\mc{H}(\Theta) = \mc{H}_1 \oplus \mc{H}_2 \oplus \cdots \oplus \mc{H}_k,
		\]
		the operator $T_\Theta$ admits the following block upper triangular matrix representation
		\begin{equation*}
			T_\Theta = \begin{bmatrix}
				A_1 & * & \cdots & * \\
				0 & A_2 & \cdots & * \\
				\vdots & \vdots & \ddots & \vdots \\
				0 & 0 & \cdots & A_k
			\end{bmatrix}.
		\end{equation*}
		Then, for each $i = 1, \dots, k$, the diagonal block $A_i \coloneqq P_{\mc{H}_i} T_\Theta |_{\mc{H}_i}$ is unitarily equivalent to $T_{\Theta_i}$, the standard functional model operator associated with the contractive analytic function $\Theta_i$.
	\end{theorem}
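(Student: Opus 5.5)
The plan is to reduce to the classical case $k=2$ and induct on $k$, using Proposition~\ref{propo_k_regular} to split the $k$-regular factorization. The classical result (Sz.-Nagy--Foia\c{s}, Chapter VII of \cite{NFBK10}) says the following. If $\Theta=\Theta''\Theta'$ is a regular factorization and $\mc M$ is the corresponding invariant subspace of $T_\Theta$, then in the decomposition $\mc H(\Theta)=\mc M\oplus(\mc H(\Theta)\ominus\mc M)$ the compression $P_{\mc M}T_\Theta|_{\mc M}=T_\Theta|_{\mc M}$ is unitarily equivalent to $T_{\Theta'}$, and the compression to $\mc H(\Theta)\ominus\mc M$ is unitarily equivalent to $T_{\Theta''}$. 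The block upper triangular form itself is immediate: each $\mc M_i$ is $T_\Theta$-invariant by Theorem~\ref{k_r_invariant_theorem}, applied to the model (which coincides with $\Theta$) via Theorem~\ref{functional_model_1}, and the $\mc M_i$ are nested. So only the identification of the diagonal blocks needs work.

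\textbf{Extreme blocks.} Take the partition $\{\{k,\dots,2\},\{1\}\}$. By Proposition~\ref{propo_k_regular}(iii), $\Theta=(\Theta_k\cdots\Theta_2)\Theta_1$ is $2$-regular. I will check, using the model of Theorem~\ref{functional_model_1}, that its associated invariant subspace is exactly $\mc M_1$. After applying the unitary $Z_k$ and regrouping the defect spaces by the identity \eqref{deco_relation_zK} from Proposition~\ref{partition}, the $2$-regular subspace formula coincides with the formula for $\boldsymbol{\mc M}_1$. The classical theorem then gives $A_1\cong T_{\Theta_1}$. Similarly, the partition $\{\{k\},\{k-1,\dots,1\}\}$ gives the $2$-regular factorization $\Theta_k(\Theta_{k-1}\cdots\Theta_1)$ with invariant subspace $\mc M_{k-1}$. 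Hence $A_k=P_{\mc H_k}T_\Theta|_{\mc H_k}\cong T_{\Theta_k}$, and $T_\Theta|_{\mc M_{k-1}}\cong T_{\Theta_{k-1}\cdots\Theta_1}$.

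\textbf{Middle blocks.} By Proposition~\ref{propo_k_regular}(ii), $\Theta_{k-1}\cdots\Theta_1$ is $(k-1)$-regular, so by induction the theorem holds for $T_{\Theta_{k-1}\cdots\Theta_1}$ with its chain $\mc M_1'\sub\cdots\sub\mc M_{k-2}'$. It then remains to show that the unitary $W:\mc M_{k-1}\to\mc H(\Theta_{k-1}\cdots\Theta_1)$ from the classical theorem carries $\mc M_i$ onto $\mc M_i'$ for $i\le k-2$. Once this holds, the compressions $P_{\mc H_i}T_\Theta|_{\mc H_i}$ for $i\le k-1$ are computed inside $\mc M_{k-1}$, since $\mc H_i\sub\mc M_{k-1}$ and $\mc M_{k-1}$ is invariant. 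These compressions therefore match the inductive diagonal blocks, which are unitarily equivalent to $T_{\Theta_i}$.

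\textbf{Main obstacle.} The hard step is the compatibility claim $W\mc M_i=\mc M_i'$, because the classical $W$ is built only implicitly, through the geometry of the minimal unitary dilation. I would first try to make $W$ explicit in the $\boldsymbol{\mc H}$-model. In that model $\mc M_{k-1}$ is given by the formula of Theorem~\ref{functional_model_1}, with elements $\Theta_k u_k\oplus\Delta_k u_k\oplus v_{k-1}\oplus\cdots\oplus v_1$ taken modulo $\boldsymbol{\mc G}$. The candidate map sends this element to $u_k\oplus Z_{k-1}^{*}$-type regrouped data $(v_{k-1}\oplus\cdots\oplus v_1)$ inside $H^2(\mc E_k)\oplus\bigoplus_{j\le k-1}\ov{\Delta_jL^2(\mc E_j)}$, again modulo the analogous $\boldsymbol{\mc G}'$. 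I would verify three things. First, this map is isometric, which should hold on the generating set because $\|\Theta_k u\|^2+\|\Delta_k u\|^2=\|u\|^2$. Second, it intertwines the two backward-shift adjoints. Third, it sends the generators of $\boldsymbol{\mc M}_i$ to those of $\boldsymbol{\mc M}_i'$, which can be read off directly by matching the formulas with $u_{i+1}$ in both. The delicate point is the $u(0)$-correction term in $T^*$: the first component $u_k$ is not of the form $\Theta_k u_k$ in general, so I would instead verify that $T_\Theta|_{\mc M_{k-1}}$ corresponds to the restricted shift, by checking the formula for $T$ rather than $T^*$ on $\mc M_{k-1}$.
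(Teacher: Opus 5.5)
Your route differs from the paper's. The step you single out as the main obstacle goes through essentially as you propose. The real gap is one you did not flag, in the induction itself.

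\textbf{The obstacle resolves.} Work entirely in the model of Theorem~\ref{functional_model_1}. Put $\boldsymbol{\mc K}_+\coloneqq H^2(\mc E_{k+1})\oplus\ov{\Delta_kL^2(\mc E_k)}\oplus\cdots\oplus\ov{\Delta_1L^2(\mc E_1)}$ and $\boldsymbol{\mc K}'_+\coloneqq H^2(\mc E_k)\oplus\ov{\Delta_{k-1}L^2(\mc E_{k-1})}\oplus\cdots\oplus\ov{\Delta_1L^2(\mc E_1)}$. Write $\boldsymbol{\mc M}_i=\boldsymbol{\mc G}_i\ominus\boldsymbol{\mc G}$, and let $\boldsymbol{\mc G}'$, $\boldsymbol{\mc H}'$, $\boldsymbol{\mc M}'_i=\boldsymbol{\mc G}'_i\ominus\boldsymbol{\mc G}'$, $\boldsymbol T'$ be the corresponding objects for $\Theta_{k-1}\cdots\Theta_1$. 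Define $J(u\oplus v)\coloneqq\Theta_ku\oplus\Delta_ku\oplus v$; no $Z_{k-1}$ is needed.
\begin{itemize}
\item Since $\|\Theta_ku\|^2+\|\Delta_ku\|^2=\|u\|^2$, $J$ is an isometry of $\boldsymbol{\mc K}'_+$ onto the closed space $\boldsymbol{\mc G}_{k-1}$.
\item $J$ maps $\boldsymbol{\mc G}'$ onto $\boldsymbol{\mc G}$ and $\boldsymbol{\mc G}'_i$ onto $\boldsymbol{\mc G}_i$. Hence $J\boldsymbol{\mc H}'=\boldsymbol{\mc M}_{k-1}$ and $J\boldsymbol{\mc M}'_i=\boldsymbol{\mc M}_i$.
\item Let $S$ be multiplication by $e^{it}$. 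Then $JS=SJ$ and $\boldsymbol T=P_{\boldsymbol{\mc H}}S|_{\boldsymbol{\mc H}}$. Also $P_{\boldsymbol{\mc H}}=P_{\boldsymbol{\mc M}_{k-1}}$ on $\boldsymbol{\mc G}_{k-1}=\boldsymbol{\mc M}_{k-1}\oplus\boldsymbol{\mc G}$.
\item Therefore $\boldsymbol TJx=P_{\boldsymbol{\mc M}_{k-1}}SJx=JP_{\boldsymbol{\mc H}'}Sx=J\boldsymbol T'x$ for $x\in\boldsymbol{\mc H}'$, with no $u(0)$-terms to control.
\end{itemize}

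\textbf{Comparison with the paper.} The paper does not induct. For each $i$ it writes $\mc H_i=\{\Phi_{i+1}u_{i+1}\oplus Z_k^*(\Lambda_{i+1}u_{i+1}\oplus v_i\oplus 0\oplus\cdots\oplus 0):u_{i+1}\oplus v_i\in\mc H(\Theta_i)\}$. Here $\Phi_{i+1}=\Theta_k\cdots\Theta_{i+1}$ and $\Lambda_{i+1}u=\Delta_k\Theta_{k-1}\cdots\Theta_{i+1}u\oplus\cdots\oplus\Delta_{i+1}u$, and the description uses $\Phi_{i+1}^*\Phi_{i+1}+\Lambda_{i+1}^*\Lambda_{i+1}=I$. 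It takes $U_i$ to be the coordinate map, whose inverse is a one-block variant of your $J$. It then verifies $U_iA_i^*U_i^*=T_{\Theta_i}^*$ by showing that the $u(0)$-correction in $T_\Theta^*$ is orthogonal to $\mc H_i$. Your version gives slightly more, namely $T_\Theta|_{\mc M_j}\cong T_{\Theta_j\cdots\Theta_1}$ with the chain carried along. The paper's version is direct and needs no outside theorem.

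\textbf{The gap.} The statement you induct on is about a \emph{purely contractive} $\Theta$: the chain $\mc M_i$ is defined through Theorem~\ref{k_r_invariant_theorem}, which requires $\Theta$ to coincide with the characteristic function of a c.n.u.\ contraction. The Sz.-Nagy--Foia\c{s} two-factor theorem you cite from \cite{NFBK10} has the same hypothesis. But $\Theta_{k-1}\cdots\Theta_1$ need not be purely contractive. For instance, take $\Theta_k=\diag(0,1)$, $\Theta_{k-1}=\diag(1,z)$ and $\Theta_{k-2}=\cdots=\Theta_1=I_{\mb C^2}$.
\begin{itemize}
\item This is a $k$-regular factorization of the purely contractive $\Theta=\diag(0,z)$: $\Delta_j=0$ for $j<k$ and $\Delta_k\Theta_{k-1}\cdots\Theta_1=\Delta_k=\Delta_\Theta$, so $Z_k$ is onto.
\item Yet $\Theta_{k-1}\cdots\Theta_1=\diag(1,z)$ is not purely contractive.
\item The case is not vacuous, since $\mc H(\diag(1,z))\neq\{0\}$.
\end{itemize}
So your inductive hypothesis cannot be invoked as stated, and at lower levels the classical theorem would be used outside its hypotheses.

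\textbf{The repair.} Prove the stronger statement for arbitrary contractive factorizations in the model of Theorem~\ref{functional_model_1}, with the $\boldsymbol{\mc M}_i$ defined by their formulas, and drop the classical theorem entirely.
\begin{itemize}
\item $J$ handles blocks $1,\dots,k-1$ by reduction to level $k-1$.
\item For the last block, $\boldsymbol{\mc H}\ominus\boldsymbol{\mc M}_{k-1}=\boldsymbol{\mc K}_+\ominus\boldsymbol{\mc G}_{k-1}=\mc H(\Theta_k)\oplus 0\oplus\cdots\oplus 0$. This space is $S^*$-invariant, so $A_k^*$ is literally $T_{\Theta_k}^*$ there.
\end{itemize}
The induction then starts at $k=1$, where $\boldsymbol{\mc H}=\mc H(\Theta_1)$. $k$-regularity is used only once, at the top level, to transfer $(\boldsymbol{\mc H},\boldsymbol T,\boldsymbol{\mc M}_i)$ to $(\mc H(\Theta),T_\Theta,\mc M_i)$ via $I\oplus Z_k^*$.
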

	
	\begin{proof}
		We begin by establishing the necessary notation. For $i=1,\dots,k$, we define the functions $\Phi_i = \Theta_k \cdots \Theta_i$ and the associated operators
		\begin{equation*}
			\Lambda_i = [\Delta_k \Theta_{k-1} \cdots \Theta_i, \dots, \Delta_{i+1} \Theta_i, \Delta_i]^T.
		\end{equation*}
		A direct computation yields
		\begin{align*}
			\Lambda_i^* \Lambda_i &= [\Theta_i^* \cdots \Theta_{k-1}^* \Delta_k, \dots, \Theta_i^* \Delta_{i+1}, \Delta_i] [\Delta_k \Theta_{k-1} \cdots \Theta_i, \dots, \Delta_{i+1} \Theta_i, \Delta_i]^T \nonumber \\
			&= \Theta_i^* \cdots \Theta_{k-1}^* \Delta_k^2 \Theta_{k-1} \cdots \Theta_i + \cdots + \Theta_i^* \Delta_{i+1}^2 \Theta_i + \Delta_i^2 \nonumber \\
			&= -\Theta_i^* \cdots \Theta_k^* \Theta_k \cdots \Theta_i + I_{H^2(\mc E_i)} \nonumber \\
			&= I_{H^2(\mc E_i)} - \Phi_i^* \Phi_i.
		\end{align*}
		Consequently, we obtain the fundamental identity $\Lambda_i^* \Lambda_i + \Phi_i^* \Phi_i = I_{H^2(\mc E_i)}$.
		Recall the structural decomposition established in Theorem \ref{k_r_invariant_theorem}
		\begin{align*}
			\mc{H}_1 = \mc M_1& = \left\{ \Phi_2 u_2 \oplus Z_k^*(\Lambda_2 u_2 \oplus v_1) : u_2 \in H^2(\mc{E}_2), \ v_1 \in \overline{\im(\Delta_1)} \right\} \\&\quad\quad\ominus \left\{ \Theta u \oplus \Delta_{\Theta} u : u \in H^2(\mc{E}) \right\},
		\end{align*}
		where $Z_k$ denotes the unitary operator associated with the $k$-regular factorization $\Theta = \Theta_k \cdots \Theta_1$. Assume $h_1 \in \mc{H}_1$. Then $h_1$ admits the representation $h_1 = \Phi_2 u_2 \oplus Z_k^*(\Lambda_2 u_2 \oplus v_1)$ for some $u_2 \in H^2(\mc{E}_2)$ and $v_1 \in \overline{\im(\Delta_1)}$, satisfying the orthogonality condition
		\begin{equation*}
			\langle \Phi_2 u_2 \oplus Z_k^*(\Lambda_2 u_2 \oplus v_1), \ \Theta u \oplus \Delta_{\Theta} u \rangle = 0 \quad \text{for all } u \in H^2(\mc{E}).
		\end{equation*}
		Since $Z_k (\Delta_{\Theta} u) = \Lambda_1 u = \Lambda_2 \Theta_1 u \oplus \Delta_1 u$ and $\Phi_1 = \Phi_2 \Theta_1$, this sequentially implies the following relations
		\begin{align*}
			0
			&= \langle \Phi_2 u_2, \Phi_1 u \rangle
			+ \langle \Lambda_2 u_2 \oplus v_1, \Lambda_2 \Theta_1 u \oplus \Delta_1 u \rangle  \\
			&= \langle \Phi_2 u_2, \Phi_2 \Theta_1 u \rangle
			+ \langle \Lambda_2 u_2, \Lambda_2 \Theta_1 u \rangle
			+ \langle v_1, \Delta_1 u \rangle  \\
			&= \langle \Phi_2^* \Phi_2 u_2, \Theta_1 u \rangle
			+ \langle \Lambda_2^* \Lambda_2 u_2, \Theta_1 u \rangle
			+ \langle v_1, \Delta_1 u \rangle  \\
			&= \langle (\Phi_2^* \Phi_2 + \Lambda_2^* \Lambda_2) u_2, \Theta_1 u \rangle
			+ \langle v_1, \Delta_1 u \rangle \\
			&= \langle u_2, \Theta_1 u \rangle + \langle v_1, \Delta_1 u \rangle.
		\end{align*}
		This orthogonality precisely means that $u_2 \oplus v_1 \in \mc{H}(\Theta_1)$. Consequently, the space $\mc{H}_1$ is completely characterized as
		\begin{equation*}
			\mc{H}_1 = \left\{ \Phi_2 u_2 \oplus Z_k^*(\Lambda_2 u_2 \oplus v_1) : u_2 \oplus v_1 \in \mc{H}(\Theta_1) \right\}.
		\end{equation*}
		Define the operator
		\[
		U_1 : \mc{H}_1 \longrightarrow \mc{H}(\Theta_1)
		\]
		by
		\begin{equation*}
			U_1\bigl(\Phi_2 u_2 \oplus Z_k^*(\Lambda_2 u_2 \oplus v_1)\bigr)
			= u_2 \oplus v_1.
		\end{equation*}
		To establish that $U_1$ is a unitary operator, it suffices to verify that it acts as an isometry. For any $u_2 \oplus v_1 \in \mc{H}(\Theta_1)$, we evaluate the norm
		\begin{align*}
			\| \Phi_2 u_2 \oplus Z_k^*(\Lambda_2 u_2 \oplus v_1) \|^2 &= \| \Phi_2 u_2 \|^2 + \| \Lambda_2 u_2 \|^2 + \| v_1 \|^2 \nonumber \\
			&= \langle (\Phi_2^* \Phi_2 + \Lambda_2^* \Lambda_2) u_2, u_2 \rangle + \| v_1 \|^2 \nonumber \\
			&= \| u_2 \|^2 + \| v_1 \|^2.
		\end{align*}
		Next, we compute explicitly the action of the compressed adjoint block $A_1^*$. We have
		\begin{align}
			(U_1 A_1^* U_1^*)(u_2 \oplus v_1)
			&= U_1 P_{\mc H_1} T_\Theta^*
			\bigl(\Phi_2 u_2 \oplus Z_k^*(\Lambda_2 u_2 \oplus v_1)\bigr) \nonumber \\
			&= U_1 P_{\mc H_1}
			\Bigl[
			e^{-it}\bigl(\Phi_2 u_2 - (\Phi_2 u_2)(0)\bigr)
			\;\oplus
			Z_k^*\bigl(
			\Lambda_2 e^{-it}u_2
			\oplus e^{-it} v_1
			\bigr)
			\Bigr] \nonumber \\
			&= U_1 P_{\mc H_1}
			\Bigl[
			\bigl(\Phi_2 u_2' \oplus Z_k^*(\Lambda_2 u_2' \oplus v_1')\bigr)
			+ \bigl(\tilde{u}_2 \oplus Z_k^* \tilde{v}_2\bigr)
			\Bigr], \label{eq:U1_adjoint}
		\end{align}
		where
		\[
		\begin{aligned}
			u_2' &= e^{-it}(u_2 - u_2(0)), & \quad v_1' &= e^{-it} v_1, \\
			\tilde{u}_2 &= e^{-it}\bigl(\Phi_2 u_2(0) - (\Phi_2 u_2)(0)\bigr), & \quad \tilde{v}_2 &= e^{-it} \Lambda_2 u_2(0).
		\end{aligned}
		\]
		The assertion that $u_2' \oplus v_1' \in \mc{H}(\Theta_1)$ follows immediately from the fact that the adjoint model operator inherently satisfies $T_{\Theta_1}^*(u_2 \oplus v_1) \in \mc{H}(\Theta_1)$.
		We now claim that the residual vector $\tilde{u}_2 \oplus Z_k^* \tilde{v}_2$ is orthogonal to $\mc{H}_1$. Taking an arbitrary element $\Phi_2 w \oplus Z_k^*(\Lambda_2 w \oplus y) \in \mc{H}_1$, we compute the inner product
		\begin{align*}
			\langle \tilde{u}_2 \oplus Z_k^* \tilde{v}_2,& \ \Phi_2 w \oplus Z_k^*(\Lambda_2 w \oplus y) \rangle\\
			&= \langle e^{-it}(\Phi_2 u_2(0) - (\Phi_2 u_2)(0)), \ \Phi_2 w \rangle + \langle e^{-it} \Lambda_2 u_2(0), \ \Lambda_2 w \rangle \nonumber \\
			&= \langle e^{-it}(\Phi_2^* \Phi_2 + \Lambda_2^* \Lambda_2) u_2(0), \ w \rangle - \langle e^{-it}(\Phi_2 u_2)(0), \ \Phi_2 w \rangle \nonumber \\
			&= \langle e^{-it} u_2(0), \ w \rangle - \langle e^{-it}(\Phi_2 u_2)(0), \ \Phi_2 w \rangle \nonumber \\
			&= 0.
		\end{align*}
		This inner product evaluates to zero since the terms $e^{-it} u_2(0)$ and $e^{-it}(\Phi_2 u_2)(0)$ reside in the orthogonal complement $L^2 \ominus H^2$, making them explicitly orthogonal to the analytic functions $w$ and $\Phi_2 w$ in $H^2$. By applying this orthogonal reduction to equation \eqref{eq:U1_adjoint}, we obtain
		\begin{align*}
			(U_1 A_1^* U_1^*)(u_2 \oplus v_1) &= U_1 \left( \Phi_2 u_2' \oplus Z_k^*(\Lambda_2 u_2' \oplus v_1') \right) \nonumber \\
			&= u_2' \oplus v_1' \nonumber \\
			&= T_{\Theta_1}^*(u_2 \oplus v_1).
		\end{align*}
		Proceeding to the intermediate blocks $i = 2, \dots, k-1$, we recall that $\mc{H}_i = \mc M_i \ominus \mc M_{i-1}$, which explicitly expands to
		\begin{multline}
			\mc{H}_i =\Big\{ \Phi_{i+1} u_{i+1} \oplus Z_k^*(\Lambda_{i+1} u_{i+1} \oplus v_i \oplus 0 \oplus \cdots \oplus 0) : u_{i+1} \in H^2(\mc{E}_{i+1}), \\ \ v_i \in \overline{\im(\Delta_i)} \Big\}
			\ominus \left\{ \Phi_i u_i \oplus Z_k^*(\Lambda_i u_i \oplus 0 \oplus \cdots \oplus 0) : u_i \in H^2(\mc{E}_i) \right\}.
		\end{multline}
		If $h_i \in \mc{H}_i$, it can be represented as $h_i = \Phi_{i+1} u_{i+1} \oplus Z_k^*(\Lambda_{i+1} u_{i+1} \oplus v_i \oplus 0 \oplus \cdots \oplus 0)$ for some elements $u_{i+1} \in H^2(\mc{E}_{i+1})$ and $v_i \in \overline{\im(\Delta_i)}$, satisfying the orthogonality condition
		\begin{equation*}
			\langle \Phi_{i+1} u_{i+1} \oplus Z_k^*(\Lambda_{i+1} u_{i+1} \oplus v_i \oplus 0 \oplus \cdots \oplus 0), \ \Phi_i u_i \oplus Z_k^*(\Lambda_i u_i \oplus 0 \oplus \cdots \oplus 0) \rangle = 0
		\end{equation*}
		for all $u_i \in H^2(\mc{E}_i)$. Applying the recursive factorization relations $\Phi_i = \Phi_{i+1} \Theta_i$ and $\Lambda_i = [\Lambda_{i+1} \Theta_i, \Delta_i]^T$, we systematically deduce
		\begin{align*}
			0&=\langle \Phi_{i+1} u_{i+1}, \Phi_i u_i \rangle + \langle \Lambda_{i+1} u_{i+1} \oplus v_i, \Lambda_i u_i \rangle \\
			&= \langle \Phi_{i+1} u_{i+1}, \Phi_{i+1} \Theta_i u_i \rangle + \langle \Lambda_{i+1} u_{i+1}, \Lambda_{i+1} \Theta_i u_i \rangle + \langle v_i, \Delta_i u_i \rangle \\
			&= \langle (\Phi_{i+1}^* \Phi_{i+1} + \Lambda_{i+1}^* \Lambda_{i+1}) u_{i+1}, \Theta_i u_i \rangle + \langle v_i, \Delta_i u_i \rangle \\
			&= \langle u_{i+1}, \Theta_i u_i \rangle + \langle v_i, \Delta_i u_i \rangle.
		\end{align*}
		for all $u_i \in H^2(\mc{E}_i)$. Since this equality holds for an arbitrary $u_i \in H^2(\mc{E}_i)$, it strictly implies that $u_{i+1} \oplus v_i \in \mc{H}(\Theta_i)$.
		Consequently, for each $i = 2, \dots, k-1$, the subspace $\mc{H}_i$ simplifies to
		\begin{equation}
			\mc{H}_i = \left\{ \Phi_{i+1} u_{i+1} \oplus Z_k^*(\Lambda_{i+1} u_{i+1} \oplus v_i \oplus 0 \oplus \cdots \oplus 0) : u_{i+1} \oplus v_i \in \mc{H}(\Theta_i) \right\}.
		\end{equation}
		We define the transformation $U_i : \mc{H}_i \to \mc{H}(\Theta_i)$ by setting
		\begin{equation}
			U_i(\Phi_{i+1} u_{i+1} \oplus Z_k^*(\Lambda_{i+1} u_{i+1} \oplus v_i \oplus 0 \oplus \cdots \oplus 0)) \coloneq u_{i+1} \oplus v_i.
		\end{equation}
		A straightforward norm computation yields
		\begin{align*}
			\| \Phi_{i+1} u_{i+1} &\oplus Z_k^*(\Lambda_{i+1} u_{i+1} \oplus v_i \oplus 0 \oplus \cdots \oplus 0) \|^2 \\
			&= \| \Phi_{i+1} u_{i+1} \|^2 + \| \Lambda_{i+1} u_{i+1} \|^2 + \| v_i \|^2 \nonumber \\
			&= \| u_{i+1} \|^2 + \| v_i \|^2.
		\end{align*}
		This confirms that $U_i$ is indeed a unitary operator from $\mc{H}_i$ onto $\mc{H}(\Theta_i)$.
		For any $u_{i+1} \oplus v_i \in \mc{H}(\Theta_i)$, we calculate the adjoint action as follows
		\begin{align}
			(U_i A_i^* U_i^*)&(u_{i+1} \oplus v_i) \nonumber \\
			&= U_i P_{\mc{H}_i} T_\Theta^* (\Phi_{i+1} u_{i+1} \oplus Z_k^*(\Lambda_{i+1} u_{i+1} \oplus v_i \oplus 0 \oplus \cdots \oplus 0)) \nonumber \\
			&= U_i P_{\mc{H}_i} \Bigl[ e^{-it}(\Phi_{i+1} u_{i+1} - (\Phi_{i+1} u_{i+1})(0)) \oplus e^{-it} Z_k^*(\Lambda_{i+1} u_{i+1} \oplus v_i  \nonumber \\
			&\qquad\qquad\oplus 0 \oplus \cdots \oplus 0) \Bigr] \nonumber \\
			&= U_i P_{\mc{H}_i} \Bigl[ \{ \Phi_{i+1} u_{i+1}' \oplus Z_k^*(\Lambda_{i+1} u_{i+1}' \oplus v_i') \}  + \{\tilde{u}_{i+1} \oplus Z_k^* \tilde{v}_{i+1}\} \Bigr], \label{eq:Ui_adjoint}
		\end{align}
		where
		\begin{align*}
			u_{i+1}' &= e^{-it}\bigl(u_{i+1} - u_{i+1}(0)\bigr), \quad& v_i' &= e^{-it}v_i, \\
			\tilde{u}_{i+1} &= e^{-it}\bigl(\Phi_{i+1} u_{i+1}(0) - (\Phi_{i+1} u_{i+1})(0)\bigr),  & \tilde{v}_{i+1} &= e^{-it} \Lambda_{i+1} u_{i+1}(0).
		\end{align*}
		We show that the vector \( \tilde{u}_{i+1} \oplus Z_k^* \tilde{v}_{i+1} \) is orthogonal to \( \mc{H}_i \). Let  
		\( \Phi_{i+1} w \oplus Z_k^*(\Lambda_{i+1} w \oplus y) \in \mc{H}_i\)
		be arbitrary. We compute the inner product
		\begin{align*}
			\langle \tilde{u}_{i+1}& \oplus Z_k^* \tilde{v}_{i+1}, \ \Phi_{i+1} w \oplus Z_k^*(\Lambda_{i+1} w \oplus y) \rangle\\
			&= \langle e^{-it}(\Phi_{i+1} u_{i+1}(0) - (\Phi_{i+1} u_{i+1})(0)), \ \Phi_{i+1} w \rangle  + \langle e^{-it} \Lambda_{i+1} u_{i+1}(0), \ \Lambda_{i+1} w \rangle \nonumber \\
			&= \langle e^{-it} u_{i+1}(0), \ w \rangle - \langle e^{-it}(\Phi_{i+1} u_{i+1})(0), \ \Phi_{i+1} w \rangle \nonumber \\
			&= 0.
		\end{align*}
		Using this orthogonality in \eqref{eq:Ui_adjoint}, we obtain
		\begin{align*}
			(U_i A_i^* U_i^*)(u_{i+1} \oplus v_i) &= U_i \left( \Phi_{i+1} u_{i+1}' \oplus Z_k^*(\Lambda_{i+1} u_{i+1}' \oplus v_i') \right) \nonumber \\
			&= u_{i+1}' \oplus v_i' \nonumber \\
			&= T_{\Theta_i}^*(u_{i+1} \oplus v_i).
		\end{align*}
		This verifies the unitary equivalence $U_i A_i^* U_i^* = T_{\Theta_i}^*$ for the intermediate blocks. Finally, we address $i=k$. The subspace $\mc{H}_k = \mc{H}(\Theta) \ominus \mc M_{k-1}$ is given by
		\begin{align*}
			\mc{H}_k &= \left\{ H^2(\mc{E}_*) \oplus \overline{\Delta_{\Theta} L^2(\mc{E})} \right\} \ominus \Big\{ \Theta_k u_k \oplus Z_k^*(\Delta_k u_k \oplus v_{k-1} \oplus \cdots \oplus v_1)\\
			&\hspace{7cm}: v_j \in \overline{\im(\Delta_j)} \Big\} \nonumber \\
			&= \left\{ H^2(\mc{E}_*) \oplus Z_k^*(\overline{\Delta_k L^2(\mc{E}_k)} \oplus \{0\} \oplus \cdots \oplus \{0\}) \right\} \ominus \Big\{ \Theta_k u_k \oplus Z_k^*(\Delta_k u_k \oplus 0 \oplus  \\& \hspace{8cm}\cdots \oplus 0): u_k \in H^2(\mc{E}_k) \Big\}.
		\end{align*}
		Assuming $h_k \in \mc{H}_k$, we can express it as $h_k = u \oplus Z_k^*(v_k \oplus 0 \oplus \cdots \oplus 0)$ subject to the orthogonality constraint
		\begin{equation*}
			\langle u \oplus Z_k^*(v_k \oplus 0 \oplus \cdots \oplus 0), \ \Theta_k u_k \oplus Z_k^*(\Delta_k u_k \oplus 0 \oplus \cdots \oplus 0) \rangle = 0 ~ \text{for all } u_k \in H^2(\mc{E}_k).
		\end{equation*}
		Evaluating this constraint yields
		\begin{equation*}
			\langle u, \Theta_k u_k \rangle + \langle v_k, \Delta_k u_k \rangle = 0 \quad \text{for all } u_k \in H^2(\mc{E}_k),
		\end{equation*}
		which ensures that $u \oplus v_k \in \mc{H}(\Theta_k)$. Hence, the subspace is exactly characterized by
		\begin{equation}
			\mc{H}_k = \left\{ u \oplus Z_k^*(v_k \oplus 0 \oplus \cdots \oplus 0) : u \oplus v_k \in \mc{H}(\Theta_k) \right\}.
		\end{equation}
		We define the corresponding mapping $U_k : \mc{H}_k \to \mc{H}(\Theta_k)$ according to
		\begin{equation}
			U_k(u \oplus Z_k^*(v_k \oplus 0 \oplus \cdots \oplus 0)) \coloneq u \oplus v_k.
		\end{equation}
		Because the norm identity $\| u \oplus Z_k^*(v_k \oplus 0 \oplus \cdots \oplus 0) \|^2 = \| u \|^2 + \| v_k \|^2$ holds trivially, $U_k$ is a unitary operator. Applying the adjoint block $A_k^*$ to an element $u \oplus v_k \in \mc{H}(\Theta_k)$, we obtain
		\begin{align*}
			(U_k A_k^* U_k^*)(u \oplus v_k) &= U_k T_\Theta^* (u \oplus Z_k^*(v_k \oplus 0 \oplus \cdots \oplus 0)) \nonumber \\
			&= U_k \left( e^{-it}(u - u(0)) \oplus Z_k^*(e^{-it}v_k \oplus 0 \oplus \cdots \oplus 0) \right) \nonumber \\
			&= e^{-it}(u - u(0)) \oplus e^{-it}v_k \nonumber \\
			&= T_{\Theta_k}^*(u \oplus v_k).
		\end{align*}
		We conclude that for every  $i=1,\dots,k$, the diagonal block operator $A_i$ is unitarily equivalent to the functional model operator $T_{\Theta_i}$, completing the proof.
	\end{proof}
	
	
	\begin{corollary}
		Under the assumptions of Theorem $\ref{unitary_equivalent}$, for each $i = 1, \dots, k$, the characteristic function of the operator $A_i$ coincides with the purely contractive part of $\Theta_i$.
	\end{corollary}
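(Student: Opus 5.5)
Theorem \ref{unitary_equivalent} gives, for each $i=1,\dots,k$, a unitary $U_i:\mc H_i\to\mc H(\Theta_i)$ with $U_iA_i^*U_i^*=T_{\Theta_i}^*$. Hence $A_i$ is unitarily equivalent to the model operator $T_{\Theta_i}$ on $\mc H(\Theta_i)$, and the corollary reduces to a statement about the model operator of a single contractive analytic function. The plan has three steps.

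\textbf{Step 1: reduce to the characteristic function of $T_{\Theta_i}$.} The characteristic function is a unitary invariant; this is part of the Sz.-Nagy--Foia\c{s} theory in Chapter VI of \cite{NFBK10}. So it suffices to determine the characteristic function of $T_{\Theta_i}$.

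\textbf{Step 2: the purely contractive case.} By the Sz.-Nagy--Foia\c{s} model theorem, if $\Theta_i$ is purely contractive, then $T_{\Theta_i}$ is a c.n.u.\ contraction whose characteristic function coincides with $\Theta_i$. This is the content of Proposition \ref{functional_model} read in reverse.

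\textbf{Step 3: the general case (the main obstacle).} Here $\Theta_i$ is only assumed contractive. We use the canonical decomposition from Chapter V of \cite{NFBK10}:
\[
\Theta_i=\Theta_i^{0}\oplus\Theta_i'
\quad\text{on}\quad
\mc E_i=\mc E_i^{0}\oplus\mc E_i',
\]
where $\Theta_i^0$ is purely contractive and $\Theta_i'$ is a constant unitary from $\mc E_i'$ onto a subspace of $\mc E_{i+1}$. We then show that $\mc H(\Theta_i)=\mc H(\Theta_i^0)$ up to the obvious identification, with $T_{\Theta_i}$ corresponding to $T_{\Theta_i^0}$.

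To see this, note that on $\mc E_i'$ we have $\Delta_{\Theta_i}=0$. Hence the generators $\Theta_i w\oplus\Delta_{\Theta_i}w$ with $w\in H^2(\mc E_i')$ fill out exactly $H^2(\Theta_i'\mc E_i')\oplus 0$. Quotienting by this removes the summand $H^2(\Theta_i'\mc E_i')$ from $H^2(\mc E_{i+1})$, leaving the model space of $\Theta_i^0$ with the same backward-shift action.

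Combining the three steps, $A_i\cong T_{\Theta_i}\cong T_{\Theta_i^0}$, and the characteristic function of $A_i$ coincides with $\Theta_i^0$. Alternatively, one can cite directly the standard fact (Chapter VI, \cite{NFBK10}) that the characteristic function of $T_\Theta$ coincides with the purely contractive part of $\Theta$.

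The step needing most care is Step 3, specifically checking that the identification respects the coincidence notion: the unitaries between coefficient spaces must be compatible with the splitting $\mc E_{i+1}=\Theta_i'\mc E_i'\oplus(\mc E_{i+1}\ominus\Theta_i'\mc E_i')$. I would verify this by writing the decomposition explicitly in block form before passing to the model spaces.
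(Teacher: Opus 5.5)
Your proposal is correct and follows the route the paper intends (the corollary is stated without proof, as an immediate consequence): Theorem~\ref{unitary_equivalent} gives $A_i\cong T_{\Theta_i}$, and the Sz.-Nagy--Foia\c{s} fact that the characteristic function of $T_{\Theta}$ coincides with the purely contractive part of $\Theta$ finishes the argument, with your Step~3 correctly unpacking that fact via the canonical decomposition $\Theta_i=\Theta_i^0\oplus\Theta_i'$. One small correction: Step~2 is not Proposition~\ref{functional_model} ``read in reverse'', since that proposition starts from a c.n.u.\ $T$; what you need is the separate result in Chapter VI of \cite{NFBK10} that for purely contractive $\Theta$, $T_\Theta$ is c.n.u.\ and its characteristic function coincides with $\Theta$.
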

	
	\begin{corollary}
		Assume the hypotheses of Theorem $\ref{unitary_equivalent}$. Then, for each $i = 1, \dots, k-2$, the equality $\mc{M}_i = \mc{M}_{i+1}$ holds if and only if $\Theta_{i+1}$ is a unitary constant; that is, there exists a unitary operator $U : \mc{E}_{i+1} \to \mc{E}_{i+2}$ such that
		\[
		\Theta_{i+1}(z) = U \quad \text{for all } z \in \mathbb{D}.
		\]
	\end{corollary}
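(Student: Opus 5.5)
The plan is to reduce the statement to the diagonal block $A_{i+1}$ in Theorem~\ref{unitary_equivalent}. For $i\in\{1,\dots,k-2\}$ we have $\mc H_{i+1}=\mc M_{i+1}\ominus\mc M_i$. Since $\mc M_i\sub\mc M_{i+1}$, the equality $\mc M_i=\mc M_{i+1}$ holds if and only if $\mc H_{i+1}=\{0\}$. By Theorem~\ref{unitary_equivalent}, the unitary $U_{i+1}$ constructed there maps $\mc H_{i+1}$ onto $\mc H(\Theta_{i+1})$. Hence $\mc M_i=\mc M_{i+1}$ if and only if $\mc H(\Theta_{i+1})=\{0\}$, and the corollary reduces to the following claim about a single contractive analytic function $\{\mc E_{i+1},\mc E_{i+2},\Theta_{i+1}(z)\}$: $\mc H(\Theta_{i+1})=\{0\}$ if and only if $\Theta_{i+1}$ is a unitary constant.

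For the ``if'' direction, suppose $\Theta_{i+1}(z)\equiv U$ with $U$ unitary. Then $\Delta_{i+1}=0$, so the defect summand $\overline{\Delta_{i+1}L^2(\mc E_{i+1})}$ vanishes. Moreover $UH^2(\mc E_{i+1})=H^2(\mc E_{i+2})$, since $U$ acts coefficientwise and is onto. Therefore $\mc H(\Theta_{i+1})=H^2(\mc E_{i+2})\ominus UH^2(\mc E_{i+1})=\{0\}$.

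For the ``only if'' direction, I will use the Sz.-Nagy--Foia\c{s} decomposition of a contractive analytic function (Chapter V of \cite{NFBK10}). It writes $\Theta_{i+1}=\Theta^0\oplus\Theta''$, where $\Theta^0$ is purely contractive and $\Theta''$ is a unitary constant. I will also use that the model operator $T_{\Theta_{i+1}}$ is a c.n.u.\ contraction whose characteristic function coincides with $\Theta^0$ (Chapter VI of \cite{NFBK10}); this is the preceding corollary applied to $A_{i+1}$. If $\mc H(\Theta_{i+1})=\{0\}$, then $T_{\Theta_{i+1}}$ acts on the zero space, so both of its defect spaces are $\{0\}$ and its characteristic function is the trivial map $\{0\}\to\{0\}$. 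Coincidence with $\Theta^0$ forces the domain and range of $\Theta^0$ to be $\{0\}$. Hence $\Theta_{i+1}=\Theta''$ is a unitary constant $U:\mc E_{i+1}\to\mc E_{i+2}$.

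The main point needing care is this last step, namely that coincidence with the trivial characteristic function kills the purely contractive part entirely. It follows because coincidence is implemented by unitaries between the underlying coefficient spaces, so those spaces must be zero. As a sanity check, I would also verify the step directly from $\mc H(\Theta_{i+1})=\{0\}$. In that case the isometry $w\mapsto\Theta_{i+1}w\oplus\Delta_{i+1}w$ maps $H^2(\mc E_{i+1})$ onto $H^2(\mc E_{i+2})\oplus\overline{\Delta_{i+1}L^2(\mc E_{i+1})}$, so it is unitary. Comparing constants, i.e.\ applying the adjoint of this unitary to $e\oplus0$ with $e\in\mc E_{i+2}$ and using the Fourier-coefficient structure, should show that $\Theta_{i+1}(0)$ is unitary and $\Delta_{i+1}=0$. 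By the maximum principle, $\Theta_{i+1}$ is then constant.
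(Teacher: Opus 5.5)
Your proposal is correct and follows the paper's proof. You reduce, via the unitary $U_{i+1}$ of Theorem~\ref{unitary_equivalent}, to the triviality of $\mc H(\Theta_{i+1})$, and then use the Sz.-Nagy--Foia\c{s} fact that $\mc H(\Theta_{i+1})=\{0\}$ exactly when $\Theta_{i+1}$ is a unitary constant. The paper only cites that fact, whereas you justify it, either through the canonical decomposition together with Chapter VI of \cite{NFBK10} or directly through the wandering-subspace comparison, and both justifications hold up.
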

	\begin{proof}
		Fix $i \in \{1, \dots, k-2\}$. By Theorem \ref{unitary_equivalent}, the orthogonal difference space
		\[
		\mc{H}_{i+1} \coloneqq \mc{M}_{i+1} \ominus \mc{M}_i
		\]
		is unitarily equivalent to the model space $\mc{H}(\Theta_{i+1})$. It is a fundamental result in Sz.-Nagy--Foia\c{s} theory that a model space $\mc{H}(\Theta_{i+1})$ is trivial if and only if its defining contractive analytic function $\Theta_{i+1}$ is a unitary constant. The desired conclusion follows by combining these equivalences.
	\end{proof}

	\begin{proposition} \label{prop:aggregated_partition}
		Let \( \{\mathcal{E}, \mathcal{E}_*, \Theta(z)\} \) be a purely contractive analytic function admitting a $k$-regular factorization
		\[
		\Theta(z) = \Theta_k(z) \cdots \Theta_1(z),
		\qquad z \in \mathbb{D},
		\]
		where \( \{\mc{E}_i, \mc{E}_{i+1}, \Theta_i(z)\} \) are contractive analytic functions for \( i = 1, \dots, k \), with \( \mc{E}_1 = \mc{E} \) and \( \mc{E}_{k+1} = \mc{E}_* \).  Let
		\[
		\mc{M}_1 \subseteq \mc{M}_2 \subseteq \cdots \subseteq \mc{M}_{k-1}
		\]
		be the corresponding chain of invariant subspaces for the model contraction $T_\Theta$ acting on $\mc{H}(\Theta)$. Let the index set $\{1, \dots, k\}$ be partitioned into $r$ disjoint  subsets $J_1, \dots, J_r$ defined by
		\[
		J_1 = \{ j_1, \ldots, 1 \},\quad J_i = \{j_i, \dots, j_{i-1}+1\} \ (i=2,\dots,r),
		\]
		where $1 \le j_1 < j_2 < \cdots < j_r = k$.
		Consider the associated aggregated $r$-regular factorization
		\[
		\Theta = \Theta_{J_r} \cdots \Theta_{J_1},
		\quad \text{where} \quad
		\Theta_{J_i} \coloneqq \Theta_{j_i} \cdots \Theta_{j_{i-1}+1}.
		\]
		If
		\[
		\mc{M}_{J_1} \subseteq \cdots \subseteq \mc{M}_{J_{r-1}}
		\]
		denotes the corresponding chain of invariant subspaces for this aggregated factorization, then
		\[
		\mc{M}_{J_i} = \mc{M}_{j_i} \quad \text{for all } i = 1, \dots, r-1.
		\]
	\end{proposition}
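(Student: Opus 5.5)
We compare the two explicit descriptions of invariant subspaces supplied by Theorem~\ref{k_r_invariant_theorem}, applied once to the $k$-regular factorization and once to the aggregated $r$-regular factorization. By Proposition~\ref{partition}, the aggregated factorization $\Theta=\Theta_{J_r}\cdots\Theta_{J_1}$ is $r$-regular and each block factorization is $|J_i|$-regular. The corresponding isometries satisfy
\[
Z_k=\Big(\bigoplus_{l=r}^{1} Z_{|J_l|}^{\{j_l\},\dots,\{j_{l-1}+1\}}\Big) Z_r^{J_r,\dots,J_1},
\]
where every factor is unitary. We write $Z_r\coloneqq Z_r^{J_r,\dots,J_1}$ and $Y\coloneqq\bigoplus_l Z_{|J_l|}^{\{j_l\},\dots,\{j_{l-1}+1\}}$, so that $Z_k^*=Z_r^*Y^*$. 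Both chains lie in $\mc H(\Theta)=[H^2(\mc E_*)\oplus\ov{\Delta_\Theta L^2(\mc E)}]\ominus\mc G$, and both are of the form $\mc G_i\ominus\mc G$ with the same $\mc G$. It therefore suffices to show that the generating spaces coincide: $\mc G_{J_i}=\mc G_{j_i}$ for $i=1,\dots,r-1$.

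Fix $i$ and set $m=j_i$. The elements of $\mc G_{j_i}$ have the form
\[
\Theta_k\cdots\Theta_{m+1}u\oplus Z_k^*\big(\Delta_k\Theta_{k-1}\cdots\Theta_{m+1}u\oplus\cdots\oplus\Delta_{m+1}u\oplus v_m\oplus\cdots\oplus v_1\big),
\]
with $u\in H^2(\mc E_{m+1})$ and $v_j\in\ov{\Delta_jL^2(\mc E_j)}$. The elements of $\mc G_{J_i}$ have the form
\[
\Theta_{J_r}\cdots\Theta_{J_{i+1}}u\oplus Z_r^*\big(\Delta_{J_r}\Theta_{J_{r-1}}\cdots\Theta_{J_{i+1}}u\oplus\cdots\oplus\Delta_{J_{i+1}}u\oplus w_i\oplus\cdots\oplus w_1\big),
\]
with $w_l\in\ov{\Delta_{J_l}L^2}$. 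The first components agree, since $\Theta_{J_r}\cdots\Theta_{J_{i+1}}=\Theta_k\cdots\Theta_{m+1}$. For the second components we apply $Y$ blockwise to the vector inside $Z_r^*$.

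For each upper block $l\ge i+1$, the $l$-th block maps $\Delta_{J_l}\Theta_{J_{l-1}}\cdots\Theta_{J_{i+1}}u$ to the string $\Delta_{j_l}\Theta_{j_l-1}\cdots\Theta_{m+1}u\oplus\cdots\oplus\Delta_{j_{l-1}+1}\Theta_{j_{l-1}}\cdots\Theta_{m+1}u$. This follows from the defining formula of $Z_{|J_l|}$ with $x=\Theta_{J_{l-1}}\cdots\Theta_{J_{i+1}}u$. Concatenating over $l$ gives exactly the first $k-m$ coordinates $\Delta_k\Theta_{k-1}\cdots\Theta_{m+1}u\oplus\cdots\oplus\Delta_{m+1}u$ appearing in $\mc G_{j_i}$.

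For the lower blocks $l\le i$, the block $Z_{|J_l|}$ is unitary, so it maps $\ov{\Delta_{J_l}L^2}$ onto $\bigoplus_{j\in J_l}\ov{\Delta_jL^2}$. As $w_1,\dots,w_i$ range freely, their images therefore range over all tuples $(v_m,\dots,v_1)$.

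It follows that
\[
Y\big(\text{inner vector of }\mc G_{J_i}\big)\ \text{ranges over the same set as the inner vector of }\mc G_{j_i},
\]
and hence $Z_r^*(\cdot)=Z_k^*Y(\cdot)$ gives $\mc G_{J_i}=\mc G_{j_i}$ as sets, not merely after closure. Consequently $\mc M_{J_i}=\mc G_{J_i}\ominus\mc G=\mc M_{j_i}$.

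The step that needs care is the blockwise identification of $Y$ on the upper blocks. Two points have to be checked there. First, the unitary extension of $Z_{|J_l|}$ from the dense set $D_{A_{J_l}}x$ agrees with the formula for $L^2$ (not just $H^2$) arguments. Second, the ordering of coordinates in the direct sums is consistent with \eqref{deco_relation_zK}. Both checks are routine once one works with $L^2$ multiplication operators, as in Definition~\ref{def:k_regular} and the Remark following it.
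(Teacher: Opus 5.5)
Your proposal is correct and follows essentially the same route as the paper: both invert \eqref{deco_relation_zK} to get $(Z_r^{J_r,\dots,J_1})^* = Z_k^*\bigl(\bigoplus_{m} Z_{|J_m|}^{\{j_m\},\dots,\{j_{m-1}+1\}}\bigr)$. Both then identify the upper blocks through the defining formula of $Z_{|J_m|}$, and the lower blocks through its unitarity onto $\ov{\im\Delta_{j_m}}\oplus\cdots\oplus\ov{\im\Delta_{j_{m-1}+1}}$. Finally, both use $\Theta_{J_r}\cdots\Theta_{J_{i+1}}=\Theta_k\cdots\Theta_{j_i+1}$ to match the generating sets before taking $\ominus\mc G$. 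Your extra remarks on the $L^2$ extension and the coordinate ordering are points the paper leaves implicit, and they are indeed routine.
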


	\begin{proof}
		Fix an index $i \in \{1, \dots, r-1\}$; by definition of the partition, we clearly have $j_i < k$. By Theorem \ref{k_r_invariant_theorem}, the invariant subspace $\mc{M}_{j_i}$ corresponding to the $k$-regular factorization $\Theta = \Theta_k \cdots \Theta_1$ admits the explicit representation
		\begin{align}\label{eq:M_ji_pdf}
			\mc{M}_{j_i} &= \Big\{ \Theta_k \cdots \Theta_{j_i+1} u \oplus Z_k^* \big( \Delta_k \Theta_{k-1} \cdots \Theta_{j_i+1} u \oplus \cdots \oplus \Delta_{j_i+1} u \oplus v_{j_i}\\
			&\qquad \oplus \cdots \oplus v_1 \big) :   u \in H^2(\mc{E}_{j_i+1}), \; v_l \in \overline{\Delta_l L^2(\mc{E}_l)} \text{ for } l = 1, \dots, j_i \Big\} \ominus \mc{G},\nonumber
		\end{align}
		where $\mc{G} = \{ \Theta u \oplus \Delta_{\Theta} u : u \in H^2(\mc{E}) \}$.
		Applying the same theorem to the aggregated $r$-regular factorization $\Theta = \Theta_{J_r} \cdots \Theta_{J_1}$, the corresponding invariant subspace $\mc{M}_{J_i}$ is given by
		\begin{align}\label{eq:M_Ji_pdf}
			&\mc{M}_{J_i} = \Big\{ \Theta_{J_r} \cdots \Theta_{J_{i+1}} u \oplus (Z_r^{J_r, \dots, J_1})^* \big( \Delta_{J_r} \Theta_{J_{r-1}} \cdots \Theta_{J_{i+1}} u \oplus \cdots \oplus \Delta_{J_{i+1}} u  \oplus \\
			&~w_i  \oplus \cdots  \oplus w_1 \big) : u \in H^2(\mc{E}_{j_i+1}), \; w_m \in \overline{\Delta_{J_m} L^2(\mc{E}_{j_{m-1}+1})},  m = 1, \dots, i  \Big\} \ominus \mc{G},\nonumber
		\end{align}
		where $\Delta_{J_i}=(I-\Theta_{J_i}^*\Theta_{J_i})^{1/2}.$
		Let us recall the relation established in equation \eqref{deco_relation_zK} during the proof of Proposition \ref{partition}
		\[
		Z_k = \left( \bigoplus_{m=r}^1 Z_{|J_m|}^{\{j_m\}, \dots, \{j_{m-1}+1\}} \right) Z_r^{J_r, \dots, J_1}.
		\]
		Since all operators in this decomposition are unitary maps, we get
		\[
		(Z_r^{J_r, \dots, J_1})^* = Z_k^* \left( \bigoplus_{m=r}^1 \big( Z_{|J_m|}^{\{j_m\}, \dots, \{j_{m-1}+1\}} \big) \right).
		\]
		We now evaluate the action of $(Z_r^{J_r, \dots, J_1})^*$ on the components of the defect space. Observe that
		\begin{align*}
			&\big( Z_{|J_m|}^{\{j_m\}, \dots, \{j_{m-1}+1\}} \big) \big( \Delta_{J_m} \Theta_{J_{m-1}} \cdots \Theta_{J_{i+1}} u \big) \\
			&\quad = \Delta_{j_m} \Theta_{j_m-1} \cdots \Theta_{j_i+1} u \oplus \cdots \oplus \Delta_{j_{m-1}+1} \Theta_{j_{m-1}} \cdots \Theta_{j_i+1} u.
		\end{align*}
		For $m = i, \dots, 1$, as the arbitrary vector $w_m\in \overline{\im \Delta_{J_m} }$, and $$Z_{|J_m|}^{\{j_m\}, \dots, \{j_{m-1}+1\}}( \overline{\im\Delta_{J_m}})=\overline{\im\Delta_{j_m}}\oplus\cdots\oplus\overline{\im\Delta_{j_{m-1}+1}}$$
		Using these relations, we obtain
		\begin{align*}
			&(Z_r^{J_r, \dots, J_1})^* \big( \Delta_{J_r} \Theta_{J_{r-1}} \cdots \Theta_{J_{i+1}} u \oplus \cdots \oplus \Delta_{J_{i+1}} u \oplus w_i \oplus \cdots \oplus w_1 \big) \\
			&\quad = Z_k^* \big( \Delta_k \Theta_{k-1} \cdots \Theta_{j_i+1} u \oplus \cdots \oplus \Delta_{j_i+1} u \oplus v_{j_i} \oplus \cdots \oplus v_1 \big).
		\end{align*}
		By utilizing this equivalence, alongside the relation $$\Theta_{J_r} \cdots \Theta_{J_{i+1}} = \Theta_k \cdots \Theta_{j_i+1},$$ we conclude that $\mc{M}_{J_i} = \mc{M}_{j_i}$, which completes the proof.
	\end{proof}


	\section{Examples}

	\begin{example}
		Let $k \ge 2$ be an integer, and consider the purely contractive analytic function
		$\Theta : \mathbb{D} \to \mathbb{C}$ defined by $\Theta(z) = z^k$. We examine the factorization
		\begin{equation} \label{eq:factorization}
			\Theta(z) = \Theta_k(z)\cdots \Theta_1(z),
			\quad \text{where } \Theta_i(z) = z \text{ for each } i = 1, \dots, k.
		\end{equation}
		Each factor $\Theta_i$ is an inner function. Hence, the corresponding defect operators vanish almost everywhere on the unit circle $\mathbb{T}$; that is,
		\[
		\Delta_{\Theta}(t) = 0 \quad \text{and} \quad \Delta_i(t)= 0 \quad \text{for a.e. } t \in \mathbb{T}, \text{ for all } i \in \{1, \dots, k\}.
		\]
		Consequently, the associated isometric map
		\[
		Z_k : \overline{\Delta_{\Theta} L^2(\mathbb{T})} \to
		\overline{\Delta_k L^2(\mathbb{T})} \oplus \cdots \oplus \overline{\Delta_1 L^2(\mathbb{T})}
		\]
		is trivially a unitary operator between zero-dimensional Hilbert spaces. Therefore, the factorization \eqref{eq:factorization} is a $k$-regular factorization.
		
		The functional model operator associated with this $k$-regular factorization acts on the Hilbert space
		\[
		\boldsymbol{\mc H} = \left\{ H^2(\mathbb{T}) \oplus \overline{\Delta_k L^2(\mathbb{T})} \oplus \cdots \oplus \overline{\Delta_1 L^2(\mathbb{T})} \right\} \ominus \boldsymbol{\mc G}
		\]
		via the adjoint action
		\[
		\boldsymbol{T}^* (u \oplus v_k \oplus \cdots \oplus v_1) = e^{-it}[u(e^{it}) - u(0)] \oplus e^{-it}v_k(t) \oplus \cdots \oplus e^{-it}v_1(t),
		\]
		where the subspace $\boldsymbol {\mc G}$ is defined as
		\[
		\boldsymbol{\mc G} = \{ \Theta_k \cdots \Theta_1 u \oplus \Delta_k \Theta_{k-1} \cdots \Theta_1 u \oplus \cdots \oplus \Delta_1 u : u \in H^2(\mathbb{T}) \}.
		\]
		Given that $\Theta_i(z) = z$ and $\Delta_i(t) = 0$ for each $i$, it follows that
		\[
		\boldsymbol{\mc G} = z^k H^2(\mathbb{T}).
		\]
		Consequently, the corresponding model space is
		\[
		\boldsymbol{\mc H} = H^2(\mathbb{T}) \ominus z^k H^2(\mathbb{T}).
		\]
		In this case, the operator $\boldsymbol{T}$ can be identified as the compression of the forward shift to $\boldsymbol{\mc H}$. Observe that
		\[
		\boldsymbol{\mc H} = \operatorname{span} \{ 1, z, z^2, \dots, z^{k-1} \}.
		\]
		The invariant subspaces corresponding to this $k$-regular factorization are given as follows
		\begin{align*}
			\boldsymbol{\mc M}_1 &= \left\{ \Theta_k \cdots \Theta_2 u_2 \oplus \Delta_k \Theta_{k-1} \cdots \Theta_2 u_2 \oplus \cdots \oplus \Delta_2 u_2 \oplus v_1 \right.
			\\
			&\qquad \left. \quad: u_2 \in H^2(\mathbb{T}), v_1 \in \overline{\Delta_1 L^2(\mathbb{T})} \right\} \ominus \boldsymbol{\mc G} \\
			&= z^{k-1} H^2(\mathbb{T}) \ominus z^k H^2(\mathbb{T}) = \text{span} \{ z^{k-1} \}, \\
			\boldsymbol{\mc M}_2 &= z^{k-2} H^2(\mathbb{T}) \ominus z^k H^2(\mathbb{T}) = \text{span} \{ z^{k-2}, z^{k-1} \}, \\
			&\vdots \\
			\boldsymbol{\mc M}_{k-1} &= z H^2(\mathbb{T}) \ominus z^k H^2(\mathbb{T}) = \text{span} \{ z, \dots, z^{k-1} \}.
		\end{align*}
		Consequently, from the above expressions for the invariant subspaces, we obtain the following inclusions
		\[
		\boldsymbol{\mc M}_1 \sub \boldsymbol{\mc M}_2 \sub \cdots \sub \boldsymbol{\mc M}_{k-1}.
		\]
	\end{example}
	
	
	\begin{example}
		Let $c \in \mathbb{C}$ be a constant such that $|c| < 1$. Define the purely contractive analytic function
		$\Theta : \mathbb{D} \to {B}(\mathbb{C}^2)$ by
		\begin{equation}
			\Theta(z) =
			\begin{pmatrix}
				z^3 & 0 \\
				0 & c
			\end{pmatrix},
			\qquad z \in \mathbb{D}.
		\end{equation}
		Consider the factorization
		\begin{equation}\label{eq:4_regular_fact}
			\Theta(z) = \Theta_4(z)\Theta_3(z)\Theta_2(z)\Theta_1(z),
		\end{equation}
		where the factors are given by
		\begin{equation}
			\Theta_4(z) =
			\begin{pmatrix}
				1 & 0 \\
				0 & c
			\end{pmatrix},
			\qquad
			\Theta_3(z) = \Theta_2(z) = \Theta_1(z) =
			\begin{pmatrix}
				z & 0 \\
				0 & 1
			\end{pmatrix}.
		\end{equation}
		The corresponding defect operators are given by
		\begin{align}\label{eq:defect_ops}
			\Delta_1(t) &= \Delta_2(t) = \Delta_3(t) = 0, \\
			~
			\Delta_{\Theta}(t) &= \Delta_4(t) = \operatorname{diag}\!\left(0, \sqrt{1 - |c|^2}\right),
			~ \text{for a.e. } t \in \mathbb{T}.
		\end{align}
		The isometric operator $Z_4$ associated with the factorization \eqref{eq:4_regular_fact} is defined by
		\begin{align*}
			Z_4 : \overline{\Delta_{\Theta} L^2(\mathbb{T}, \mathbb{C}^2)}
			\longrightarrow
			\overline{\Delta_4 L^2(\mathbb{T}, \mathbb{C}^2)}
			\oplus
			\overline{\Delta_3 L^2(\mathbb{T}, \mathbb{C}^2)}
			\oplus
			\overline{\Delta_2 L^2(\mathbb{T}, \mathbb{C}^2)}
			\oplus
			\overline{\Delta_1 L^2(\mathbb{T}, \mathbb{C}^2)}\\
			Z_4(\Delta_{\Theta} u)
			=
			\Delta_4 \Theta_3 \Theta_2 \Theta_1 u
			\oplus
			\Delta_3 \Theta_2 \Theta_1 u
			\oplus
			\Delta_2 \Theta_1 u
			\oplus
			\Delta_1 u,\quad u \in L^2(\mathbb{T}, \mathbb{C}^2) .
		\end{align*}
		Using the defect operator relations in \eqref{eq:defect_ops}, the codomain simplifies to
		\[
		\overline{\Delta_4 L^2(\mathbb{T}, \mathbb{C}^2)} \oplus \{0\} \oplus \{0\} \oplus \{0\}.
		\]
		Moreover, for a.e.\ $t \in \mathbb{T}$, we have \begin{align*}
			\Delta_4(t)\Theta_3(e^{it})\Theta_2(e^{it})\Theta_1(e^{it})
			&=
			\operatorname{diag}\!\left(0,\sqrt{1-|c|^2}\right)
			\operatorname{diag}\!\left(e^{3it},1\right) \\
			&=
			\operatorname{diag}\!\left(0,\sqrt{1-|c|^2}\right).
		\end{align*}
		Consequently, the mapping $Z_4$ reduces to
		\[
		Z_4(\Delta_{\Theta} u) = \Delta_4 u \oplus 0 \oplus 0 \oplus 0.
		\]
		Since $\Delta_{\Theta} = \Delta_4$, it follows that $Z_4$ is surjective. Therefore, the factorization \eqref{eq:4_regular_fact} is a $4$-regular factorization.
		
		The functional model operator $\boldsymbol{T}$ corresponding to the $4$-regular factorization \eqref{eq:4_regular_fact} acts on the Hilbert space
		\[
		\boldsymbol{\mc H}
		\coloneqq
		\Bigl[
		H^2(\mc E_{5})
		\oplus
		\overline{\Delta_4 L^2(\mathbb{T}, \mc E_4)}
		\oplus \cdots \oplus
		\overline{\Delta_1 L^2(\mathbb{T}, \mc E_1)}
		\Bigr]
		\ominus \boldsymbol{\mc G},
		\]
		via the adjoint action
		\[
		\boldsymbol{T}^*(u \oplus v_4 \oplus \cdots \oplus v_1)
		=
		\bigl(e^{-it}(u - u(0))\bigr)
		\oplus
		e^{-it} v_4
		\oplus \cdots \oplus
		e^{-it} v_1,
		\]
		where $u \in H^2(\mathbb{T}, \mc E_5)$ and $v_i \in \overline{\Delta_i L^2(\mathbb{T}, \mc E_i)}$ for $i = 1, \dots, 4$. The spaces are given by $\mc E_i = \mathbb{C}^2$ for $i = 1, \dots, 5$.
		The subspace $\boldsymbol{\mc G}$ is defined by
		\[
		\boldsymbol{\mc G}
		=
		\left\{
		\Theta_4 \cdots \Theta_1 u
		\oplus
		\Delta_4 \Theta_3 \cdots \Theta_1 u
		\oplus \cdots \oplus
		\Delta_1 u
		:
		u \in H^2(\mc E_1)
		\right\}.
		\]
		Moreover, under the $4$-regular factorization \eqref{eq:4_regular_fact}, the associated invariant subspaces $\boldsymbol{\mc{M}}_i$ for $i=1, 2, 3$ are structurally given by
		\begin{align*}
			\boldsymbol{\mc{M}}_i
			=
			\Big\{
			&\Theta_4 \cdots \Theta_{i+1} u_{i+1}
			\oplus
			\Delta_4 \Theta_{3} \cdots \Theta_{i+1} u_{i+1}
			\oplus \cdots \oplus
			\Delta_{i+1} u_{i+1} \\
			&\oplus v_i \oplus \cdots \oplus v_1
			:
			u_{i+1} \in H^2(\mc{E}_{i+1}),
			\;
			v_j \in \overline{\Delta_j L^2(\mc{E}_j)},
			\ j = 1, \dots, i
			\Big\}
			\ominus \boldsymbol{\mc G}.
		\end{align*}
		Utilizing the defect relations established in \eqref{eq:defect_ops}, the subspace $\boldsymbol{\mc{G}}$ explicitly evaluates to
		\begin{equation*}
			\boldsymbol{\mc{G}} = \left\{
			\begin{pmatrix} z^3 & 0 \\ 0 & c \end{pmatrix} u
			\oplus
			\begin{pmatrix} 0 \\ \sqrt{1-|c|^2} u_2 \end{pmatrix}
			\oplus 0 \oplus 0 \oplus 0 : u = \begin{pmatrix} u_1 \\ u_2 \end{pmatrix} \in H^2(\mb C^2)
			\right\}.
		\end{equation*}
		The functional model space is therefore reduced to
		\begin{equation*}
			\boldsymbol{\mc{H}} = \left[ H^2(\mb C^2) \oplus \ov{\Delta_{\Theta} L^2(\mb C^2)} \oplus \{0\} \oplus \{0\} \oplus \{0\} \right] \ominus \boldsymbol{\mc{G}}.
		\end{equation*}
		The adjoint of the model operator $\boldsymbol{T}^*$ effectively acts as
		\begin{equation*}
			\boldsymbol{T}^*(u \oplus v_4 \oplus 0 \oplus 0 \oplus 0) = e^{-it}\bigl(u(e^{it}) - u(0)\bigr) \oplus e^{-it}v_4(t) \oplus 0 \oplus 0 \oplus 0.
		\end{equation*}
		The invariant subspaces $\boldsymbol{\mc{M}}_1, \boldsymbol{\mc{M}}_2, \boldsymbol{\mc{M}}_3$ are determined as follows.
		Using the relations $\Theta_4 \Theta_3 \Theta_2 = \diag(z^2, c)$ and $\Delta_4\Theta_{3}\Theta_2=\Delta_{\Theta}$, we obtain
		\begin{align*}
			\boldsymbol{\mc{M}}_1 = \left\{
			\begin{pmatrix} z^2 & 0 \\ 0 & c \end{pmatrix} w \oplus \Delta_{\Theta} w \oplus 0 \oplus 0 \oplus 0 : w \in H^2(\mb C^2)
			\right\} \ominus \boldsymbol{\mc{G}}.
		\end{align*}
		Using the partial product $\Theta_4 \Theta_3 = \diag(z, c)$ and $\Delta_4\Theta_{3}=\Delta_{\Theta}$, we get
		\begin{equation*}
			\boldsymbol{\mc{M}}_2 = \left\{
			\begin{pmatrix} z & 0 \\ 0 & c \end{pmatrix} w \oplus \Delta_{\Theta} w \oplus 0 \oplus 0 \oplus 0 : w \in H^2(\mb C^2)
			\right\} \ominus \boldsymbol{\mc{G}}.
		\end{equation*}
		Using the relations $\Theta_4 = \diag(1, c)$ and $\Delta_4=\Delta_{\Theta}$, we have
		\begin{equation*}
			\boldsymbol{\mc{M}}_3 = \left\{
			\begin{pmatrix} 1 & 0 \\ 0 & c \end{pmatrix} w \oplus \Delta_{\Theta} w \oplus 0 \oplus 0 \oplus 0 : w \in H^2(\mb C^2)
			\right\} \ominus \boldsymbol{\mc{G}}.
		\end{equation*}
		The chain of inclusions $\boldsymbol{\mc{M}}_1 \sub \boldsymbol{\mc{M}}_2 \sub \boldsymbol{\mc{M}}_3$ follows directly from the nested structure of the scalar Hardy spaces $z^2 H^2(\mb C) \sub z H^2(\mb C) \sub H^2(\mb C)$.
	\end{example}
	
	
	In Theorem 2 of \cite{NF67}, Sz.-Nagy and Foia\c{s} explicitly compute the factorization of the characteristic function for a contraction in the block upper triangular matrix form. In the following example, we consider a specific contraction of this type to determine whether the associated factorization of its characteristic function is $3$-regular.
	
	\begin{example}
		Let $S: H^2(\mathbb{D}) \longrightarrow H^2(\mathbb{D})$ be the shift operator on the scalar-valued Hardy space. Consider
		\[
		A = \bigoplus_{0}^{\infty} S, \quad B = \bigoplus_{0}^{\infty} S^*
		\]
		and let $L: \mathcal{D}_B \longrightarrow \mathcal{D}_{A^*}$ be a proper contraction (i.e., $\|Lx\| < \|x\|$ for all non-zero $x \in \mathcal{D}_B$). Then the operator
		\[
		T = \begin{bmatrix} A & D_{A^*} L D_B \\ 0 & B \end{bmatrix} : \begin{matrix} \mathcal{H}_1 \\ \oplus \\ \mathcal{H}_2 \end{matrix} \longrightarrow \begin{matrix} \mathcal{H}_1 \\ \oplus \\ \mathcal{H}_2 \end{matrix}
		\]
		is a c.n.u. contraction (see \cite[Condition 1.10]{NF67}). Consider the factorization of the characteristic function given in \cite{NF67}:
		\begin{align}\label{clasical_fact}
			\Theta_T(z) = \tau_*^{-1} \begin{bmatrix} \Theta_B(z) & 0 \\ 0 & I_{\mathcal{D}_{L^*}} \end{bmatrix} \begin{bmatrix} L^* & D_L \\ D_{L^*} & -L \end{bmatrix} \begin{bmatrix} \Theta_A(z) & 0 \\ 0 & I_{\mathcal{D}_L} \end{bmatrix} \tau
		\end{align}
		where $\tau \in \mathcal{B}(\mathcal{D}_T, \mathcal{D}_A \oplus \mathcal{D}_L)$ and $\tau_* \in \mathcal{B}(\mathcal{D}_{T^*}, \mathcal{D}_{B^*} \oplus \mathcal{D}_{L^*})$ are unitary operators.
		Set
		\[
		\Theta_1(z) = \begin{bmatrix} \Theta_A(z) & 0 \\ 0 & I_{\mathcal{D}_L} \end{bmatrix} \tau, ~
		\Theta_2(z) = \begin{bmatrix} L^* & D_L \\ D_{L^*} & -L \end{bmatrix}, \quad
		\Theta_3(z) = \tau_*^{-1} \begin{bmatrix} \Theta_B(z) & 0 \\ 0 & I_{\mathcal{D}_{L^*}} \end{bmatrix}.
		\]
		By Propositions \ref{local_prop} and \ref{equivalent_prop}, the factorization $\Theta_T(z) = \Theta_3(z)\Theta_2(z)\Theta_1(z)$ is $3$-regular if and only if
		\begin{equation}\label{condition_1}
			\Delta_{\Theta_2}(t) (\mathcal{D}_{A^*} \oplus \mathcal{D}_L) \cap \Delta_{\Theta_1^*}(t) (\mathcal{D}_{A^*} \oplus \mathcal{D}_L) = \{0\} \quad \text{ for a.e. t }\in [0,2 \pi]
		\end{equation}
		and
		\begin{equation}\label{condition_2}
			\Delta_{\Theta_3}(t) (\mathcal{D}_B \oplus \mathcal{D}_{L^*}) \cap \Delta_{\Theta_2^* \Theta_1^*}(t) (\mathcal{D}_B \oplus \mathcal{D}_{L^*}) = \{0\} \quad  \text{ for a.e. t }\in [0,2 \pi]
		\end{equation}
		Because $\Theta_2(e^{it})$ is a unitary operator, we have $\Delta_{\Theta_2}(t) = 0$ a.e. Consequently, condition \eqref{condition_1} trivially holds.
		Now, observe that
		\[
		\Delta_{\Theta_3}^2(t) = I_{\mathcal{D}_B \oplus \mathcal{D}_{L^*}} -  \begin{bmatrix} \Theta_B^*(e^{it}) & 0 \\ 0 & I_{\mathcal{D}_{L^*}} \end{bmatrix} \tau_* \tau_*^{-1} \begin{bmatrix} \Theta_B(e^{it}) & 0 \\ 0 & I_{\mathcal{D}_{L^*}} \end{bmatrix};
		\]
		hence, we obtain
		\begin{equation}\label{delta3}
			\Delta_{\Theta_3}(t) = \begin{bmatrix} \Delta_{\Theta_B}(t) & 0 \\ 0 & 0 \end{bmatrix}.
		\end{equation}
		Similarly, since $\Delta_{\Theta_1^* \Theta_2^*}^2(t) = I - \Theta_2(e^{it}) \Theta_1 (e^{it})\Theta_1^*(e^{it}) \Theta_2^*(e^{it})$, we find
		\[
		\Delta_{\Theta_1^* \Theta_2^*}^2(t) = \begin{bmatrix} L^* & D_L \\ D_{L^*} & -L \end{bmatrix} \begin{bmatrix} \Delta_{\Theta_A^*}^2(t) & 0 \\ 0 & 0 \end{bmatrix} \begin{bmatrix} L & D_{L^*} \\ D_L & -L^* \end{bmatrix},
		\]
		which implies
		\begin{equation}\label{delta21}
			\Delta_{\Theta_1^* \Theta_2^*}(t) = \begin{bmatrix} L^* & D_L \\ D_{L^*} & -L \end{bmatrix} \begin{bmatrix} \Delta_{\Theta_A^*}(t) & 0 \\ 0 & 0 \end{bmatrix} \begin{bmatrix} L & D_{L^*} \\ D_L & -L^* \end{bmatrix}.
		\end{equation}
		Using relations \eqref{delta3} and \eqref{delta21}, condition \eqref{condition_2} reduces to
		\[
		\left(\Delta_{\Theta_B}(t)\mathcal{D}_B \oplus \{0\}\right) \cap \begin{bmatrix} L^* & D_L \\ D_{L^*} & -L \end{bmatrix} \begin{bmatrix} \Delta_{\Theta_A^*}(t) & 0 \\ 0 & 0 \end{bmatrix} \begin{bmatrix} \mathcal{D}_{A^*} \\ \mathcal{D}_{L^*} \end{bmatrix} = \{0\}.
		\]
		
		Since $S$ is a shift operator, the characteristic functions $\Theta_{S}$ and $\Theta_{S^*}$ are identically zero. Thus, $\Theta_B \equiv 0$ and $\Theta_A \equiv 0$, which yields $\Delta_{\Theta_B}(t) = I_{\mc D_B}$ and $\Delta_{\Theta_A^*}(t) = I_{\mc D_{A^*}}$. We then have
		\begin{align*}
			\left(\Delta_{\Theta_B}(t)\mathcal{D}_B \oplus \{0\}\right) &\cap \begin{bmatrix} L^* & D_L \\ D_{L^*} & -L \end{bmatrix} \begin{bmatrix} \Delta_{\Theta_A^*}(t) & 0 \\ 0 & 0 \end{bmatrix} \begin{bmatrix} \mathcal{D}_{A^*} \\ \mathcal{D}_{L^*} \end{bmatrix}\\
			&=\left(\mathcal{D}_B \oplus \{0\}\right) \cap \begin{bmatrix} L^* & D_L \\ D_{L^*} & -L \end{bmatrix} \begin{bmatrix} \mathcal{D}_{A^*} \\ \{0\} \end{bmatrix} \\
			&= \left(\mathcal{D}_B \oplus \{0\}\right) \cap \{ L^*x \oplus D_{L^*}x : x \in \mathcal{D}_{A^*} \}.
		\end{align*}
		Since $D_{L^*}$ is injective (as $L$ is a proper contraction), we obtain
		\[
		\left(\mathcal{D}_B \oplus \{0\}\right) \cap \{ L^*x \oplus D_{L^*}x : x \in \mathcal{D}_{A^*} \} = \{0\}.
		\]
		Hence, the factorization
		\[
		\Theta_T(z) = \tau_*^{-1} \begin{bmatrix} \Theta_B(z) & 0 \\ 0 & I_{\mathcal{D}_{L^*}} \end{bmatrix} \begin{bmatrix} L^* & D_L \\ D_{L^*} & -L \end{bmatrix} \begin{bmatrix} \Theta_A(z) & 0 \\ 0 & I_{\mathcal{D}_L} \end{bmatrix} \tau
		\]
		is a $3$-regular factorization.
	\end{example}
	
	For contractions in the block upper triangular matrix form, it was shown in \cite{HM2026} that the classical factorization \eqref{clasical_fact} of the characteristic function, introduced by Sz.-Nagy and Foia\c{s} in \cite{NF67}, is not regular in general; counterexamples were also provided. Moreover, it is proved that for a large class of c.n.u.\ contractions, including pure contractions, the factorization is $3$-regular.

	\section{$k$-Regular Factorizations Associated with Commuting $k$-Tuples of Contractions}
	For an integer \(k \geq 2\), let \((T_1, T_2, \dots, T_k)\) be a \(k\)-tuple of commuting contractions on a Hilbert space \(\mc H\). Let \(S_k\) denote the symmetric group consisting of all permutations of the index set \(\{1,2,\dots,k\}\). For each permutation \(\sigma =\{\sigma(1),\dots,\sigma(k)\}\in S_k\), we consider the ordered product
	\[
	T_\sigma \coloneqq T_{\sigma(1)} T_{\sigma(2)} \cdots T_{\sigma(k)}.
	\]
	Since \(T_1, T_2, \dots, T_k\) commute pairwise, the product \(T_\sigma\) is independent of the choice of \(\sigma \in S_k\); therefore, we denote it by \(T\). For each \(\sigma \in S_k\), define the associated isometry
	\[
	Z_\sigma :
	\mathcal D_{T}
	\longrightarrow
	\bigoplus_{r=1}^{k} \mc D_{T_{\sigma(r})}
	\]
	by
	\[
	Z_\sigma(D_T h)
	\coloneqq
	\bigoplus_{r=1}^{k}
	D_{T_{\sigma(r)}}
	T_{\sigma(r+1)}\cdots T_{\sigma(k)} h,
	\qquad h \in \mc H,
	\]
	where, for \(r=k\), the product \(T_{\sigma(r+1)} \cdots T_{\sigma(k)}\) is understood to be \(I_{\mc H}\).
	The factorization
	\begin{equation}\label{sigma_fac}
		T
		=
		T_{\sigma(1)} T_{\sigma(2)} \cdots T_{\sigma(k)}
	\end{equation}
	is \(k\)-regular whenever the associated operator \(Z_\sigma\) is unitary. Furthermore, if the factorization \eqref{sigma_fac}
	is a \(k\)-regular factorization for every permutation \(\sigma \in S_k\), then the commuting \(k\)-tuple \((T_1, T_2, \dots, T_k)\) is called a \emph{symmetric \(k\)-regular tuple}.
	A natural question arising in this context is the following: if, for some \(\sigma \in S_k\), the factorization \eqref{sigma_fac} is \(k\)-regular, does it follow that the commuting \(k\)-tuple \((T_1, T_2, \dots, T_k)\) is symmetric \(k\)-regular? For pairs of commuting contractions, this question was answered by J.~A.~Ball and H.~Sau in Theorem~4.3.6 of \cite{Ba23a}. We extend their result to commuting \(k\)-tuples as follows.
	\begin{theorem}\label{k_commuting}
		Let \((T_1, T_2, \dots, T_k)\) be a commuting \(k\)-tuple of contractions on a Hilbert space \(\mc H\).
		\begin{enumerate}
			\item Assume that the defect spaces \(\mc{D}_{T}\) is finite-dimensional, where $T=T_1\cdots T_k$. If there exists \(\sigma \in S_k\) such that the factorization
			\[
			T=T_{\sigma(1)} T_{\sigma(2)} \cdots T_{\sigma(k)}
			\]
			is \(k\)-regular, then the commuting \(k\)-tuple \((T_1, T_2, \dots, T_k)\) is symmetric \(k\)-regular.
			
			\item In the infinite-dimensional setting (that is, when $\dim \mathcal D_T=\infty$), the existence of \(\sigma \in S_k\) such that
			\[
			T=T_{\sigma(1)} T_{\sigma(2)} \cdots T_{\sigma(k)}
			\]
			is \(k\)-regular does not imply that the commuting \(k\)-tuple \((T_1, T_2, \dots, T_k)\) is symmetric \(k\)-regular.
			
			\item If \((T_1, T_2, \dots, T_k)\) is a commuting \(k\)-tuple of isometries, then \((T_1, T_2, \dots, T_k)\) is symmetric \(k\)-regular.
		\end{enumerate}
	\end{theorem}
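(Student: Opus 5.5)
For part (1), the plan is to use the dimension criterion of Proposition \ref{k_regular_pro}(iv). Its key point is that the target space of $Z_\sigma$ is $\bigoplus_{r=1}^k \mc D_{T_{\sigma(r)}}$, which, up to reordering the summands, does not depend on $\sigma$. Suppose $T = T_{\sigma(1)}\cdots T_{\sigma(k)}$ is $k$-regular for some $\sigma$. Then $Z_\sigma$ is unitary, so
\[
\dim \mc D_T = \sum_{r=1}^k \dim \mc D_{T_r}.
\]
In particular, every $\mc D_{T_r}$ is finite-dimensional, because $\dim\mc D_T<\infty$. Now fix any other permutation $\tau\in S_k$. The map $Z_\tau$ is an isometry from $\mc D_T$ into a space of the same finite dimension $\sum_r \dim\mc D_{T_r}$, so it is surjective and hence unitary. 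Equivalently, the equality in Proposition \ref{k_regular_pro}(iv) holds for $\tau$, and the ``moreover'' clause there gives $k$-regularity of the $\tau$-factorization. Commutativity is used only to guarantee that every $T_\tau$ equals the same operator $T$, so that all the $Z_\tau$ share the domain $\mc D_T$.

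For part (3), I would check directly that each $Z_\sigma$ is unitary. If every $T_r$ is an isometry, then $D_{T_r}=0$, so $\mc D_{T_r}=\{0\}$ for each $r$. The product $T$ is also an isometry, so $\mc D_T=\{0\}$. Each $Z_\sigma$ is therefore the zero map between zero spaces, which is trivially unitary. Alternatively, one can invoke Proposition \ref{k_regular_pro}(ii) for every ordering, since $T$ is an isometry and all the factors are isometries.

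Part (2) needs a counterexample, and this is the main obstacle. The natural approach is to reduce to the case $k=2$, where Ball and Sau (Theorem 4.3.6 of \cite{Ba23a}) give a commuting pair $(A,B)$ with $\dim\mc D_{AB}=\infty$ such that $AB$ is $2$-regular but $BA$ is not. I would set $T_1=A$, $T_2=B$, and $T_3=\cdots=T_k=I_{\mc H}$. Since $D_I=0$, the identity factors contribute zero defect summands. Using Corollary \ref{cor3}, or more directly the definition, $Z_\sigma$ for $(T_1,\dots,T_k)$ is, after discarding the zero summands, exactly the $Z_2$ map of whichever of the orderings $AB$ or $BA$ is induced by $\sigma$. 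Hence any $\sigma$ that places $1$ before $2$ gives a $k$-regular factorization, while any $\sigma$ that places $2$ before $1$ does not. The tuple is therefore not symmetric $k$-regular.

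The delicate point is to make sure the example cited from \cite{Ba23a} really is of this type. If it is not explicit there, I would construct one with shift-type operators in infinite multiplicity. A natural attempt is to take $A$ and $B$ on $H^2(\mc E)$ with $\dim\mc E=\infty$ as commuting functions of a shift together with an infinite-rank constant coupling. I would then check regularity for one ordering and failure for the other through the intersection criterion of Proposition \ref{equivalent_prop}(iii): regularity of $AB$ means $D_A\mc H\cap D_{B^*}\mc H=\{0\}$, while failure of $BA$ means $D_B\mc H\cap D_{A^*}\mc H\neq\{0\}$. The failure must come from a nonclosed range, which is why infinite dimensions are necessary here.
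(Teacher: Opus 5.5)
Your proposal is correct and follows the paper's proof. Part (1) is the same dimension count via Proposition \ref{k_regular_pro}(iv). Part (3) is immediate: the paper cites Proposition \ref{k_regular_pro}(iii), while you note that all defect spaces vanish. Part (2) rests on the same Ball--Sau pair from \cite{Ba23a}. Your padding $T_3=\cdots=T_k=I_{\mc H}$ is a useful addition. The paper cites only the $k=2$ example and does not say how it gives a counterexample for $k\ge 3$; your observation that $D_I=0$ reduces each $Z_\sigma$ to the $Z_2$ of $AB$ or $BA$ supplies that step.
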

	\begin{proof}
		A counterexample for part~(ii) is provided in the proof of Theorem~4.3.6 of \cite{Ba23a}, while part~(iii) follows from Proposition~\eqref{k_regular_pro}(iii). Thus, it remains only to prove part~(i). Since the factorization
		\[
		T=T_{\sigma(1)} T_{\sigma(2)} \cdots T_{\sigma(k)}
		\]
		is \(k\)-regular, the associated isometry
		\[
		Z_\sigma :
		\mathcal D_{T}
		\longrightarrow
		\bigoplus_{r=1}^{k} \mc D_{T_{\sigma(r)}}
		\]
		is unitary. Consequently,
		\[
		\dim(\mathcal D_T)
		=
		\dim(\mathcal D_{T_{\sigma(1)}})
		+\cdots+
		\dim(\mathcal D_{T_{\sigma(k)}}).
		\]
		Let \(\tau \in S_k\) be arbitrary, and consider the isometry associated with the factorization
		\[
		T=T_{\tau(1)} T_{\tau(2)} \cdots T_{\tau(k)},
		\]
		namely,
		\[
		Z_\tau :
		\mathcal D_T
		\longrightarrow
		\bigoplus_{r=1}^{k} \mc D_{T_{\tau(r)}}.
		\]
		Since permutations preserve the collection of defect spaces,
		\[
		\dim(\mathcal D_{T_{\sigma(1)}})
		+\cdots+
		\dim(\mathcal D_{T_{\sigma(k)}})
		=
		\dim(\mathcal D_{T_{\tau(1)}})
		+\cdots+
		\dim(\mathcal D_{T_{\tau(k)}}),
		\]
		it follows that
		\[
		\dim(\mathcal D_T)
		=
		\dim(\mathcal D_{T_{\tau(1)}})
		+\cdots+
		\dim(\mathcal D_{T_{\tau(k)}}).
		\]
		Hence, by Proposition~\eqref{k_regular_pro}(iv), the operator \(Z_\tau\) is unitary. Therefore, the factorization
		\[
		T=T_{\tau(1)} T_{\tau(2)} \cdots T_{\tau(k)}
		\]
		is \(k\)-regular. Since \(\tau \in S_k\) was arbitrary, the commuting \(k\)-tuple \((T_1,T_2,\dots,T_k)\) is symmetric \(k\)-regular.
	\end{proof}

	Let \((T_1,\dots,T_k)\) be a \(k\)-tuple of commuting contractions on a Hilbert space \(\mathcal H\). A \(k\)-tuple of commuting isometries \((V_1,\dots,V_k)\) on a Hilbert space \(\mathcal K \supseteq \mathcal H\) is called a \emph{commuting isometric dilation} of \((T_1,\dots,T_k)\) if
	\[
	T_1^{n_1}\cdots T_k^{n_k}
	=
	P_{\mathcal H}
	V_1^{n_1}\cdots V_k^{n_k}\big|_{\mathcal H}
	\]
	for all \((n_1,\dots,n_k)\in \mathbb N^k.\) A commuting \(k\)-tuple of contractions \((T_1,\dots,T_k)\) is said to satisfy the \emph{von Neumann inequality} if
	\[
	\|p(T_1,\dots,T_k)\|
	\le
	\sup_{(z_1,\dots,z_k)\in \mathbb D^k}
	|p(z_1,\dots,z_k)|
	\]
	for every polynomial \(p\in \mathbb C[z_1,\dots,z_k],\) where
	\[
	\mathbb D^k
	=
	\{(z_1,\dots,z_k)\in \mathbb C^k:\ |z_i|<1,\ 1\le i\le k\}
	\]
	denotes the  $k$-polydisk in $\mathbb{C}^k.$
	
	To formalize our investigation, we define the following classes of commuting contractions
	\begin{align*}
		\mathcal{VNI}_k&(\mathcal H)\\
		&\coloneqq
		\Big\{
		(T_1,\dots,T_k)\in \mathcal B(\mathcal H)^k :
		T_i T_j = T_j T_i \text{ for all } i,j, \\
		&\qquad\ \ \text{and } (T_1,\dots,T_k)
		\text{ satisfies the von Neumann inequality}
		\Big\},\\
		\mathcal{CID}_k&(\mathcal H)\\
		&\coloneqq
		\Big\{
		(T_1,\dots,T_k)\in \mathcal B(\mathcal H)^k :
		T_i T_j = T_j T_i \text{ for all } i,j, \\
		&\qquad\ \ \text{and } (T_1,\dots,T_k)
		\text{ admits a commuting isometric dilation}
		\Big\},\\
		\mathcal{SR}_k&(\mathcal H)\\
		&\coloneqq
		\Big\{
		(T_1,\dots,T_k)\in \mathcal B(\mathcal H)^k :
		T_i T_j = T_j T_i \text{ for all } i,j, \\
		&\qquad\ \ \text{and } (T_1,\dots,T_k)
		\text{ is a symmetric } k\text{-regular tuple}
		\Big\}.
	\end{align*}
	
	It is a well-established result that if a commuting $k$-tuple of contractions admits a commuting isometric dilation, it necessarily satisfies the von Neumann inequality. Consequently, we have the inclusion:
	$$\mathcal{CID}_k(\mathcal H) \sub \mathcal{VNI}_k(\mathcal H).$$
	It is a classical result of Sz.-Nagy \cite{Sz53a} that a single contraction (the $k=1$ case) always admits an isometric dilation. Furthermore, T.~And\^o \cite{An63a} established that any pair of commuting contractions (the $k=2$ case) possesses a commuting isometric dilation. Therefore, for $k \le 2$, commuting isometric dilations universally exist, and the von Neumann inequality universally holds.
	
	However, for $k \ge 3$, this elegant theory breaks down, and the inclusion $\mathcal{CID}_k(\mathcal H) \subsetneq \mathcal{VNI}_k(\mathcal H)$ becomes strictly proper. S.~Parrott demonstrated this strict proper inclusion by constructing a commuting $3$-tuple that satisfies the von Neumann inequality yet fails to admit a commuting isometric dilation. Furthermore, the von Neumann inequality itself fails to hold in general for three or more operators, as demonstrated by the explicit counterexamples provided by S.~Kaijser and N.~Th.~Varopoulos, along with M.~J.~Crabb and A.~M.~Davie.
	
	The structural failures that emerge for $k \ge 3$ motivate further exploration to better understand the conditions under which these dilations exist. To this end, we investigate the class of symmetric $k$-regular tuples, denoted $\mathcal{SR}_k(\mathcal H)$, and explore its relationship with $\mathcal{VNI}_k(\mathcal H)$ and $\mathcal{CID}_k(\mathcal H)$. We first observe that for $k=2$, the classes $\mathcal{VNI}_2(\mathcal H)$ and $\mathcal{CID}_2(\mathcal H)$ strictly contain $\mathcal{SR}_2(\mathcal H)$. This occurs because there exist commuting contractions whose product is not $2$-regular, despite automatically admitting a commuting isometric dilation via And\^o's theorem. Given this distinction, our primary objective is to analyze the classical $3$-tuple counterexamples constructed by Parrott, Crabb--Davie, and Kaijser--Varopoulos, to determine precisely whether these specific tuples satisfy the conditions of symmetric $3$-regularity.

	The following two examples from \cite{Va74a,CD75a} violate the von Neumann inequality and therefore fail to admit commuting isometric dilations.
	\begin{example}[S.~Kaijser and N.~Th.~Varopoulos,\cite{Va74a}]
		Let \(\mc H=\mb C^5\), and consider the commuting  contractions \(T_1,T_2,T_3\in B(\mb C^5)\) given by
		\[
		T_1=
		\begin{pmatrix}
			0&0&0&0&0\\
			1&0&0&0&0\\
			0&0&0&0&0\\
			0&0&0&0&0\\
			0&\frac1{\sqrt3}&-\frac1{\sqrt3}&-\frac1{\sqrt3}&0
		\end{pmatrix},
		\quad
		T_2=
		\begin{pmatrix}
			0&0&0&0&0\\
			0&0&0&0&0\\
			1&0&0&0&0\\
			0&0&0&0&0\\
			0&-\frac1{\sqrt3}&\frac1{\sqrt3}&-\frac1{\sqrt3}&0
		\end{pmatrix},
		\]
		\[
		T_3=
		\begin{pmatrix}
			0&0&0&0&0\\
			0&0&0&0&0\\
			0&0&0&0&0\\
			1&0&0&0&0\\
			0&-\frac1{\sqrt3}&-\frac1{\sqrt3}&\frac1{\sqrt3}&0
		\end{pmatrix}.
		\]
		A direct computation yields
		\[
		T=T_1T_2T_3=0.
		\]
		Hence
		\[
		D_T=(I-T^*T)^{1/2}=I_5,
		\qquad
		\dim\mc D_T=5.
		\]
		Moreover,
		\[
		D_{T_1}=(I-T_1^*T_1)^{1/2},\qquad
		D_{T_2}=(I-T_2^*T_2)^{1/2},\qquad
		D_{T_3}=({I-T_3^*T_3})^{1/2}
		\]
		and each of these operators is an orthogonal projection of rank \(3\). Consequently,
		\[
		\dim\mc D_{T_1}
		=
		\dim\mc D_{T_2}
		=
		\dim\mc D_{T_3}
		=
		3.
		\]
		Therefore,
		\[
		\dim(\mc D_{T_1}\oplus\mc D_{T_2}\oplus\mc D_{T_3})
		=
		3+3+3
		=
		9.
		\]
		Since
		\[
		\dim\mc D_T
		<
		\dim(\mc D_{T_1}\oplus\mc D_{T_2}\oplus\mc D_{T_3}),
		\]
		the isometry associated with $\sigma=\{1,2,3\}$
		\begin{align*}
			Z_{\sigma}:\mc D_T
			\longrightarrow
			\mc D_{T_1}\oplus\mc D_{T_2}\oplus\mc D_{T_3}
		\end{align*}
		cannot be surjective. Hence, the factorization $T=T_1T_2T_3$ fails to be $3$-regular, implying that $(T_1, T_2, T_3)$ is not a symmetric $3$-regular tuple.
	\end{example}
	
	\begin{example}[M. J. Crabb and A. M. Davie,\cite{CD75a}]
		Let
		\[
		\mc H
		=
		\operatorname{span}
		\{e,f_1,f_2,f_3,g_1,g_2,g_3,h\}
		\]
		with orthonormal basis
		\[
		\mc B
		=
		\{e,f_1,f_2,f_3,g_1,g_2,g_3,h\}.
		\]
		Define commuting contractions \(T_1,T_2,T_3\in B(\mc H)\) by
		\[
		T_i e=f_i,\qquad
		T_i f_i=-g_i,\qquad
		T_i f_j=g_k \quad (k\neq i,j),
		\]
		\[
		T_i g_j=\delta_{ij}h,
		\qquad
		T_i h=0,
		\]
		for \(i,j,k\in\{1,2,3\}\).
		A direct computation yields
		\[
		T=T_1T_2T_3
		=
		\begin{pmatrix}
			0&0&0&0&0&0&0&0\\
			0&0&0&0&0&0&0&0\\
			0&0&0&0&0&0&0&0\\
			0&0&0&0&0&0&0&0\\
			0&0&0&0&0&0&0&0\\
			0&0&0&0&0&0&0&0\\
			0&0&0&0&0&0&0&0\\
			1&0&0&0&0&0&0&0
		\end{pmatrix}.
		\]
		Hence
		\[
		T^*T
		=
		\operatorname{diag}(1,0,0,0,0,0,0,0),
		\]
		and therefore
		\[
		D_T
		=
		(I-T^*T)^{1/2}
		=
		\operatorname{diag}(0,1,1,1,1,1,1,1).
		\]
		Consequently,
		\[
		\dim\mc D_T=7.
		\]
		Further,
		\[
		D_{T_1}
		=
		\operatorname{diag}(0,0,0,0,0,1,1,1),
		\]
		\[
		D_{T_2}
		=
		\operatorname{diag}(0,0,0,0,1,0,1,1),
		\]
		\[
		D_{T_3}
		=
		\operatorname{diag}(0,0,0,0,1,1,0,1).
		\]
		Hence
		\[
		\dim\mc D_{T_1}
		=
		\dim\mc D_{T_2}
		=
		\dim\mc D_{T_3}
		=
		3.
		\]
		Therefore,
		\[
		\dim(\mc D_{T_1}\oplus\mc D_{T_2}\oplus\mc D_{T_3})
		=
		3+3+3
		=
		9.
		\]
		Since
		\[
		\dim\mc D_T
		=
		7
		<
		9
		=
		\dim(\mc D_{T_1}\oplus\mc D_{T_2}\oplus\mc D_{T_3}),
		\]
		the isometry associated with $\sigma=\{1,2,3\}$
		\begin{align*}
			Z_{\sigma}:\mc D_T
			\longrightarrow
			\mc D_{T_1}\oplus\mc D_{T_2}\oplus\mc D_{T_3}
		\end{align*}
		is not surjective. Therefore, the factorization
		\[
		T=T_1T_2T_3
		\]
		cannot be \(3\)-regular, which means $(T_1, T_2, T_3)$ is not a symmetric \(3\)-regular tuple.
	\end{example}
	
	The following example, due to \cite{Pa70a}, satisfies the von Neumann inequality but does not admit a commuting isometric dilation.
	\begin{example}[S. Parrott,\cite{Pa70a}]
		Let \(\mc H=\mc K\oplus\mc K\), where \(\mc K=\mathbb{C}^n\), and consider the commuting contractions
		\[
		T_1=
		\begin{pmatrix}
			0&0\\
			I&0
		\end{pmatrix},
		\qquad
		T_2=
		\begin{pmatrix}
			0&0\\
			U&0
		\end{pmatrix},
		\qquad
		T_3=
		\begin{pmatrix}
			0&0\\
			V&0
		\end{pmatrix},
		\]
		where \(U,V\in B(\mc K)\) are noncommuting unitary operators.
		Since
		\[
		T_2T_1=0,
		\]
		we obtain
		\[
		T=T_1T_2T_3=0.
		\]
		Hence
		\[
		D_T=(I-T^*T)^{1/2}=I_{\mc H},
		\qquad
		\mc D_T=\mc H\cong \mc K\oplus\mc K.
		\]
		Moreover, for each \(i=1,2,3\),
		\[
		T_i^*T_i=
		\begin{pmatrix}
			I_{\mc K}&0\\
			0&0
		\end{pmatrix},
		\]
		and therefore
		\[
		D_{T_i}
		=
		\begin{pmatrix}
			0&0\\
			0&I_{\mc K}
		\end{pmatrix}.
		\]
		Consequently,
		\[
		\mc D_{T_i}\cong \mc K,
		\qquad i=1,2,3.
		\]
		Thus
		\[
		\mc D_{T_1}\oplus\mc D_{T_2}\oplus\mc D_{T_3}
		\cong
		\mc K\oplus\mc K\oplus\mc K.
		\]
		Since
		\[
		\dim\mc D_T
		=
		2n
		<
		3n
		=
		\dim(\mc D_{T_1}\oplus\mc D_{T_2}\oplus\mc D_{T_3}),
		\]
		hencethe isometry associated with $\sigma=\{1,2,3\}$
		\begin{align*}
			Z_{\sigma}:\mc D_T
			\longrightarrow
			\mc D_{T_1}\oplus\mc D_{T_2}\oplus\mc D_{T_3}
		\end{align*}
		cannot be surjective. Hence, the factorization
		\[
		T=T_1T_2T_3
		\]
		is not \(3\)-regular, and consequently, $(T_1, T_2, T_3)$ is not a symmetric \(3\)-regular tuple.
	\end{example}
	The observation that these classical counterexamples fail to be symmetric $k$-regular ($k \geq 3$) motivates us to explore the interplay between the class $\mathcal{SR}_k(\mathcal H)$ and the classes $\mathcal{CID}_k(\mathcal H)$ and $\mathcal{VNI}_k(\mathcal H)$ for $k \geq 3$.\\

	\noindent\textbf{Acknowledgment.}
	The research of the first author is supported in part by the Indian Institute of Technology Goa (SEED Grant 2022/SG/KH/047) and the Anusandhan National Research Foundation (MATRICS Grant MTR/2022/000339). The second author is supported by a CSIR-SRF fellowship (File No. 09/1290(12920)/2021-EMR-I) from the Council of Scientific and Industrial Research (CSIR), India.


\begin{thebibliography}{99}
		
		\bibitem{An63a}
		T. And\^o,
		\textit{On a pair of commutative contractions},
		Acta Sci. Math. (Szeged), \textbf{24} (1963), 88--90.
		
		
		
		\bibitem{BS20a}
		J. A. Ball and H. Sau, 
		\textit{Functional Models for Commuting Hilbert-Space Contractions}, 
		Oper. Theory Adv. Appl., \textbf{278} (2020), 11--54.
		
		\bibitem{Ba23a}
		J. A. Ball and H. Sau, 
		\textit{Dilation and Model Theory for Pairs of Commuting Contractions}, 
		\url{https://arxiv.org/abs/2308.07589}, (2023).
		
		\bibitem{Bh05a}
		T. Bhattacharyya, J. Eschmeier and J. Sarkar,
		\textit{Characteristic function of a pure commuting contractive tuple},
		Integral Equations Operator Theory, \textbf{53} (2005), 23--32.
		
		\bibitem{Bh06a}
		T. Bhattacharyya, J. Eschmeier and J. Sarkar,
		\textit{On CNC commuting contractive tuples},
		Proc. Indian Acad. Sci. Math. Sci., \textbf{116} (2006), 299--316.
		
		\bibitem{CD75a}
		M. J. Crabb and A. M. Davie,
		\textit{von Neumann's inequality for Hilbert space operators},
		Bull. London Math. Soc., \textbf{7} (1975), 49--50.
		
		
		\bibitem{Fr82a}
		A. E. Frazho,
		\textit{Models for noncommuting operators},
		J. Funct. Anal., \textbf{48} (1982), 1--11.
		
		\bibitem{HM2026}
		K. J. Haria and A. K. Maurya,
		\textit{Regular factorizations of characteristic functions of contractions in triangular form},
		preprint.
		
		\bibitem{HM2026II}
		K. J. Haria and A. K. Maurya,
		\textit{$k$-Regular factorizations and joint invariant subspaces of completely non-coisometric row contractions}, \url{https://doi.org/10.13140/RG.2.2.35992.15363}, 	preprint.
		
		\bibitem{Kerchy03}
		L. K\'erchy,
		\textit{On the factorization of operator-valued functions},
		Acta Sci. Math. (Szeged), \textbf{69} (2003), 337--348.
		
		\bibitem{Khan90a}
		D. K. Khan,
		\textit{Factorization of transfer functions. I. (+)-Regular factorization},
		Ukrainian Math. J., \textbf{42} (1990), 279--282.
		
		\bibitem{Khan90b}
		D. K. Khan,
		\textit{Factorization of transfer functions. II. The minimality of passive scattering systems under a step-by-step combination},
		Ukrainian Math. J., \textbf{42} (1990), 432--437.
		
		\bibitem{Do94}
		D. C. Khan,
		\textit{($\pm$)-Regular factorization of transfer function and passive scattering system for cascade coupling},
		J. Operator Theory, (1994).
		
		\bibitem{Pa70a}
		S. Parrott,
		\textit{Unitary dilations for commuting contractions},
		Pacific J. Math., \textbf{34} (1970), 481--490.
		
		\bibitem{Po89a}
		G. Popescu,
		\textit{Isometric dilations for infinite sequences of noncommuting operators},
		Trans. Amer. Math. Soc., \textbf{316} (1989), 523--536.
		
		\bibitem{Po89b}
		G. Popescu,
		\textit{Characteristic functions for infinite sequences of noncommuting operators},
		J. Operator Theory, \textbf{22} (1989), 51--71.
		
		\bibitem{Po95a}
		G. Popescu,
		\textit{Multi-analytic operators on Fock spaces},
		Math. Ann., \textbf{303} (1995), 31--46.
		
		\bibitem{Po06}
		G. Popescu,
		\textit{Characteristic functions and joint invariant subspaces},
		J. Funct. Anal., \textbf{237} (2006), 277--320.
		
		\bibitem{Sa18a}
		H. Sau, 
		\textit{And\^o dilations for a pair of commuting contractions: two explicit constructions and functional models}, 
		arXiv:1710.11368, 2018.
		
		
		\bibitem{Sz53a}
		B. Sz.-Nagy,
		\textit{Sur les contractions de l'espace de Hilbert},
		Acta Sci. Math. (Szeged), \textbf{15} (1953), 87--92.
		
		\bibitem{NF64a}
		B. Sz.-Nagy and C. Foia\c{s},
		\textit{Une caractérisation de sous-espaces invariants pour une contraction de l’espace de Hilbert},
		C. R. Math. Acad. Sci. Paris, \textbf{258} (1964), 3426--3429.
		
		\bibitem{NF64b}
		B. Sz.-Nagy and C. Foia\c{s},
		\textit{Sur les contractions de l’espace de Hilbert. IX. Factorisations de la fonction caractéristique. Sous-espaces invariants},
		Acta Sci. Math. (Szeged), \textbf{25} (1964), 283--316.
		
		\bibitem{NF67}
		B. Sz.-Nagy and C. Foia\c{s},
		\textit{Forme triangulaire d'une contraction et factorisation de la fonction caract\'eristique},
		Acta Sci. Math. (Szeged), \textbf{28} (1967), 201--212.
		
		\bibitem{NF70}
		B. Sz.-Nagy and C. Foia\c{s},
		\textit{Harmonic Analysis of Operators on Hilbert Space},
		North-Holland--Akad\'emiai Kiad\'o, Amsterdam--Budapest, 1970.
		
		\bibitem{NF74}
		B. Sz.-Nagy and C. Foia\c{s},
		\textit{Regular factorizations of contractions},
		Proc. Amer. Math. Soc., \textbf{43} (1974), 91--93.
		
		\bibitem{NFBK10}
		B. Sz.-Nagy, C. Foia\c{s}, H. Bercovici and L. K{\'e}rchy,
		\textit{Harmonic analysis of operators on {H}ilbert space},
		2nd ed., Universitext, Springer, New York, 2010.
		
		
		\bibitem{Te75a}
		R. I. Teodorescu,
		\textit{Sur les d\'ecompositions directes des contractions de l'espace de {H}ilbert},
		J. Funct. Anal., \textbf{18} (1975), 414--428.
		
		\bibitem{Te76a}
		R. I. Teodorescu,
		\textit{Fonctions caract\'eristiques constantes},
		Acta Sci. Math. (Szeged), \textbf{38} (1976), 183--185.
		
		\bibitem{Te77a}
		R. I. Teodorescu,
		\textit{The direct decompositions of contractions},
		Stud. Cerc. Mat., \textbf{29} (1977), 57--84.
		
		
		\bibitem{Te78b}
		R. I. Teodorescu,
		\textit{Factorisations r\'eguli\`eres et sous-espaces hyperinvariants},
		Acta Sci. Math. (Szeged), \textbf{40} (1978), 389--396.
		
		\bibitem{Te79a}
		R. I. Teodorescu,
		\textit{Sur l'unicit\'e{} de la d\'ecomposition des contractions en somme directe},
		J. Funct. Anal., \textbf{31} (1979), 245--254.
		
		\bibitem{Te80a}
		R. I. Teodorescu,
		\textit{Factorisations r\'eguli\`eres et sous-espaces invariants},
		Acta Sci. Math. (Szeged), \textbf{42} (1980), 325--330.
		
		\bibitem{Tim20}
		D. Timotin,
		\textit{The invariant subspaces of $S \oplus S^*$},
		Concrete Operators, \textbf{7} (2020), 116--123.
		
		\bibitem{Va74a}
		N. Th. Varopoulos,
		\textit{On an inequality of von Neumann and an application of the metric theory of tensor products to operators theory},
		J. Funct. Anal., \textbf{16} (1974), 83--100.
		
		
		
		\bibitem{Wu78a}
		P. Y. Wu,
		\textit{Hyperinvariant subspaces of the direct sum of certain contractions},
		Indiana Univ. Math. J., \textbf{27} (1978), 267--274.
		
		\bibitem{Wu79b}
		P. Y. Wu,
		\textit{Hyperinvariant subspaces of weak contractions},
		Acta Sci. Math. (Szeged), \textbf{41} (1979), 259--266.
		
		\bibitem{Wu79c}
		P. Y. Wu,
		\textit{Hyperinvariant subspaces of {$C_{11}$} contractions},
		Proc. Amer. Math. Soc., \textbf{75} (1979), 53--58.
		
		\bibitem{Wu78b}
		P. Y. Wu,
		\textit{Hyperinvariant subspaces of {$C_{11}$} contractions. II},
		Indiana Univ. Math. J., \textbf{27} (1978), 805--812.
		
		
		\bibitem{Wu79a}
		P. Y. Wu,
		\textit{Conditions for completely nonunitary contractions to be spectral},
		J. Funct. Anal., \textbf{31} (1979), 1--12.
		
	\end{thebibliography}
\end{document}